\DeclareMathOperator{\sgn}{sgn}
\DeclareMathOperator{\im}{im}
\newtheorem{Theorem}{Theorem}[section]
\newtheorem{Corollary}[Theorem]{Corollary}
\def\subsection{\@startsection{subsection}{2}%
  \z@{.7\linespacing\@plus\linespacing}{.5\linespacing}%
  {\normalfont\itshape}} 
\begin{document}

    \begin{abstract}
        Classical Graph Signal Processing (GSP) provides a robust framework for analyzing signals on irregular domains, utilizing the graph Fourier transform as a cornerstone for spectral analysis and filtering. However, as data structures grow in complexity, there is an increasing need to handle multi-dimensional information. In this paper, we propose a generalization of the GSP framework by introducing vector-valued graph signals which take values in arbitrary Banach spaces. We define and investigate the fundamental operators of vertex-frequency analysis within this broader setting, including the Fourier transform, convolution, and translation operators. A key contribution of this work is the derivation of operator norm estimates and the establishment of graph-theoretic versions of classical uncertainty principles. We demonstrate how these results depend on the choice of the orthonormal basis and on the underlying $L^p$ norms. By modeling multiple scalar signals as a single vector-valued entity, this framework facilitates the study of inter-signal correlations, providing a flexible and mathematically grounded environment for analyzing multivariate time-series and time-varying signals on complex networks.
    \end{abstract}

    \title{Fourier transform of vector-valued graph signals}
    \author{Antonio Caputo}
    \address{Università di Torino, Dipartimento di Matematica, via Carlo Alberto 10, 10123 Torino, Italy}
    \email{antonio.caputo@unito.it}
    \keywords{Graph Signal Processing, Graph Fourier Transform, Vector-valued graph signals, Banach spaces, Hilbert spaces}
    \maketitle
    

    \section{Introduction}
    In the last decades, graph theory has undergone significant development from both theoretical and practical points of view, spanning different branches of Mathematics, such as Algebra, Geometry, Analysis and Numerical Analysis. It has now become a fundamental branch of Mathematics that models relationships between objects, making it an essential tool for quantifying and simplifying the complex systems of our interconnected world. By representing data as nodes (vertices) and their connections as edges, it provides a universal framework for solving critical problems in computer science, in logistics, in social sciences, and in biology \cite{Bulai_1}.

    Graph theory from an algebraic point of view can be approached using techniques from linear algebra and matrix theory \cite{Biggs, Chung, Wilson}. Indeed, the study of the eigenvalues of graph matrices, such as the adjacency matrix and the Laplacian matrix, reveals some properties of the connectivity of the graph. For example, the second smallest eigenvalue of the Laplacian matrix measures how robustly a network is connected and how easily information can flow through it.

    Throughout this paper, we will work with a finite graph $G=(V,E)$, with \textit{vertex set} $V=\lbrace1,\dots,N\rbrace$ and \textit{edge set} $E$. We assume that $G$ has no loops and that no two distinct edges are incident with the same vertices. The \textit{adjacency matrix} of $G$ is the $N\times N$ matrix $A=(a_{ij})$ defined by
    \begin{equation*}
        a_{ij}=
        \begin{cases}
            1 & \text{if } (i,j)\in E, \\
            0 & \text{if } (i,j)\notin E.
        \end{cases}
    \end{equation*}

    The graph $G$ is said to be \textit{undirected} if $A$ is a symmetric matrix, i.e., if $a_{ij}=a_{ji}$ for all $i,j\in V$.

    The \textit{degree matrix} of $G$ is the diagonal $N\times N$ matrix $D$ whose $i$-th element is the number of vertices of $G$ which are adjacent to $i$. The \textit{Laplacian matrix} of $G$ is the $N\times N$ matrix $L$ defined as $L=D-A$, so that
    \begin{equation*}
        L_{ij}=
        \begin{cases}
            d_i & \text{if } i=j, \\
            -1 & \text{if } (i,j)\in E, \\
            0 & \text{otherwise}.
        \end{cases}
    \end{equation*}

    Finally, the \textit{normalized Laplacian matrix} of $G$ is the $N\times N$ matrix $\mathcal{L}$ defined as $\mathcal{L}=D^{-1/2}LD^{-1/2}$, so that
    \begin{equation*}
        \mathcal{L}_{ij}=
        \begin{cases}
            1 & \text{if } i=j, \\
            \frac{-1}{\sqrt{d_id_j}} & \text{if } (i,j)\in E, \\
            0 & \text{otherwise}.
        \end{cases}
    \end{equation*}

    Notice that $\mathcal{L}$ can be seen as an operator on the space of all functions $f:V\to\mathbb{R}$ satisfying the following relation:
    \begin{equation*}
        \mathcal{L}f(u)=\frac{1}{\sqrt{d_u}}\sum_{v\sim u}\left(\frac{f(u)}{\sqrt{d_u}}-\frac{f(v)}{\sqrt{d_v}}\right),
    \end{equation*}
    where $v\sim u$ means that $(u,v)\in E$, i.e., $u$ and $v$ are adjacent. It is also easy to see that $\mathcal{L}$ satisfies the following equality:
    \begin{equation*}
        \mathcal{L}=I-D^{-1/2}AD^{-1/2},
    \end{equation*}
    where $I$ is the identity matrix of order $N$.

    The set of all functions $f:V\to\mathbb{R}$ is the set of \textit{(real) graph signals} on $G$.

    For example, the \textit{complete graph} is a graph in which any two distinct vertices are adjacent. So, the adjacency matrix $A$ and the Laplacian matrix $L$ of a complete graph with $N=4$ vertices are the following:
    \begin{equation*}
        A=
        \begin{pmatrix}
            0 & 1 & 1 & 1 \\
            1 & 0 & 1 & 1 \\
            1 & 1 & 0 & 1 \\
            1 & 1 & 1 & 0
        \end{pmatrix}, \qquad L=
        \begin{pmatrix}
            3 & -1 & -1 & -1 \\
            -1 & 3 & -1 & -1 \\
            -1 & -1 & 3 & -1 \\
            -1 & -1 & -1 & 3
        \end{pmatrix}.
    \end{equation*}

    In Figure \ref{Examples of graphs} there are some examples of undirected finite graphs.
    \begin{figure}[ht]
        \centering
        \includegraphics[width=0.45\linewidth]{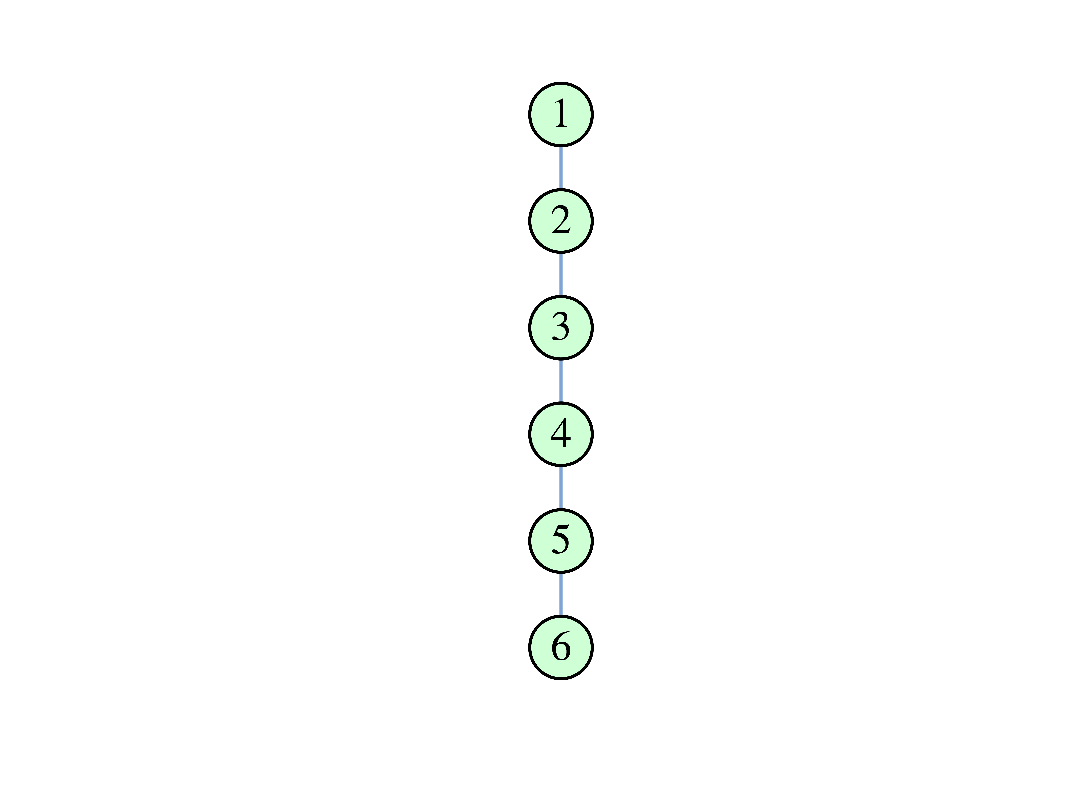}
        \includegraphics[width=0.45\linewidth]{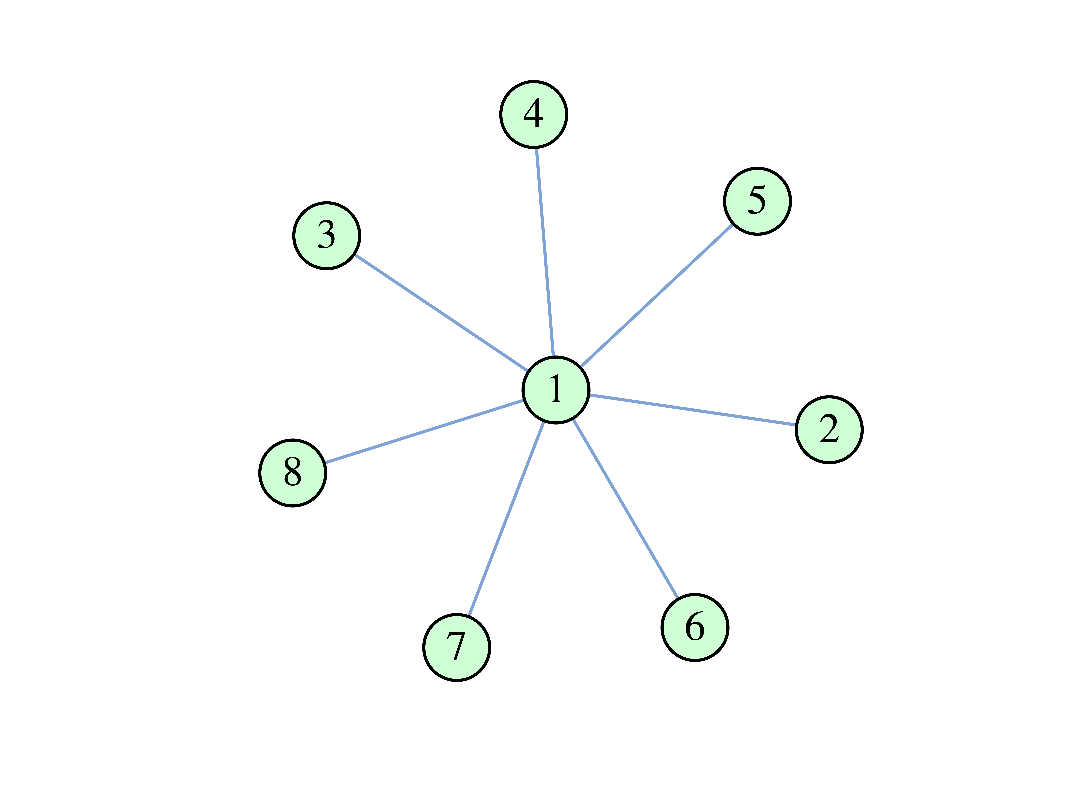} \\
        \includegraphics[width=0.45\linewidth]{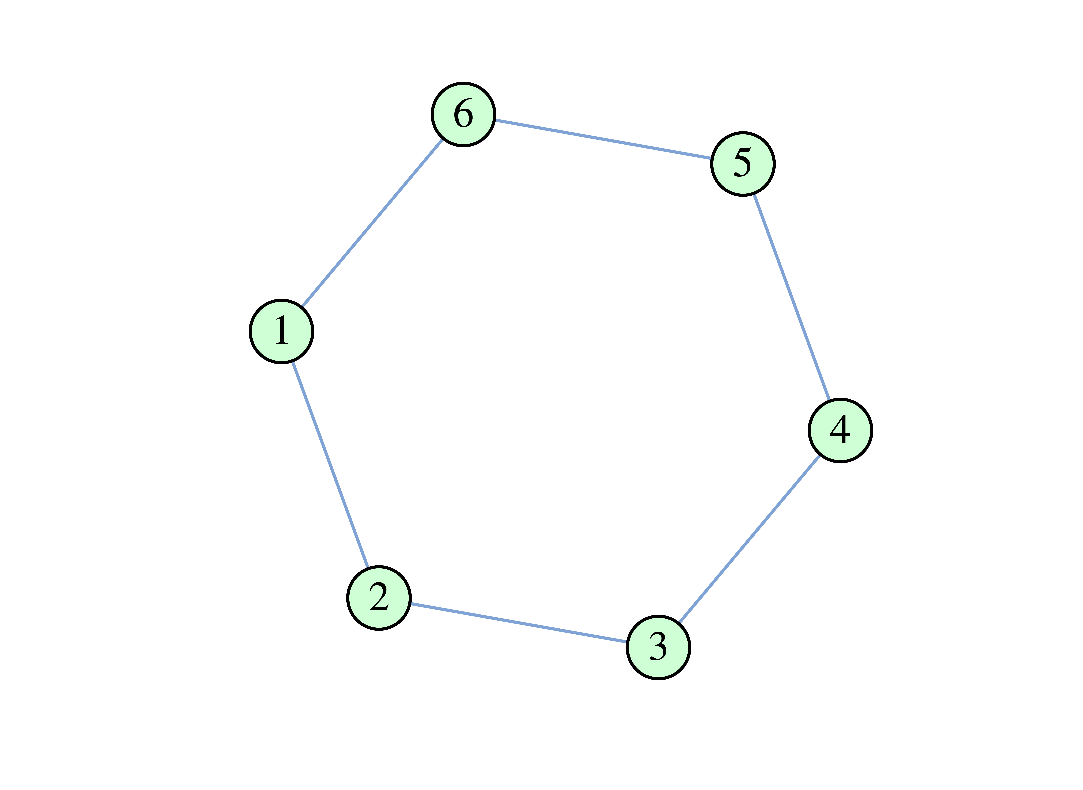}
        \includegraphics[width=0.45\linewidth]{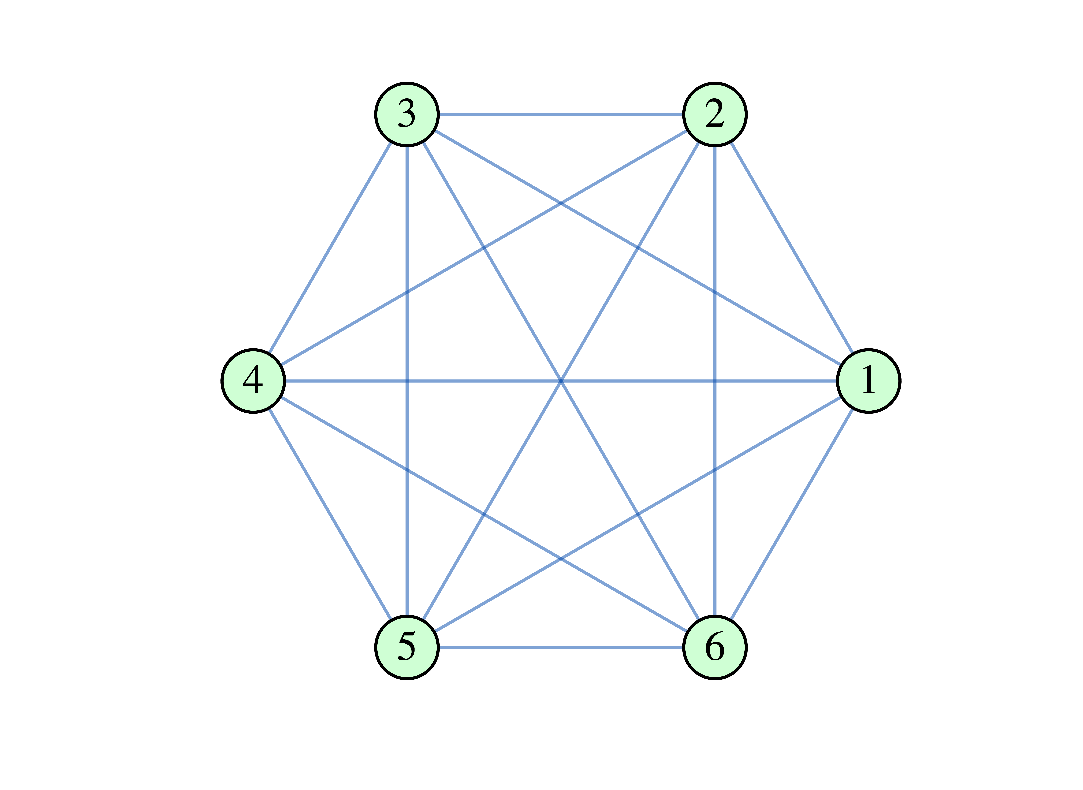}
        \caption{Path graph, star graph, circular graph, complete graph}
        \label{Examples of graphs}
    \end{figure}

    If the adjacency matrix $A$ is a symmetric matrix, the Laplacian matrix $L$ and the normalized Laplacian matrix $\mathcal{L}$ are also symmetric matrices. So, by the spectral theorem, they are diagonalizable by an orthogonal matrix $U$, and their eigenvalues are all real. Moreover, the normalized Laplacian matrix $\mathcal{L}$ is a positive-semidefinite matrix, so its eigenvalues are non-negative and can be ordered as $0\leq\lambda_0\leq\cdots\leq\lambda_{N-1}$. It is also easy to see that $\lambda_0=0$, as $\mathcal{L}$ is singular, and that $\lambda_{N-1}\leq2$. Hence, the spectrum of the matrix $\mathcal{L}$ is localized in the interval $[0,2]$. If $\Lambda$ is the diagonal matrix whose $i$-th element is the $i$-th eigenvalue of the matrix $F\in\lbrace A,L,\mathcal{L}\rbrace$, then we can write its spectral decomposition as the following:
    \begin{equation*}
        F=U\Lambda U^*,
    \end{equation*}
    where $U^*$ denotes the (conjugate) transpose of $U$.
    
    Inspired by the success of classical signal processing, which operates on regular, ordered data (like time-series or grids), Shuman et al. \cite{Shuman_1} defined the classical time-frequency operators, such as convolution, translation, and modulation operators, for signals on graphs, moving towards graph signal processing, which extends these concepts to irregular, non-Euclidean structures represented by graphs. These operators can be adapted to the irregular graph setting via the graph Fourier transform, introduced by Hammond et al. \cite{Hammond} as the expansion of a signal $f:V\to\mathbb{R}$ in terms of the orthonormal basis of eigenvectors of the normalized Laplacian matrix $\mathcal{L}$. In particular, if $\mathcal{B}=\lbrace u_1,\dots,u_N\rbrace\subseteq\mathbb{R}^N$ is such a basis, then the \textit{graph Fourier transform} and the \textit{inverse graph Fourier transform} of a signal $f:V\to\mathbb{R}$ are the signals $\hat{f}:V\to\mathbb{R}$ and $\check{f}:V\to\mathbb{R}$ given by
	\begin{equation*}
		\hat{f}(k)=\sum_{n=1}^{N}u_k(n)f(n), \qquad \check{f}(n)=\sum_{k=1}^{N}u_k(n)f(k).
	\end{equation*}

    It is straightforward to verify that the inverse graph Fourier transform is indeed the inverse operator of the graph Fourier transform. The convolution operator can be defined in the graph spectral domain as
    \begin{equation*}
        (f*g)(n)=\sum_{k=1}^{N}u_k(n)\hat{f}(k)\hat{g}(k).
    \end{equation*}

    In an analogous way, we can define the translation operator $T_m$ of vertex $m\in V$ as the convolution with a delta signal centred at $m$:
    \begin{equation*}
        T_mf(n)=(\delta_m*f)(n)=\sum_{k=1}^{N}u_k(n)u_k(m)\hat{f}(k).
    \end{equation*}

    Many basic properties of the convolution operator still hold in the graph setting. For example, the graph Fourier transform turns the convolution operator of two signals into the pointwise product of their Fourier transforms. Analogously, there are some nice properties of the translation operator which still hold in the graph setting, and some other properties which do not hold. For example, the operator $T_m$ is not always an isometry with respect to the $2$-norms, and this reflects the lack of a sum operation on the vertex set of $G$.

    Based on the previous definitions, Hammond et al. \cite{Hammond} also defined the spectral graph wavelet transform, generated by wavelet operators that are operator-valued functions of the Laplacian, generalizing the classical wavelet transform to the graph setting. Inspired by this approach, Bulai et al. \cite{Bulai_1} generalized the work in \cite{Hammond} and introduced the definition of spectral graph wavelet packets transform in the graph context, extending the classical framework. Moreover, many other classical time-frequency operators have been extended to the graph setting, such as the windowed graph Fourier transform \cite{Shuman_3}, and the multi-windowed graph Fourier transform \cite{Bulai_2}, allowing multiple analysis and synthesis windows. Finally, a foundational overview of Graph Signal Processing can be found in \cite{Stankovic}.

    In a more general way, Sandryhaila et al. \cite{Sandryhaila} defined the graph Fourier transform on the graph $G$ as the expansion of a signal $f$ in terms of the basis of generalized eigenvectors of the adjacency matrix $A$, via its Jordan normal form decomposition, allowing the study of not undirected graphs and of complex graph signals $f:V\to\mathbb{C}$. In this setting, the matrix $A$ is treated as the graph shift operator. In fact, just as a time-delay operator shifts a signal in time, multiplying a signal by the matrix $A$ shifts information to neighbouring nodes.

    If $A$ is not assumed to be a symmetric matrix, as in \cite{Sandryhaila}, diagonalizability is no longer guaranteed, necessitating the use of the Jordan normal form decomposition. To ensure diagonalizability while preserving the energy-saving properties of a unitary transform, the standard condition required is normality. A matrix $A$ is normal if it commutes with its conjugate transpose, i.e., $A^*A=AA^*$, a property defining what is known as a \textit{normal graph}. By the complex spectral theorem, normality is the necessary and sufficient condition for $A$ to be unitarily diagonalizable. Notably, as a normal matrix may have complex eigenvalues, the resulting orthonormal basis of eigenvectors resides in $\mathbb{C}^N$. The fundamental properties of the spectrum of a normal graph are discussed in \cite{Torgasev}.

    In several applications, the choice of the right basis of eigenvectors is crucial, since one of them could be more appropriate than the others in terms of computation or sharpness of the resulting estimates. Hence, Ghandehari et al. \cite{Ghandehari_2, Ghandehari_1}, and Ji et al. \cite{Ji} considered the graph Fourier transform induced by an arbitrary orthonormal basis of $\mathbb{C}^N$. In \cite{Ji}, the set of graph signals is also broadened to include functions taking values into an arbitrary (separable) Hilbert space, showing that the graph Fourier transform can be defined in this setting, also.

    In this work, we follow the approach of considering a general orthonormal basis of $\mathbb{C}^N$, which subsumes the cases of orthonormal eigenbases derived from matrices such as $A$, $L$ or $\mathcal{L}$, and we consider the more general setting of graph signals taking values into an arbitrary Banach space. So, let $\mathbb{K}$ denote either the real or complex field, and let $X$ be a Banach space over $\mathbb{K}$. We consider $X$-valued functions defined on the vertex set $V$ of a finite graph $G=(V,E)$, and we define an \textit{$X$-valued signal} on $G$ as a function $f:V\to X$.

    Since $V$ is finite, the space of $X$-valued signals on $G$ is isomorphic to $X^N$ via the canonical isomorphism
    \begin{equation*}
        \varphi:\lbrace f:V\to X\rbrace\to X^N, \quad f\mapsto\varphi(f):=(f(1),\dots,f(N)).
    \end{equation*}
    
    This identification allows us to treat signals either as functions or elements of the product space $X^N$, so that each component $f(n)$ is an element of $X$. For example, if $X=\mathbb{R}^d$, then $f(n)$ is a $d$-dimensional real vector for all $n\in\lbrace1,\dots,N\rbrace$.

    In this general framework, we establish a solid functional-analytic foundation for the study of vector-valued signals on graphs. Our main contributions include:
    \begin{itemize}
        \item A rigorous formulation of the space of vector-valued signals as standard $L^p$ Bochner spaces and the associated norms.
        \item A detailed analysis of the graph Fourier transform and of translation, modulation, and convolution operators in the context of $X^N$, extending classical graph signal processing tools to vector-valued signals.
        \item The development of new results on operator norms, continuity, and boundedness in the $L^p$ setting, including generalized versions of the Fourier transform on graphs.
    \end{itemize}
    These results pave the way for applying time-frequency techniques and Gabor frame theory to structured graph data, particularly when the data are naturally multidimensional or functional.
    
    The paper is organized as follows. In Section 2, we introduce the graph Fourier transform for vector-valued graph signals, with respect to an arbitrary orthonormal basis of $\mathbb{K}^N$, generalizing the spectral definitions established in \cite{Hammond, Sandryhaila, Shuman_1, Stankovic}, which rely on specific eigenbases of the Laplacian or adjacency matrices, and follows the broader functional-analytic framework proposed in \cite{Ghandehari_2, Ghandehari_1, Ji}. Then, we provide examples of multidimensional and functional signals, and prove estimates involving the operator norm of $\mathcal{F}$. In Section 3, we discuss some simple forms of the uncertainty principle in the graph setting. In Section 4, we define the convolution operator between a scalar signal and a vector-valued signal $f$ via the graph Fourier transform, and study some basic properties, such as Young's inequality on graphs. In Section 5, we define the translation operator for graph signals via convolution and exhibit the condition under which the translation is an isometry. In Section 6, we conclude with some applications and numerical simulations, showing how the estimates found in Section 2 change under a change of basis.

    \section{Operator norms of the graph Fourier transform}
    In this section, we introduce the graph Fourier transform for vector-valued signals and derive some important estimates of its operator norm, as an operator between the Banach spaces $L^p$ and $L^q$, with an arbitrary choice of $p,q\in[1,\infty]$. Moreover, we establish the condition under which a Banach space has hilbertian structure via Plancherel's equality in the graph setting.
    
    \subsection{Preliminaries}
    Let $\mathbb{K}$ denote either the real or complex field, and consider a fixed orthonormal basis $\mathcal{B}=\lbrace u_1,\dots,u_N\rbrace$ of $\mathbb{K}^N$. Throughout this paper, we assume $\mathcal{B}$ to be fixed and do not address the problem of selecting an optimal orthonormal basis for a specific application. Our goal is to establish properties of the graph Fourier transform that remain valid independent of the choice of the fixed basis. As a motivated example, $\mathcal{B}$ is often taken to be the eigenbasis of the adjacency matrix of a normal graph.

    We may write a signal $f$ either as $(f_1,\dots,f_N)$ or as $(f(1),\dots,f(N))$, depending on the context. Recall that the space $X^N$ is itself a Banach space when equipped with any of the standard $L^p$ norms:
    \begin{equation*}
        \|f\|_p:=\left(\sum_{n=1}^{N}\|f(n)\|^p\right)^{1/p},
    \end{equation*}
    for $p\in[1,\infty)$, and
    \begin{equation*}
        \|f\|_\infty:=\max_{1\leq n\leq N}\|f(n)\|.
    \end{equation*}

    If we endow $V$ with the discrete $\sigma$-algebra and the counting measure $\mu$, we obtain a measure space. Any $X$-valued signal is then a simple, and so strongly measurable, function. Thus, for all $p\in[1,\infty]$, we can also write $X^N=L^p(V,X)$ as the standard Bochner space. Integration with respect to the counting measure takes the form
    \begin{equation*}
        \int_Vfd\mu=\sum_{n=1}^{N}f(n),
    \end{equation*}
    or, using the notation $dn$ for $d\mu$,
    \begin{equation*}
        \int_Vf(n)dn=\sum_{n=1}^{N}f(n), \quad \text{and} \quad \int_Sf(n)dn=\sum_{n\in S}f(n)
    \end{equation*}
    for any $S\subseteq V$.

    Furthermore, since $V$ is finite, all $L^p$ norms are equivalent, as Hölder’s inequality yields
    \begin{equation*}
        \|f\|_q\leq\|f\|_p\leq N^{\frac{1}{p}-\frac{1}{q}}\|f\|_q,
    \end{equation*}
    for all $f:V\to X$, and $1\leq p<q\leq\infty$. Furthermore, both inequalities are sharp, since the first inequality is reached for $f=(x_0,0,\dots,0)$, and the second inequality is reached for $f=(x_0,\dots,x_0)$, with $x_0\in X$ such that $\|x_0\|=1$.

    If $X$ is a Hilbert space, then $X^N$ is a Hilbert space with the inner product
    \begin{equation*}
        \langle f,g\rangle:=\sum_{n=1}^{N}\langle f(n),g(n)\rangle,
    \end{equation*}
    and the induced norm is the $L^2$ norm.
    
    Let $U$ be the unitary matrix whose columns are the basis vectors $u_1,\dots,u_N$, and satisfying $UU^*=I=U^*U$ (where $U$ is an orthogonal matrix if $\mathbb{K}=\mathbb{R}$). We write its generic element as $u_k(n)$, so that $k$ is the column index, and $n$ is the row index. The $k$-th column and the $n$-th row of $U$ are the vectors $u_k$ and $u(n)$, i.e.,
    \begin{equation*}
        u_k=(u_k(1),\dots,u_k(N)), \quad u(n)=(u_1(n),\dots,u_N(n)).
    \end{equation*}
    Consequently, we have the following useful orthonormality relations:
    \begin{equation*}
        \langle u_h,u_k\rangle=\sum_{n=1}^{N}u_h(n)\overline{u_k(n)}=\delta_{hk}, \quad \langle u(n),u(m)\rangle=\sum_{k=1}^{N}u_k(n)\overline{u_k(m)}=\delta_{nm}.
    \end{equation*}
    
    An important parameter associated with the basis $\mathcal{B}$ is the \textit{mutual coherence} (or \textit{coherence}) of $\mathcal{B}$ (\cite{Donoho, Shuman_2, Shuman_1}), defined as the entry-wise $\ell^\infty$-norm of the matrix $U$:
    \begin{equation*}
        \kappa(U):=\|U\|_\infty=\max_{1\leq k,n\leq N}|u_k(n)|.
    \end{equation*}
    More generally, if $p\in[1,\infty]$, we define the \textit{$p$-mutual coherence} (or \textit{$p$-coherence}) of $\mathcal{B}$ as the $\ell^{p,\infty}$-norm of the matrix $U$, i.e.,
    \begin{equation*}
        \kappa_p(U):=\|U\|_{p,\infty}=\max_{1\leq k\leq N}\|u_k\|_p=\max_{1\leq k\leq N}\left(\sum_{n=1}^{N}|u_k(n)|^p\right)^{1/p}.
    \end{equation*}
    This corresponds to the standard norm of the mixed norm space $\ell^{p,\infty}(V\times V)$, which is a Banach space, as discussed in the work of \cite{Benedek}. In particular, note that
    \begin{equation*}
        \kappa_{\infty}(U)=\kappa(U).
    \end{equation*}
    This value measures the spread of the vectors in $\mathcal{B}$ and is a crucial value to consider for estimates involving the operator norm of the graph Fourier operator. Moreover, since $\mathcal{B}$ is an orthonormal basis, $\kappa_p(U)$ satisfies the following bounds:
    \begin{equation*}
        1\leq\kappa_p(U)\leq N^{\frac{1}{p}-\frac{1}{2}} \quad \text{for } 1\leq p<2, \quad \text{and} \quad N^{\frac{1}{p}-\frac{1}{2}}\leq\kappa_p(U)\leq1 \quad \text{for } p>2.
    \end{equation*}
    Indeed, for $1\leq p<2$, by Hölder’s inequality, for all $k\in\lbrace1,\dots,N\rbrace$,
    \begin{equation*}
        1=\|u_k\|_2\leq\|u_k\|_p\leq N^{\frac{1}{p}-\frac{1}{2}}\|u_k\|_2=N^{\frac{1}{p}-\frac{1}{2}}.
    \end{equation*}
    Hence, we get the desired inequality. Analogously, for $p>2$, by Hölder’s inequality, for all $k\in\lbrace1,\dots,N\rbrace$,
    \begin{equation*}
        N^{\frac{1}{p}-\frac{1}{2}}=N^{\frac{1}{p}-\frac{1}{2}}\|u_k\|_2\leq\|u_k\|_p\leq\|u_k\|_2=1.
    \end{equation*}
    Hence, we get the desired inequality. Note that all these inequalities are sharp. The equality with the coefficient $1$ is attained by the identity matrix $I$, and the equality with the coefficient $N^{1/p-1/2}$ is attained by any matrix $U$ such that $|u_k(n)|=N^{-1/2}$ for all $k,n\in\lbrace1,\dots,N\rbrace$. In particular, we have the following:
    \begin{equation*}
        N^{-1/2}\leq\kappa(U)\leq1, \quad \text{and} \quad \kappa_2(U)=1.
    \end{equation*}
    Following \cite{Benedek}, the general $\ell^{p,q}$-norm of the matrix $U$ is given by
    \begin{equation*}
        \|U\|_{p,q}:=\left(\sum_{k=1}^{N}\left(\sum_{n=1}^{N}|u_k(n)|^p\right)^{q/p}\right)^{1/q},
    \end{equation*}
    with obvious modifications when either $p=\infty$ or $q=\infty$.

    We denote the \textit{unitary group} of order $N$ by $\mathrm{U}(N,\mathbb{C})$.

    \subsection{The graph Fourier transform}
    The \textit{graph Fourier transform} and the \textit{inverse graph Fourier transform} of a vector-valued signal $f\in X^N$ are the signals $\hat{f}\in X^N$ and $\check{f}\in X^N$, defined as
	\begin{equation*}
		\hat{f}(k)=\sum_{n=1}^{N}\overline{u_k(n)}f(n), \quad \check{f}(n)=\sum_{k=1}^{N}u_k(n)f(k).
	\end{equation*}
	We may write the graph Fourier transform and the inverse graph Fourier transform as the following linear operators:
	\begin{equation*}
		\mathcal{F}:X^N\to X^N,f\mapsto\mathcal{F}f:=\hat{f}, \quad \text{and} \quad \mathcal{F}^{-1}:X^N\to X^N,f\mapsto\mathcal{F}^{-1}f:=\check{f}.
	\end{equation*}
    In matrix form, these transforms are expressed as $\hat{f}=U^{*}f$ and $\check{f}=Uf$. Note that $\mathcal{F}^{-1}$ is actually the inverse of $\mathcal{F}$. Indeed, by the orthonormality relations,
	\begin{align*}
		\hat{\check{f}}(k)&=\sum_{n=1}^{N}\overline{u_k(n)}\check{f}(n)=\sum_{n=1}^{N}\sum_{h=1}^{N}\overline{u_k(n)}u_h(n)f(h)=\sum_{h=1}^{N}\sum_{n=1}^{N}u_h(n)\overline{u_k(n)}f(h) \\
        &=\sum_{h=1}^{N}\delta_{hk}f(h)=f(k),
	\end{align*}
	and
	\begin{align*}
		\check{\hat{f}}(n)&=\sum_{k=1}^{N}u_k(n)\hat{f}(k)=\sum_{k=1}^{N}\sum_{m=1}^{N}u_k(n)\overline{u_k(m)}f(m)=\sum_{m=1}^{N}\sum_{k=1}^{N}u_k(n)\overline{u_k(m)}f(m) \\
        &=\sum_{m=1}^{N}\delta_{nm}f(m)=f(n).
	\end{align*}
	As a consequence, we have the \textit{inversion formula} for the graph Fourier transform:
	\begin{equation*}
		f(n)=\sum_{k=1}^{N}u_k(n)\hat{f}(k).
	\end{equation*}
	Via the integral formulation introduced, we may write
	\begin{equation*}
		\hat{f}(k)=\int_{V}\overline{u_k(n)}f(n)dn, \quad \text{and} \quad f(n)=\int_{V}u_k(n)\hat{f}(k)dk.
	\end{equation*}

    The definition of graph Fourier transform on an arbitrary (normal) graph derives from the analogous operator defined on the circular graph, with vertex set $V=\mathbb{Z}_N$ and edge set $E=\lbrace([k]_N,[k+1]_N) \mid k\in\mathbb{Z}\rbrace$. So, the adjacency matrix of $G=(V,E)$ is given by
    \begin{equation*}
        a_{hk}=
        \begin{cases}
            1 & \text{if } [k]_N=[h+1]_N, \\
            0 & \text{if } [k]_N\neq[h+1]_N.
        \end{cases}
    \end{equation*}
    For example, if $N=4$, then we have
    \begin{equation*}
        A=
        \begin{pmatrix}
			0 & 1 & 0 & 0 \\
			0 & 0 & 1 & 0 \\
            0 & 0 & 0 & 1 \\
            1 & 0 & 0 & 0 \\
		\end{pmatrix}.
    \end{equation*}
    It is straightforward to see that $A$ is a unitary matrix, and hence a normal matrix. The orthonormal basis of $\mathbb{C}^N$ formed by the eigenvectors of $A$ is given by
    \begin{equation*}
        u_k(n)=\frac{1}{\sqrt{N}}e^{\frac{2\pi i}{N}(n-1)(k-1)}.
    \end{equation*}
    Notice that the coherence of the basis $\mathcal{B}=\lbrace u_1,\dots,u_N\rbrace$ is $\kappa(U)=N^{-1/2}$, achieving the theoretical minimum for an orthonormal basis, implying that the eigenvectors of the directed circular graph are maximally delocalized (see \cite{Shuman_2, Shuman_1}). The resulting graph Fourier transform of a signal $f\in X^N$ is:
    \begin{equation*}
        \hat{f}(k)=\frac{1}{\sqrt{N}}\sum_{n=1}^{N}e^{-\frac{2\pi i}{N}(n-1)(k-1)}f(n),
    \end{equation*}
    which coincides exactly with the Fourier transform on the cyclic group $\mathbb{Z}_N$, i.e., the normalized discrete Fourier transform.
	
	\vspace{0.3cm}
	\textbf{Example:} Let $X=\mathcal{C}([0,1])$ be the Banach space of continuous functions defined on the interval $[0,1]$ equipped with the $\infty$-norm and consider the (directed) circular graph on $N=2$ vertices and the orthonormal basis of $\mathbb{C}^2$ given by the eigenvectors of its adjacency matrix:
	\begin{equation*}
		u_k(n)=\frac{1}{\sqrt{2}}e^{\pi i(n-1)(k-1)}.
	\end{equation*}
	The resulting unitary matrix is
	\begin{equation*}
        U=\frac{1}{\sqrt{2}}
        \begin{pmatrix*}[r]
			1 & 1 \\
			1 & -1 \\
		\end{pmatrix*}.
    \end{equation*}
	The graph Fourier transform is the operator $\mathcal{F}:\mathcal{C}([0,1],\mathbb{C}^2)\to\mathcal{C}([0,1],\mathbb{C}^2)$, given by
	\begin{equation*}
		\mathcal{F}(f,g):=\frac{1}{\sqrt{2}}(f+g,f-g).
	\end{equation*}

    \subsection{Estimates of the operator norms}
    We can establish several fundamental bounds for the operator norm of the graph Fourier transform. These estimates provide a quantitative measure of the spectral spread of signals and serve as a prerequisite for uncertainty principles on graphs.
    
    \begin{Theorem} \label{Continuity of Fourier transform 1}
        Fix an orthonormal basis $\mathcal{B}=\lbrace u_1,\dots,u_N\rbrace$ of $\mathbb{K}^N$, a Banach space $X$, and $f\in X^N$. Then, for $p\in[1,\infty]$,
        \begin{equation*}
            \|\hat{f}\|_\infty\leq\kappa_{p'}(U)\|f\|_p, \quad \|f\|_\infty\leq\kappa_{p'}(U^*)\|\hat{f}\|_p,
        \end{equation*}
        where $p'$ is the conjugate exponent of $p$. In particular, the following estimates hold:
        \begin{equation*}
            \|\hat{f}\|_\infty\leq\|U\|_\infty\|f\|_1, \quad \|f\|_\infty\leq\|U\|_\infty\|\hat{f}\|_1, \quad \|\hat{f}\|_\infty\leq\|f\|_2, \quad \|f\|_\infty\leq\|\hat{f}\|_2.
        \end{equation*}
		Consequently, $\mathcal{F}:L^p(V,X)\to L^\infty(V,X)$ is a topological isomorphism, and
        \begin{equation*}
            \|\mathcal{F}\|_{p\to\infty}=\kappa_{p'}(U).
        \end{equation*}
    \end{Theorem}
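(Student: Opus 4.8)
The plan is to reduce both inequalities to the scalar Hölder inequality applied coordinate-wise, and then to read off the special cases and the exact operator norm from the equality cases of Hölder already recorded in the Preliminaries.

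\textbf{First inequality.} Fix $k$. The triangle inequality in $X$ applied to $\hat f(k)=\sum_{n=1}^{N}\overline{u_k(n)}f(n)$ gives $\|\hat f(k)\|\le\sum_{n=1}^{N}|u_k(n)|\,\|f(n)\|$. Applying Hölder's inequality in $\mathbb{K}^N$ to the sequences $(|u_k(n)|)_n$ and $(\|f(n)\|)_n$ with conjugate exponents $p'$ and $p$ bounds the right-hand side by $\|u_k\|_{p'}\,\|f\|_p\le\kappa_{p'}(U)\,\|f\|_p$; taking the maximum over $k$ yields $\|\hat f\|_\infty\le\kappa_{p'}(U)\|f\|_p$.

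\textbf{Second inequality and special cases.} I would repeat the argument verbatim on the inversion formula $f(n)=\sum_{k=1}^{N}u_k(n)\hat f(k)$, obtaining $\|f(n)\|\le\|u(n)\|_{p'}\,\|\hat f\|_p$, where $u(n)=(u_1(n),\dots,u_N(n))$ is the $n$-th row of $U$. The point to be careful about is the identification $\max_n\|u(n)\|_{p'}=\kappa_{p'}(U^*)$: the $n$-th row of $U$ is the complex conjugate of the $n$-th column of $U^*$, and conjugation does not affect the $p'$-norm, so the maximum of the row norms of $U$ equals the maximum of the column norms of $U^*$, which is by definition $\kappa_{p'}(U^*)$. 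This gives $\|f\|_\infty\le\kappa_{p'}(U^*)\|\hat f\|_p$. The four displayed estimates then follow by specialization: $p=1$ gives $p'=\infty$ and $\kappa_\infty(U)=\kappa(U)=\|U\|_\infty$, which is unchanged under conjugate transpose; $p=2$ gives $p'=2$ and $\kappa_2(U)=\kappa_2(U^*)=1$.

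\textbf{Isomorphism and operator norm.} Because $V$ is finite, $\|\cdot\|_p$ and $\|\cdot\|_\infty$ are equivalent norms on $X^N$, so $L^p(V,X)$ and $L^\infty(V,X)$ share the same topology; $\mathcal{F}$ is a linear bijection with inverse $\mathcal{F}^{-1}$, and both maps are continuous since every output coordinate is a fixed finite $\mathbb{K}$-linear combination of the input coordinates, so $\mathcal{F}\colon L^p(V,X)\to L^\infty(V,X)$ is a topological isomorphism. The first inequality gives $\|\mathcal{F}\|_{p\to\infty}\le\kappa_{p'}(U)$; for the reverse bound I would produce an extremizer. Choose $k_0$ with $\|u_{k_0}\|_{p'}=\kappa_{p'}(U)$, fix a unit vector $x_0\in X$, and set $f(n)=c_n x_0$ with $(c_n)$ the standard sequence attaining equality in Hölder's inequality for the functional $c\mapsto\sum_n\overline{u_{k_0}(n)}c_n$ on $(\mathbb{K}^N,\|\cdot\|_p)$ — namely $c_n=u_{k_0}(n)|u_{k_0}(n)|^{p'-2}$ for $1<p<\infty$ (with $c_n=0$ when $u_{k_0}(n)=0$), a unit vector supported at a maximizing coordinate for $p=1$, and a phase-aligning unimodular sequence for $p=\infty$. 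A direct computation gives $\|\hat f\|_\infty\ge\|\hat f(k_0)\|=\|u_{k_0}\|_{p'}\|f\|_p=\kappa_{p'}(U)\|f\|_p$, hence $\|\mathcal{F}\|_{p\to\infty}=\kappa_{p'}(U)$, with the bound attained. The only genuine obstacle in the whole argument is getting the conjugation and row/column conventions right in the second step so that the row norms of $U$ are correctly matched with the coherence of $U^*$; everything else is a routine invocation of Hölder together with its known sharpness.
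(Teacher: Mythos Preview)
Your proof is correct and follows essentially the same route as the paper: the upper bounds via triangle inequality plus Hölder, and sharpness via the standard Hölder extremizers split into the three cases $p=1$, $1<p<\infty$, $p=\infty$ (your $c_n=u_{k_0}(n)|u_{k_0}(n)|^{p'-2}$ is exactly the paper's $\sgn(u_{k^*}(n))|u_{k^*}(n)|^{p'/p}$ after the identity $p'-2=p'/p-1$). Your explicit remark on why the row norms of $U$ coincide with $\kappa_{p'}(U^*)$ and your one-line justification of the topological isomorphism are points the paper leaves implicit.
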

	\begin{proof}
		For $p\in[1,\infty]$, by Hölder's inequality,
        \begin{equation*}
            \|\hat{f}(k)\|=\left\|\sum_{n=1}^{N}\overline{u_k(n)}f(n)\right\|\leq\sum_{n=1}^{N}|u_k(n)|\|f(n)\|\leq\|u_k\|_{p'}\|f\|_p,
        \end{equation*}
        for all $k\in\lbrace1,\dots,N\rbrace$, and
        \begin{equation*}
            \|f(n)\|=\left\|\sum_{k=1}^{N}u_k(n)\hat{f}(k)\right\|\leq\sum_{k=1}^{N}|u_k(n)|\|\hat{f}(k)\|\leq\|u(n)\|_{p'}\|\hat{f}\|_p,
        \end{equation*}
        for all $n\in\lbrace1,\dots,N\rbrace$. Consequently, we get
        \begin{equation*}
            \|\hat{f}\|_\infty\leq\kappa_{p'}(U)\|f\|_p, \quad \|f\|_\infty\leq\kappa_{p'}(U^*)\|\hat{f}\|_p.
        \end{equation*}
        In particular, the following estimates hold:
        \begin{equation*}
            \|\hat{f}\|_\infty\leq\|U\|_\infty\|f\|_1, \quad \|f\|_\infty\leq\|U\|_\infty\|\hat{f}\|_1, \quad \|\hat{f}\|_\infty\leq\|f\|_2, \quad \|f\|_\infty\leq\|\hat{f}\|_2.
        \end{equation*}
        These estimates show that $\|\mathcal{F}\|_{p\to\infty}\leq\kappa_{p'}(U)$. We need the reverse inequality. Fix $x_0\in X$, with $\|x_0\|=1$, and distinguish three cases.
        
        \noindent \textit{1° case}: Let $p=1$, so that $p'=\infty$, and $k^*,n^*\in\lbrace1,\dots,N\rbrace$ such that
        \begin{equation*}
            \kappa(U)=\max_{1\leq k\leq N}\|u_k\|_\infty=|u_{k^*}(n^*)|.
        \end{equation*}
        Now consider the signal $f:=(0,\dots,x_0,\dots,0)$ having $x_0$ in the $n^*$-th position, such that $\|f\|_1=\|x_0\|=1$. Thus,
        \begin{equation*}
            \hat{f}(k^*)=\sum_{n=1}^{N}\overline{u_{k^*}(n)}f(n)=\overline{u_{k^*}(n^*)}x_0,
        \end{equation*}
        and so
        \begin{equation*}
            \frac{\|\hat{f}\|_\infty}{\|f\|_1}\geq\|\hat{f}(k^*)\|=|u_{k^*}(n^*)|=\kappa(U).
        \end{equation*}
        Hence, we get the desired inequality.
        
        \noindent \textit{2° case}: Let $1<p<\infty$ and $k^*\in\lbrace1,\dots,N\rbrace$ such that
        \begin{equation*}
            \kappa_{p'}(U)=\max_{1\leq k\leq N}\|u_k\|_{p'}=\|u_{k^*}\|_{p'}.
        \end{equation*}
        Now consider the signal $f(n):=\sgn(u_{k^*}(n))|u_{k^*}(n)|^{p'/p}x_0$, where
        \begin{equation*}
            \sgn(z):=
            \begin{cases}
                z/|z| & \text{if } z\neq0, \\
                0 & \text{if } z=0,
            \end{cases}
        \end{equation*}
        is the sign function of complex numbers, so that $\overline{z}\sgn(z)=|z|$, for all $z\in\mathbb{C}$. Thus,
        \begin{equation*}
            \|f\|_p=\left(\sum_{n=1}^{N}\|f(n)\|^p\right)^{1/p}=\left(\sum_{n=1}^{N}|u_{k^*}(n)|^{p'}\right)^{1/p}=\|u_{k^*}\|_{p'}^{p'/p},
        \end{equation*}
        and, by using that $1+p'/p=p'$,
        \begin{equation*}
            \hat{f}(k^*)=\sum_{n=1}^{N}\overline{u_{k^*}(n)}\sgn(u_{k^*}(n))|u_{k^*}(n)|^{p'/p}x_0=\sum_{n=1}^{N}|u_{k^*}(n)|^{1+p'/p}x_0=\|u_{k^*}\|_{p'}^{p'}x_0.
        \end{equation*}
        So,
        \begin{equation*}
            \frac{\|\hat{f}\|_\infty}{\|f\|_p}\geq\frac{\|\hat{f}(k^*)\|}{\|f\|_p}=\frac{\|u_{k^*}\|_{p'}^{p'}}{\|u_{k^*}\|_{p'}^{p'/p}}=\|u_{k^*}\|_{p'}^{p'(1-1/p)}=\|u_{k^*}\|_{p'}=\kappa_{p'}(U).
        \end{equation*}
        Hence, we get the desired inequality.
        
        \noindent \textit{3° case}: Let $p=\infty$, so that $p'=1$, and $k^*\in\lbrace1,\dots,N\rbrace$ such that
        \begin{equation*}
            \kappa_1(U)=\max_{1\leq k\leq N}\|u_k\|_1=\|u_{k^*}\|_1.
        \end{equation*}
        Now consider the signal $f(n):=\sgn(u_{k^*}(n))x_0$, note that $\|f\|_\infty=\|x_0\|=1$ and
        \begin{equation*}
            \hat{f}(k^*)=\sum_{n=1}^{N}\overline{u_{k^*}(n)}\sgn(u_{k^*}(n))x_0=\sum_{n=1}^{N}|u_{k^*}(n)|x_0=\|u_{k^*}\|_1x_0.
        \end{equation*}
        So,
        \begin{equation*}
            \frac{\|\hat{f}\|_\infty}{\|f\|_1}\geq\|\hat{f}(k^*)\|=\|u_{k^*}\|_1=\kappa_1(U).
        \end{equation*}
        Hence, we get the desired inequality. \qedhere
	\end{proof}
    
    \begin{Theorem} \label{Continuity of Fourier transform 2}
        Fix an orthonormal basis $\mathcal{B}=\lbrace u_1,\dots,u_N\rbrace$ of $\mathbb{K}^N$, a Banach space $X$, and $f\in X^N$. Then, for $q\in[1,\infty]$,
        \begin{equation*}
            \|\hat{f}\|_q\leq\kappa_q(U^*)\|f\|_1, \quad \|f\|_q\leq\kappa_q(U)\|\hat{f}\|_1.
        \end{equation*}
        In particular, the following estimates hold:
        \begin{equation*}
            \|\hat{f}\|_\infty\leq\|U\|_\infty\|f\|_1, \quad \|f\|_\infty\leq\|U\|_\infty\|\hat{f}\|_1, \quad \|\hat{f}\|_2\leq\|f\|_1, \quad \|f\|_2\leq\|\hat{f}\|_1.
        \end{equation*}
		Consequently, $\mathcal{F}:L^1(V,X)\to L^q(V,X)$ is a topological isomorphism, and
        \begin{equation*}
            \|\mathcal{F}\|_{1\to q}=\kappa_q(U^*).
        \end{equation*}
    \end{Theorem}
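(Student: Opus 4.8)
The plan is to proceed exactly dual to the proof of Theorem~\ref{Continuity of Fourier transform 1}, exploiting the interchange of roles between $U$ and $U^*$: first establish the two norm inequalities by applying the triangle inequality in $L^q(V,X)$ to the defining sums, then exhibit a delta signal attaining equality in the operator-norm bound.

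For the upper bounds I would write $\hat f$ as a sum of elementary signals: setting $g_n(k):=\overline{u_k(n)}\,f(n)$, one has $\hat f=\sum_{n=1}^{N}g_n$, and the triangle inequality in the Banach space $L^q(V,X)$ gives
\begin{equation*}
\|\hat f\|_q\leq\sum_{n=1}^{N}\|g_n\|_q=\sum_{n=1}^{N}\|f(n)\|\left(\sum_{k=1}^{N}|u_k(n)|^q\right)^{1/q}=\sum_{n=1}^{N}\|f(n)\|\,\|u(n)\|_q,
\end{equation*}
with the evident modification (ordinary triangle inequality for the maximum) when $q=\infty$. The key observation is that $u(n)$, the $n$-th row of $U$, is the conjugate of the $n$-th column of $U^{*}$, so $\|u(n)\|_q$ equals the $q$-norm of that column and $\max_{1\leq n\leq N}\|u(n)\|_q=\kappa_q(U^{*})$. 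Hence $\|\hat f\|_q\leq\kappa_q(U^{*})\|f\|_1$. Applying the same argument to the inversion formula $f(n)=\sum_{k=1}^{N}u_k(n)\hat f(k)$ yields $\|f\|_q\leq\kappa_q(U)\|\hat f\|_1$. The four displayed special cases follow by specialising to $q=\infty$, where $\kappa_\infty(U^{*})=\kappa(U^{*})=\|U^{*}\|_\infty=\|U\|_\infty$, and to $q=2$, where $\kappa_2(U^{*})=\kappa_2(U)=1$ since the columns of a unitary matrix are unit vectors.

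It remains to prove $\|\mathcal{F}\|_{1\to q}\geq\kappa_q(U^{*})$. Fix $x_0\in X$ with $\|x_0\|=1$, choose $n^{*}$ with $\|u(n^{*})\|_q=\kappa_q(U^{*})$, and take the delta signal $f=(0,\dots,x_0,\dots,0)$ having $x_0$ in the $n^{*}$-th slot, so that $\|f\|_1=1$. Then $\hat f(k)=\overline{u_k(n^{*})}\,x_0$, hence $\|\hat f(k)\|=|u_k(n^{*})|$ and
\begin{equation*}
\frac{\|\hat f\|_q}{\|f\|_1}=\left(\sum_{k=1}^{N}|u_k(n^{*})|^q\right)^{1/q}=\|u(n^{*})\|_q=\kappa_q(U^{*}),
\end{equation*}
again with the obvious modification for $q=\infty$. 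Combined with the upper bound this gives $\|\mathcal{F}\|_{1\to q}=\kappa_q(U^{*})$. Finally, $\mathcal{F}$ is a linear bijection of $X^N$ onto itself with inverse $\mathcal{F}^{-1}$ and is bounded as a map $L^1(V,X)\to L^q(V,X)$ by the first inequality; since both are Banach spaces, the bounded inverse theorem (or simply the equivalence of all $L^p$ norms on $X^N$) shows $\mathcal{F}^{-1}$ is bounded, so $\mathcal{F}$ is a topological isomorphism.

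I do not anticipate a serious obstacle; the only point requiring care is the bookkeeping in passing from $U$ to $U^{*}$—that the rows of $U$ furnish, up to conjugation, the columns of $U^{*}$, so that the relevant coherence constant is $\kappa_q(U^{*})$ rather than $\kappa_q(U)$—together with handling the $q=\infty$ endpoint uniformly with the finite exponents.
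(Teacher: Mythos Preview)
Your proof is correct and follows essentially the same route as the paper: the upper bound is obtained by pulling the sum over $n$ outside the $L^q$ norm (the paper phrases this as Minkowski's integral inequality, you as the triangle inequality in $L^q(V,X)$—identical in the finite-sum setting), and the lower bound comes from the same delta signal concentrated at a vertex $n^*$ maximizing $\|u(n)\|_q$. The only cosmetic difference is that the paper defers the $q=\infty$ case of the sharpness argument to Theorem~\ref{Continuity of Fourier transform 1}, whereas you handle it uniformly.
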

    \begin{proof}
        For $q\in[1,\infty]$, by Minkowski's integral inequality (see \cite{Folland}, Theorem 6.19, p. 194),
        \begin{align*}
            \|\hat{f}\|_q&=\left(\sum_{k=1}^{N}\|\hat{f}(k)\|^q\right)^{1/q}=\left(\sum_{k=1}^{N}\left\|\sum_{n=1}^{N}\overline{u_k(n)}f(n)\right\|^q\right)^{1/q} \\
            &\leq\sum_{n=1}^{N}\left(\sum_{k=1}^{N}|u_k(n)|^q\|f(n)\|^q\right)^{1/q}=\sum_{n=1}^{N}\|u(n)\|_q\|f(n)\|\leq\kappa_q(U^*)\|f\|_1,
        \end{align*}
        and
        \begin{align*}
            \|f\|_q&=\left(\sum_{n=1}^{N}\|f(n)\|^q\right)^{1/q}=\left(\sum_{n=1}^{N}\left\|\sum_{k=1}^{N}u_k(n)\hat{f}(k)\right\|^q\right)^{1/q} \\
            &\leq\sum_{k=1}^{N}\left(\sum_{n=1}^{N}|u_k(n)|^q\|\hat{f}(k)\|^q\right)^{1/q}=\sum_{k=1}^{N}\|u_k\|_q\|\hat{f}(k)\|\leq\kappa_q(U)\|\hat{f}\|_1.
        \end{align*}
        In particular, the following estimates hold:
        \begin{equation*}
            \|\hat{f}\|_\infty\leq\|U\|_\infty\|f\|_1, \quad \|f\|_\infty\leq\|U\|_\infty\|\hat{f}\|_1, \quad \|\hat{f}\|_2\leq\|f\|_1, \quad \|f\|_2\leq\|\hat{f}\|_1.
        \end{equation*}
        These estimates show that $\|\mathcal{F}\|_{1\to q}\leq\kappa_q(U^*)$. We need the reverse inequality. Fix $x_0\in X$, with $\|x_0\|=1$, and let $n^*\in\lbrace1,\dots,N\rbrace$ such that
        \begin{equation*}
            \kappa_q(U^*)=\max_{1\leq n\leq N}\|u(n)\|_q=\|u(n^*)\|_q.
        \end{equation*}
        Now consider the signal $f:=(0,\dots,x_0,\dots,0)$ having $x_0$ in the $n^*$-th position, such that $\|f\|_1=\|x_0\|=1$. Thus,
        \begin{equation*}
            \hat{f}(k)=\sum_{n=1}^{N}\overline{u_k(n)}f(n)=\overline{u_k(n^*)}x_0,
        \end{equation*}
        and so, if $1\leq q<\infty$,
        \begin{equation*}
            \frac{\|\hat{f}\|_q}{\|f\|_1}=\left(\sum_{k=1}^{N}\|\hat{f}(k)\|^q\right)^{1/q}=\left(\sum_{k=1}^{N}|u_k(n^*)|^q\right)^{1/q}=\|u(n^*)\|_q=\kappa_q(U^*).
        \end{equation*}
        Hence, we get the desired inequality. If $q=\infty$, we obtain the inequality by Theorem \ref{Continuity of Fourier transform 1}. \qedhere
    \end{proof}
    
    \begin{Theorem} \label{Continuity of Fourier transform 3}
        Fix an orthonormal basis $\mathcal{B}=\lbrace u_1,\dots,u_N\rbrace$ of $\mathbb{K}^N$, a Banach space $X$, and $f\in X^N$. Then, for $p,q\in[1,\infty]$,
        \begin{equation*}
            \|\hat{f}\|_q\leq\|U\|_{p',q}\|f\|_p, \quad \|f\|_q\leq\|U^*\|_{p',q}\|\hat{f}\|_p,
        \end{equation*}
        where $p'$ is the conjugate exponent of $p$. In particular, the following estimates hold:
        \begin{equation*}
            \|\hat{f}\|_2\leq\sqrt{N}\|f\|_2, \quad \|f\|_2\leq\sqrt{N}\|\hat{f}\|_2.
        \end{equation*}
		Consequently, $\mathcal{F}:L^p(V,X)\to L^q(V,X)$ is a topological isomorphism, and
		\begin{equation*}
			\|\mathcal{F}\|_{p\to q}\leq\|U\|_{p',q}.
		\end{equation*}
    \end{Theorem}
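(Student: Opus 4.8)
The plan is to run the same pointwise Hölder estimate used in the proof of Theorem~\ref{Continuity of Fourier transform 1}, but now to measure the output in the $\ell^q$ norm over the frequency variable rather than in $\ell^\infty$. For each fixed $k$, the triangle inequality in $X$, homogeneity of the norm, and Hölder's inequality applied to the scalar sequences $(|u_k(n)|)_n$ and $(\|f(n)\|)_n$ give
\[
\|\hat f(k)\|=\left\|\sum_{n=1}^{N}\overline{u_k(n)}f(n)\right\|\le\sum_{n=1}^{N}|u_k(n)|\,\|f(n)\|\le\|u_k\|_{p'}\,\|f\|_p .
\]
Raising to the power $q$ and summing over $k\in\{1,\dots,N\}$, with the obvious $\ell^\infty$ modification when $q=\infty$, yields
\[
\|\hat f\|_q\le\left(\sum_{k=1}^{N}\|u_k\|_{p'}^{q}\right)^{1/q}\|f\|_p=\|U\|_{p',q}\,\|f\|_p,
\]
since $\|U\|_{p',q}$ is by definition the mixed $\ell^{p',q}$ norm of the columns of $U$. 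Running the identical argument on the inversion formula $f(n)=\sum_{k}u_k(n)\hat f(k)$, and using that the rows $u(n)$ of $U$ are the conjugates of the columns of $U^*$, gives $\|f\|_q\le\|U^*\|_{p',q}\,\|\hat f\|_p$.

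For the particular estimates I would take $p=q=2$, so that $p'=2$; since each $u_k$ is a unit vector, $\|U\|_{2,2}=\left(\sum_{k=1}^{N}\|u_k\|_2^2\right)^{1/2}=\sqrt N$, and the orthonormality relation $\langle u(n),u(m)\rangle=\delta_{nm}$ gives $\|U^*\|_{2,2}=\sqrt N$ as well, producing $\|\hat f\|_2\le\sqrt N\,\|f\|_2$ and $\|f\|_2\le\sqrt N\,\|\hat f\|_2$. I would also record that the choice $q=\infty$ recovers Theorem~\ref{Continuity of Fourier transform 1}, since $\|U\|_{p',\infty}=\max_k\|u_k\|_{p'}=\kappa_{p'}(U)$.

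Finally, the topological-isomorphism claim and the operator-norm bound follow with no extra work: by the discussion of the inversion formula, $\mathcal{F}$ is a linear bijection of $X^N$ with inverse $\mathcal{F}^{-1}$, and since $V$ is finite all the norms $\|\cdot\|_r$ on $X^N$ are equivalent, so $\mathcal{F}$ is automatically a topological isomorphism from $L^p(V,X)$ onto $L^q(V,X)$; the first displayed inequality then exhibits the bound $\|\mathcal{F}\|_{p\to q}\le\|U\|_{p',q}$, while the second one, applied with the exponents interchanged, bounds $\mathcal{F}^{-1}$ from $L^q(V,X)$ to $L^p(V,X)$. I do not expect any genuine obstacle here; the only points that need care are the endpoint conventions — the conjugate exponent $p'$ when $p\in\{1,\infty\}$ and the $\ell^\infty$ replacement of the sum over $k$ when $q=\infty$ — together with the observation that, unlike in Theorems~\ref{Continuity of Fourier transform 1} and~\ref{Continuity of Fourier transform 2}, the constant $\|U\|_{p',q}$ need not be the exact operator norm of $\mathcal{F}$ when $1<p$ and $q<\infty$, which is why the statement asserts only an inequality and makes no sharpness claim.
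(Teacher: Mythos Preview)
Your proof is correct and follows essentially the same route as the paper: a pointwise H\"older estimate $\|\hat f(k)\|\le\|u_k\|_{p'}\|f\|_p$, followed by taking the $\ell^q$ norm over $k$ to produce $\|U\|_{p',q}$, with the dual argument via the inversion formula for the second inequality. Your added remarks---the explicit computation of $\|U\|_{2,2}=\sqrt{N}$ from orthonormality, the recovery of Theorem~\ref{Continuity of Fourier transform 1} at $q=\infty$, and the justification of the topological-isomorphism claim via equivalence of norms on the finite space $X^N$---are all correct and slightly more detailed than what the paper writes out.
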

	\begin{proof}
        For $p,q\in[1,\infty]$, by Hölder's inequality,
        \begin{equation*}
            \|\hat{f}(k)\|=\left\|\sum_{n=1}^{N}\overline{u_k(n)}f(n)\right\|\leq\sum_{n=1}^{N}|u_k(n)|\|f(n)\|\leq\|u_k\|_{p'}\|f\|_p,
        \end{equation*}
        for all $k\in\lbrace1,\dots,N\rbrace$, and
        \begin{equation*}
            \|f(n)\|=\left\|\sum_{k=1}^{N}u_k(n)\hat{f}(k)\right\|\leq\sum_{k=1}^{N}|u_k(n)|\|\hat{f}(k)\|\leq\|u(n)\|_{p'}\|\hat{f}\|_p,
        \end{equation*}
        for all $n\in\lbrace1,\dots,N\rbrace$. Consequently, we get
        \begin{equation*}
            \|\hat{f}\|_q=\left(\sum_{k=1}^{N}\|\hat{f}(k)\|^q\right)^{1/q}\leq\left(\sum_{k=1}^{N}\|u_k\|_{p'}^q\|f\|_p^q\right)^{1/q}=\|U\|_{p',q}\|f\|_p,
        \end{equation*}
        and
        \begin{equation*}
            \|f\|_q=\left(\sum_{n=1}^{N}\|f(n)\|^q\right)^{1/q}\leq\left(\sum_{n=1}^{N}\|u(n)\|_{p'}^q\|\hat{f}\|_p^q\right)^{1/q}=\|U^*\|_{p',q}\|\hat{f}\|_p.
        \end{equation*}
        In particular, the following estimates hold:
        \begin{equation*}
            \|\hat{f}\|_2\leq\sqrt{N}\|f\|_2, \quad \|f\|_2\leq\sqrt{N}\|\hat{f}\|_2.
        \end{equation*}
        These estimates show that $\|\mathcal{F}\|_{p\to q}\leq\|U\|_{p',q}$. \qedhere
	\end{proof}
	
	\begin{Theorem} \label{Parseval and Plancherel}
        Fix an orthonormal basis $\mathcal{B}=\lbrace u_1,\dots,u_N\rbrace$ of $\mathbb{K}^N$, a Hilbert space $X$, and $f,g\in X^N$. Then,
        \begin{alignat*}{4}
            \langle\hat{f},\hat{g}\rangle&=\langle f,g\rangle &&\qquad (\text{Parseval's identity}), \\
            \|\hat{f}\|_2&=\|f\|_2 &&\qquad (\text{Plancherel's equality}).
        \end{alignat*}
        Consequently, $\mathcal{F}:L^2(V,X)\to L^2(V,X)$ is an isometric isomorphism (equivalently, $\mathcal{F}$ is a unitary operator), and
        \begin{equation*}
			\|\mathcal{F}\|_{2\to 2}=1.
		\end{equation*}
    \end{Theorem}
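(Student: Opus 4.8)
The plan is to prove Parseval's identity directly, by expanding the inner product on $X^N$ in terms of the definition of $\mathcal{F}$ and invoking the orthonormality relations from the preliminaries; Plancherel's equality is then the special case $g=f$, and the operator-theoretic conclusions follow formally.

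First I would write $\langle\hat f,\hat g\rangle=\sum_{k=1}^{N}\langle\hat f(k),\hat g(k)\rangle$ and substitute $\hat f(k)=\sum_{n}\overline{u_k(n)}f(n)$ and $\hat g(k)=\sum_{m}\overline{u_k(m)}g(m)$. Since the inner product on $X$ is linear in the first slot and conjugate-linear in the second (the convention under which $\langle u_h,u_k\rangle=\sum_n u_h(n)\overline{u_k(n)}=\delta_{hk}$), the scalars come out and give $\langle\hat f(k),\hat g(k)\rangle=\sum_{n,m}\overline{u_k(n)}\,u_k(m)\,\langle f(n),g(m)\rangle$. Summing over $k$ and interchanging the finite sums yields $\langle\hat f,\hat g\rangle=\sum_{n,m}\bigl(\sum_{k}\overline{u_k(n)}u_k(m)\bigr)\langle f(n),g(m)\rangle$. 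Now $\sum_{k}\overline{u_k(n)}u_k(m)$ is the complex conjugate of $\sum_{k}u_k(n)\overline{u_k(m)}=\langle u(n),u(m)\rangle=\delta_{nm}$, hence equals $\delta_{nm}$, and the double sum collapses to $\sum_{n}\langle f(n),g(n)\rangle=\langle f,g\rangle$. Setting $g=f$ gives $\|\hat f\|_2^2=\langle\hat f,\hat f\rangle=\langle f,f\rangle=\|f\|_2^2$, which is Plancherel's equality.

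For the remaining assertions: $\mathcal{F}$ is a linear bijection of $X^N$ onto itself by the inversion formula already established, and Plancherel shows it preserves the $L^2$ norm exactly, so it is an isometric isomorphism of $L^2(V,X)$ onto itself and $\|\mathcal{F}\|_{2\to2}=1$. When $X$ is a Hilbert space this can equivalently be phrased as unitarity: Parseval's identity states $\langle\mathcal{F}f,\mathcal{F}g\rangle=\langle f,g\rangle$ for all $f,g$, i.e. $\mathcal{F}^{*}\mathcal{F}=I$, and combined with surjectivity this gives $\mathcal{F}^{*}=\mathcal{F}^{-1}$.

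I do not expect a genuine obstacle here: the graph is finite, so every sum is finite and the whole computation is purely algebraic, using only the sesquilinearity of the inner product and the orthonormality of $\mathcal{B}$; no convergence or measurability issue arises in the Bochner setting. The one point that needs care is the bookkeeping of complex conjugates — in particular, recognizing that the "row" orthonormality relation $\sum_k u_k(n)\overline{u_k(m)}=\delta_{nm}$ must be conjugated before being applied — but since $\delta_{nm}$ is real this step is harmless.
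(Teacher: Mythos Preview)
Your proof is correct and follows essentially the same direct computation as the paper: both expand the inner product $\langle\hat f,\hat g\rangle$ and reduce it to $\langle f,g\rangle$ via the orthonormality relations of $\mathcal{B}$. The only cosmetic difference is that the paper expands just the first factor $\hat f(k)$ and then recognizes $\sum_k u_k(n)\hat g(k)=g(n)$ via the inversion formula, whereas you expand both factors and invoke the row orthonormality $\sum_k u_k(n)\overline{u_k(m)}=\delta_{nm}$ directly; these are the same idea.
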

	\begin{proof}
        This is a straightforward calculation:
        \begin{align*}
			\langle\hat{f},\hat{g}\rangle&=\sum_{k=1}^{N}\langle\hat{f}(k),\hat{g}(k)\rangle=\sum_{k=1}^{N}\left\langle\sum_{n=1}^{N}\overline{u_k(n)}f(n),\hat{g}(k)\right\rangle=\sum_{k=1}^{N}\sum_{n=1}^{N}\overline{u_k(n)}\langle f(n),\hat{g}(k)\rangle \\
            &=\sum_{n=1}^{N}\sum_{k=1}^{N}\langle f(n),u_k(n)\hat{g}(k)\rangle=\sum_{n=1}^{N}\left\langle f(n),\sum_{k=1}^{N}u_k(n)\hat{g}(k)\right\rangle=\sum_{n=1}^{N}\langle f(n),g(n)\rangle \\
            &=\langle f,g\rangle.
		\end{align*}
        This equality shows that $\mathcal{F}$ is a unitary operator, and $\|\mathcal{F}\|_{2\to 2}=1$. \qedhere
	\end{proof}
	
	\textbf{Remark:} Theorem \ref{Parseval and Plancherel} shows the validity of Parseval's identity and Plancherel's equality for $\mathcal{F}:L^2(V,X)\to L^2(V,X)$, if $X$ is a Hilbert space, and yields the sharp value $\|\mathcal{F}\|_{2\to 2}=1$. The operator norms for the Fourier transform $\mathcal{F}$ across different $L^p$ spaces are summarized in Table \ref{Fourier_norms}.
    \begin{table}[ht]
        \centering
        \caption{Operator norms for the vector-valued Fourier transform $\mathcal{F}$}
        \label{Fourier_norms}
        \renewcommand{\arraystretch}{1.5} 
        \begin{tabular}{ccl} 
            \toprule
            Operator & Operator norm & Note \\ \midrule
            $\mathcal{F}:L^p(V,X)\to L^\infty(V,X)$ & $=\kappa_{p'}(U)$   & - \\
            $\mathcal{F}:L^1(V,X)\to L^q(V,X)$      & $=\kappa_q(U^*)$    & - \\
            $\mathcal{F}:L^p(V,X)\to L^q(V,X)$      & $\leq\|U\|_{p',q}$  & - \\
            $\mathcal{F}:L^2(V,X)\to L^2(V,X)$      & $=1$                & If $X$ is a Hilbert space \\
            \bottomrule
            \end{tabular}
    \end{table}
    
	\textbf{Remark:} Theorems \ref{Continuity of Fourier transform 1} and \ref{Continuity of Fourier transform 2} provide the sharp values of the operator norms $\|\mathcal{F}\|_{p\to\infty}$ and $\|\mathcal{F}\|_{1\to q}$, involving the $p'$-coherence of $U$ and the $q$-coherence of $U^*$, respectively. In contrast, the estimate given in Theorem \ref{Continuity of Fourier transform 3} is not sharp, in general, and depends heavily on the matrix structure. For example, if $U=I$ is the identity matrix and  $X$ is a Hilbert space, Theorem \ref{Parseval and Plancherel} yields:
    \begin{equation*}
        \|\mathcal{F}\|_{2\to 2}=1<\sqrt{N}=\|I\|_{2,2}.
    \end{equation*}
    As a further counterexample, let $X=\mathbb{C}^2$ be the Banach space of complex vectors, equipped with the $\infty$-norm, and consider the (directed) circular graph with $N=2$ vertices and the orthonormal basis $\mathcal{B}$ of $\mathbb{C}^2$ given by the eigenvectors of its adjacency matrix:
	\begin{equation*}
		u_k(n)=\frac{1}{\sqrt{2}}e^{\pi i(n-1)(k-1)}.
	\end{equation*}
	The resulting unitary matrix is
	\begin{equation*}
        U=\frac{1}{\sqrt{2}}
        \begin{pmatrix*}[r]
			1 & 1 \\
			1 & -1 \\
		\end{pmatrix*}.
    \end{equation*}
	The graph Fourier transform is the operator $\mathcal{F}:\mathbb{C}^2\times\mathbb{C}^2\to\mathbb{C}^2\times\mathbb{C}^2$, given by
	\begin{equation*}
		\mathcal{F}(x,y):=\frac{1}{\sqrt{2}}(x+y,x-y).
	\end{equation*}
	For the vectors $x=(1,1)$ and $y=(1,-1)$, we have $\|x\|_\infty=1$ and $\|y\|_\infty=1$, such that
	\begin{equation*}
		\|(x,y)\|_2=\sqrt{1+1}=\sqrt{2}.
	\end{equation*}
	However,
	\begin{equation*}
		\mathcal{F}(x,y)=\frac{1}{\sqrt{2}}((2,0),(0,2)), \quad \text{implying} \quad \|\mathcal{F}(x,y)\|_2=\frac{1}{\sqrt{2}}\sqrt{4+4}=2.
	\end{equation*}
	This shows that $\mathcal{F}:L^2(V,X)\to L^2(V,X)$ is not an isometry, as
	\begin{equation*}
		\|\mathcal{F}\|_{2\to 2}\geq\frac{\|\mathcal{F}(x,y)\|_2}{\|(x,y)\|_2}=\sqrt{2}>1.
	\end{equation*}
    In particular, since $\|\mathcal{F}\|_{2\to 2}\leq\sqrt{2}$ by Theorem \ref{Continuity of Fourier transform 3}, we have exactly $\|\mathcal{F}\|_{2\to 2}=\sqrt{2}$. The following theorem provides the general bound applicable to any Banach space.
    
	\begin{Theorem} \label{Characterization of Plancherel}
        Fix a Banach space $X$, and $N\in\mathbb{N}$.
        \begin{enumerate}[label=(\alph*)]
            \item Fix an orthonormal basis $\mathcal{B}=\lbrace u_1,\dots,u_N\rbrace$ of $\mathbb{K}^N$. Then, $\mathcal{F}:L^2(V,X)\to L^2(V,X)$ is a topological isomorphism, and $1\leq\|\mathcal{F}\|_{2\to 2}\leq N^{1/2}$.
            \item If $N=1$, then $\|\hat{f}\|_2=\|f\|_2$ for all $u\in\mathbb{S}^1$ and $f\in X$. So, $\mathcal{F}:L^2(V,X)\to L^2(V,X)$ is an isometric isomorphism, and $\|\mathcal{F}\|_{2\to 2}=1$.
            \item If $N\geq2$, then $\|\hat{f}\|_2=\|f\|_2$ for all $U\in\mathrm{U}(N,\mathbb{C})$ and $f\in X^N$ if and only if $X$ is a Hilbert space.
            \item If $N\geq2$, then $\|\mathcal{F}\|_{2\to 2}=1$ for all $U\in\mathrm{U}(N,\mathbb{C})$ if and only if $X$ is a Hilbert space.
        \end{enumerate}
    \end{Theorem}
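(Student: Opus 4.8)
The plan is to dispatch parts (a) and (b) quickly, deduce (d) from (c), and concentrate the real work on the converse direction of (c).

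For (a): the bound $\|\mathcal{F}\|_{2\to 2}\le N^{1/2}$ is exactly the case $p=q=2$ of Theorem~\ref{Continuity of Fourier transform 3}, since $\|U\|_{2,2}=\bigl(\sum_{k=1}^N\|u_k\|_2^2\bigr)^{1/2}=N^{1/2}$. For the lower bound $\|\mathcal{F}\|_{2\to 2}\ge 1$ I would test on a delta signal $f=(x_0,0,\dots,0)$ with $\|x_0\|=1$: then $\hat f(k)=\overline{u_k(1)}\,x_0$, so $\|\hat f\|_2^2=\sum_{k=1}^N|u_k(1)|^2\,\|x_0\|^2=\|u(1)\|_2^2=1$ because the rows of the unitary matrix $U$ are orthonormal; hence $\mathcal{F}$ is even an isometry on the copy of $X$ sitting in the first coordinate. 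That $\mathcal{F}$ is a topological isomorphism was already observed (it is linear, bounded, and bijective with bounded inverse $\mathcal{F}^{-1}$). For (b), when $N=1$ the transform is merely $\hat f=\bar u f$ with $|u|=1$, which is norm-preserving, so $\|\mathcal{F}\|_{2\to 2}=1$.

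For (c), the implication ``$X$ Hilbert $\Rightarrow$ Plancherel for every $U$'' is Theorem~\ref{Parseval and Plancherel} applied to the orthonormal basis given by the columns of $U$. For the converse I would use a single well-chosen unitary rather than the whole family. Assume $N\ge 2$ and that $\|\hat f\|_2=\|f\|_2$ for every $U\in\mathrm{U}(N,\mathbb{C})$ and every $f\in X^N$. Given arbitrary $x,y\in X$, apply the hypothesis to the block-diagonal unitary $U=\bigl(\begin{smallmatrix}U_2&0\\0&I_{N-2}\end{smallmatrix}\bigr)$, where $U_2=\tfrac{1}{\sqrt{2}}\bigl(\begin{smallmatrix}1&1\\1&-1\end{smallmatrix}\bigr)$, and to the signal $f=(x,y,0,\dots,0)$. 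A one-line computation gives $\hat f=\bigl(\tfrac{1}{\sqrt{2}}(x+y),\tfrac{1}{\sqrt{2}}(x-y),0,\dots,0\bigr)$, so Plancherel reads $\|x\|^2+\|y\|^2=\tfrac{1}{2}\|x+y\|^2+\tfrac{1}{2}\|x-y\|^2$, i.e.\ the parallelogram identity holds in $X$. By the Jordan--von Neumann theorem a Banach space whose norm satisfies the parallelogram identity is a Hilbert space, which closes (c). (Only the real Hadamard block is used, so the step is insensitive to whether $\mathbb{K}=\mathbb{R}$ or $\mathbb{C}$.)

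Finally, (d) reduces to (c): again ``$\Leftarrow$'' is Theorem~\ref{Parseval and Plancherel}, and for ``$\Rightarrow$'' I observe that the Fourier transform attached to the unitary $U^*$ has matrix $(U^*)^*=U$ and hence coincides with $\mathcal{F}_U^{-1}$; since $U^*$ is again unitary, the hypothesis gives both $\|\mathcal{F}_U\|_{2\to 2}\le 1$ and $\|\mathcal{F}_U^{-1}\|_{2\to 2}\le 1$, so $\|f\|_2=\|\mathcal{F}_U^{-1}\mathcal{F}_U f\|_2\le\|\mathcal{F}_U f\|_2\le\|f\|_2$ forces $\|\mathcal{F}_U f\|_2=\|f\|_2$ for all $f$ and all $U$, which is precisely the hypothesis of part (c); hence $X$ is a Hilbert space. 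The main obstacle is the converse of (c): the key realization is that a single non-monomial unitary — a $2\times 2$ Hadamard block already suffices — forces the parallelogram identity, after which Jordan--von Neumann does the rest and (d) is pure bookkeeping.
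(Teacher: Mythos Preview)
Your proposal is correct and follows essentially the same route as the paper: the upper bound in (a) via Theorem~\ref{Continuity of Fourier transform 3}, the lower bound by testing on a rank-one signal (the paper uses $f(n)=\alpha(n)x_0$ for an arbitrary scalar signal $\alpha$ and invokes scalar Plancherel, while your delta signal is the special case $\alpha=\delta_1$), the converse of (c) via the embedded $2\times2$ Hadamard block and Jordan--von~Neumann, and (d) by noting that the hypothesis applied to $U^*$ bounds $\mathcal{F}^{-1}$ and forces the isometry in (c).
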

    \begin{proof}
        We prove the four properties separately.
        \begin{enumerate}[label=(\alph*), leftmargin=*, widest=a, font=\bfseries]
            \item Fix $x_0\in X$, with $\|x_0\|=1$, and a scalar signal $\alpha\in\mathbb{K}^N$. Hence, by Plancherel's equality on the Hilbert space $\mathbb{K}^N$,
            \begin{equation*}
                \sum_{k=1}^{N}|\hat{\alpha}(k)|^2=\|\hat{\alpha}\|_2^2=\|\alpha\|_2^2=\sum_{n=1}^{N}|\alpha(n)|^2.
            \end{equation*}
            Now consider the signal $f(n):=\alpha(n)x_0$, so that
            \begin{align*}
                \|\hat{f}\|_2^2&=\sum_{k=1}^{N}\|\hat{f}(k)\|^2=\sum_{k=1}^{N}\left\|\sum_{n=1}^{N}\overline{u_k(n)}f(n)\right\|^2=\sum_{k=1}^{N}\left\|\sum_{n=1}^{N}\overline{u_k(n)}\alpha(n)x_0\right\|^2 \\
                &=\sum_{k=1}^{N}\left|\sum_{n=1}^{N}\overline{u_k(n)}\alpha(n)\right|^2=\sum_{k=1}^{N}|\hat{\alpha}(k)|^2=\sum_{n=1}^{N}|\alpha(n)|^2=\sum_{n=1}^{N}\|\alpha(n)x_0\|^2 \\
                &=\sum_{n=1}^{N}\|f(n)\|^2=\|f\|_2^2.
            \end{align*}
            So, we have $\|\mathcal{F}\|_{2\to 2}\geq\|\hat{f}\|_2/\|f\|_2=1$. The inequality $\|\mathcal{F}\|_{2\to 2}\leq N^{1/2}$ is given by Theorem \ref{Continuity of Fourier transform 3}.
            \item When $N=1$, the orthonormal basis consists of a single vector $u\in\mathbb{C}$ such that $|u|=1$ (i.e., $u\in\mathbb{S}^1$). For a signal $f\in X$, the Fourier transform is given by the scalar multiplication, $\hat{f}=uf$. Using the definition of the $L^2$-norm for $N=1$, we have:
            \begin{equation*}
                \|\hat{f}\|_2=\|uf\|=|u|\|f\|=\|f\|_2.
            \end{equation*}
            This equality shows that $\mathcal{F}$ is an isometric isomorphism, and $\|\mathcal{F}\|_{2\to 2}=1$.
            \item The left implication holds by Plancherel's equality. So, we assume that $\|\hat{f}\|_2=\|f\|_2$ for all $U\in\mathrm{U}(N,\mathbb{C})$ and $f\in X^N$, and we show that $X$ is a Hilbert space. By Jordan–von Neumann Theorem (\cite{Jordan}), it is sufficient to show that the norm on $X$ satisfies the parallelogram law:
            \begin{equation*}
                \|x+y\|^2+\|x-y\|^2=2\|x\|^2+2\|y\|^2 \quad \text{for all } x,y\in X.
            \end{equation*}
            Thus, let $x,y\in X$ and consider the signal $f:=(x,y,0,\dots,0)\in X^N$, such that $\|f\|_2^2=\|x\|^2+\|y\|^2$. Also, consider the unitary matrix $U\in\mathrm{U}(N,\mathbb{C})$ as follows:
            \begin{equation*}
                U:=\frac{1}{\sqrt{2}}
                \begin{pmatrix*}[r]
                    1 & 1 & \mathbf{0} \\
                    1 & -1 & \mathbf{0} \\
                    \mathbf{0} & \mathbf{0} & I
                \end{pmatrix*}, \quad \text{implying} \quad \hat{f}=\frac{1}{\sqrt{2}}(x+y,x-y,0,\dots,0).
            \end{equation*}
            Hence, the equality $\|\hat{f}\|_2=\|f\|_2$ is exactly the parallelogram law for the norm on $X$. Consequently, $X$ is a Hilbert space.
            \item The left implication holds by Plancherel's equality. We suppose that $\|\mathcal{F}\|_{2\to 2}=1$ for all $U\in\mathrm{U}(N,\mathbb{C})$, and we show that $X$ is a Hilbert space. Fix an arbitrary unitary matrix $U\in\mathrm{U}(N,\mathbb{C})$. Since $U^*$ is also a unitary matrix, we obtain that $\|\mathcal{F}^{-1}\|_{2\to 2}=1$. Hence, for all $f\in X^N$,
            \begin{equation*}
                \|\hat{f}\|_2\leq\|f\|_2, \quad \|f\|_2\leq\|\hat{f}\|_2.
            \end{equation*}
            Thus, $\|\hat{f}\|_2=\|f\|_2$ for all $f\in X^N$ and, by part (c), $X$ is a Hilbert space. \qedhere
        \end{enumerate}
    \end{proof}
    
	\textbf{Remark:} The bounds proved in Theorem \ref{Characterization of Plancherel}(a) are sharp. Indeed, $\|\mathcal{F}\|_{2\to 2}=1$ holds for Hilbert spaces, while the bound $\|\mathcal{F}\|_{2\to 2}=N^{1/2}$ holds for certain  Banach spaces, as shown in the previous example.

    \section{Uncertainty principles on graphs}
    In this section, we discuss the validity of lower bounds for products of $L^p$ norms of signals, which we interpret as uncertainty principles.
    
    The core idea behind uncertainty principles, whether in physics, signal processing, or mathematics, is a fundamental trade-off between the localization of a signal and the localization of its Fourier transform. In harmonic analysis, there are many examples of uncertainty principles, each one stating that a non-zero function $f$ and its Fourier transform $\hat{f}$ cannot both be sharply localized simultaneously. Therefore, if a signal is very short in time, it must contain a wide range of frequencies to build that sharpness.

    In classical time-frequency analysis, a very elementary uncertainty principle can be stated for a signal $f\in L^1(\mathbb{R}^d)$ such that $\hat{f}\in L^1(\mathbb{R}^d)$. In particular, the following holds:
    \begin{equation*}
        \frac{\|f\|_1\|\hat{f}\|_1}{\|f\|_\infty\|\hat{f}\|_\infty}\geq1.
    \end{equation*}
    The ratio $\|f\|_1/\|f\|_\infty$ serves as a proxy for the size of the signal's support (a measure of its non-sparsity).
    
    In the following, we extend this inequality to the graph setting and derive analogous inequalities for graph signals (see \cite{Agaskar,Emmrich,Ricaud,Tsitsvero}).
	
	\begin{Theorem} \label{Uncertainty principle 1}
        Fix an orthonormal basis $\mathcal{B}=\lbrace u_1,\dots,u_N\rbrace$ of $\mathbb{K}^N$, a Banach space $X$, and $f\in X^N-\lbrace0\rbrace$. Then, for $p\in[1,\infty]$,
        \begin{equation*}
            \frac{\|f\|_p\|\hat{f}\|_p}{\|f\|_\infty\|\hat{f}\|_\infty}\geq\frac{1}{\kappa_{p'}(U)\kappa_{p'}(U^*)},
        \end{equation*}
        where $p'$ is the conjugate exponent of $p$. In particular, the following estimates hold:
        \begin{equation*}
            \frac{\|f\|_1\|\hat{f}\|_1}{\|f\|_\infty\|\hat{f}\|_\infty}\geq\frac{1}{\kappa(U)^2}, \qquad \frac{\|f\|_2\|\hat{f}\|_2}{\|f\|_\infty\|\hat{f}\|_\infty}\geq1.
        \end{equation*}
    \end{Theorem}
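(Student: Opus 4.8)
The plan is to obtain the inequality directly by combining the two operator-norm bounds already established in Theorem \ref{Continuity of Fourier transform 1}. For $p\in[1,\infty]$ with conjugate exponent $p'$, that theorem gives, for every $f\in X^N$,
\begin{equation*}
    \|\hat{f}\|_\infty\leq\kappa_{p'}(U)\|f\|_p, \qquad \|f\|_\infty\leq\kappa_{p'}(U^*)\|\hat{f}\|_p.
\end{equation*}
First I would multiply these two inequalities side by side to get
\begin{equation*}
    \|f\|_\infty\|\hat{f}\|_\infty\leq\kappa_{p'}(U)\kappa_{p'}(U^*)\,\|f\|_p\|\hat{f}\|_p.
\end{equation*}
Then, since $f\neq0$ and a norm is positive-definite, $\|f\|_\infty>0$; moreover $\mathcal{F}$ is a topological isomorphism (again by Theorem \ref{Continuity of Fourier transform 1}, or simply because $\hat{f}=U^*f$ with $U^*$ invertible), so $\hat{f}\neq0$ and hence $\|\hat{f}\|_\infty>0$. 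This legitimizes dividing both sides by $\|f\|_\infty\|\hat{f}\|_\infty$, yielding the claimed bound
\begin{equation*}
    \frac{\|f\|_p\|\hat{f}\|_p}{\|f\|_\infty\|\hat{f}\|_\infty}\geq\frac{1}{\kappa_{p'}(U)\kappa_{p'}(U^*)}.
\end{equation*}

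For the two particular cases I would specialize $p$. Taking $p=1$, so $p'=\infty$, gives $\kappa_{p'}(U)=\kappa_\infty(U)=\kappa(U)$, and likewise $\kappa_\infty(U^*)=\kappa(U^*)$; since the entries of $U^*$ are the conjugates (and transpose) of the entries of $U$, one has $\kappa(U^*)=\|U^*\|_\infty=\|U\|_\infty=\kappa(U)$, so the right-hand side becomes $1/\kappa(U)^2$. Taking $p=2$, so $p'=2$, I would invoke the already-noted fact that $\kappa_2(U)=\kappa_2(U^*)=1$ (each row and column of a unitary matrix has Euclidean norm $1$), which collapses the bound to the dimension-free inequality $\|f\|_2\|\hat{f}\|_2\geq\|f\|_\infty\|\hat{f}\|_\infty$.

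There is essentially no serious obstacle here: the entire content is the multiplicative combination of two a priori estimates, plus the observation that the denominators are nonzero for $f\neq0$. The only point requiring a line of care is the identity $\kappa(U^*)=\kappa(U)$ (and more generally that $\kappa_{p'}(U^*)$ is the relevant quantity for the inverse transform, which is exactly how it appears in Theorem \ref{Continuity of Fourier transform 1}); everything else is routine. If desired, one could also remark that the inequality is sharp whenever both bounds in Theorem \ref{Continuity of Fourier transform 1} can be saturated simultaneously by a single signal, but this is not needed for the statement as given.
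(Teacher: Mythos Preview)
Your proposal is correct and follows essentially the same route as the paper's proof: multiply the two estimates $\|\hat{f}\|_\infty\leq\kappa_{p'}(U)\|f\|_p$ and $\|f\|_\infty\leq\kappa_{p'}(U^*)\|\hat{f}\|_p$ from Theorem~\ref{Continuity of Fourier transform 1} and divide through. Your extra remarks justifying the nonvanishing of the denominators and the identity $\kappa(U^*)=\kappa(U)$ are fine additions but not strictly needed for the argument.
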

	\begin{proof}
        By Theorem \ref{Continuity of Fourier transform 1}, we know that
        \begin{equation*}
            \|\hat{f}\|_\infty\leq\kappa_{p'}(U)\|f\|_p, \quad \|f\|_\infty\leq\kappa_{p'}(U^*)\|\hat{f}\|_p.
        \end{equation*}
        Thus, $\|f\|_\infty\|\hat{f}\|_\infty\leq\kappa_{p'}(U)\kappa_{p'}(U^*)\|f\|_p\|\hat{f}\|_p$, and so
        \begin{equation*}
            \frac{\|f\|_p\|\hat{f}\|_p}{\|f\|_\infty\|\hat{f}\|_\infty}\geq\frac{1}{\kappa_{p'}(U)\kappa_{p'}(U^*)}. \qedhere
        \end{equation*}
	\end{proof}
    
	\begin{Theorem} \label{Uncertainty principle 2}
        Fix an orthonormal basis $\mathcal{B}=\lbrace u_1,\dots,u_N\rbrace$ of $\mathbb{K}^N$, a Banach space $X$, and $f\in X^N-\lbrace0\rbrace$. Then, for $q\in[1,\infty]$,
        \begin{equation*}
            \frac{\|f\|_1\|\hat{f}\|_1}{\|f\|_q\|\hat{f}\|_q}\geq\frac{1}{\kappa_q(U)\kappa_q(U^*)}.
        \end{equation*}
        In particular, the following estimates hold:
        \begin{equation*}
            \frac{\|f\|_1\|\hat{f}\|_1}{\|f\|_\infty\|\hat{f}\|_\infty}\geq\frac{1}{\kappa(U)^2}, \qquad \frac{\|f\|_1\|\hat{f}\|_1}{\|f\|_2\|\hat{f}\|_2}\geq1.
        \end{equation*}
    \end{Theorem}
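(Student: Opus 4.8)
The plan is to derive this immediately from the operator norm bounds already established in Theorem \ref{Continuity of Fourier transform 2}, exactly as Theorem \ref{Uncertainty principle 1} was derived from Theorem \ref{Continuity of Fourier transform 1}. First I would invoke Theorem \ref{Continuity of Fourier transform 2} for the fixed $q\in[1,\infty]$, which gives the two inequalities
\begin{equation*}
    \|\hat{f}\|_q\leq\kappa_q(U^*)\|f\|_1, \qquad \|f\|_q\leq\kappa_q(U)\|\hat{f}\|_1.
\end{equation*}
Multiplying these together yields $\|f\|_q\|\hat{f}\|_q\leq\kappa_q(U)\kappa_q(U^*)\|f\|_1\|\hat{f}\|_1$. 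Since $\mathcal{F}$ is a topological isomorphism, $f\neq0$ forces $\hat{f}\neq0$, so all four norms in the statement are strictly positive and we may divide, obtaining
\begin{equation*}
    \frac{\|f\|_1\|\hat{f}\|_1}{\|f\|_q\|\hat{f}\|_q}\geq\frac{1}{\kappa_q(U)\kappa_q(U^*)}.
\end{equation*}

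For the two special cases I would simply substitute the known values of the $q$-coherence. Taking $q=\infty$ and using $\kappa_\infty(U)=\kappa(U)$, together with the observation that $U^*$ has the same entry-wise modulus as $U$ (just conjugated and transposed), so that $\kappa(U^*)=\kappa(U)$, gives the first estimate. Taking $q=2$ and using $\kappa_2(U)=\kappa_2(U^*)=1$ (recorded in the Preliminaries) gives the second estimate, $\|f\|_1\|\hat{f}\|_1\geq\|f\|_2\|\hat{f}\|_2$.

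There is essentially no obstacle here: the content is entirely in Theorem \ref{Continuity of Fourier transform 2}, and this statement is a formal corollary. The only point requiring a word of care is the nonvanishing of $\hat{f}$, which justifies passing from the product inequality to the quotient inequality; this is immediate from invertibility of $\mathcal{F}$. One could also note that, as with Theorem \ref{Uncertainty principle 1}, the inequality should be read as a graph uncertainty principle: the ratio $\|f\|_1/\|f\|_q$ (resp.\ $\|\hat{f}\|_1/\|\hat{f}\|_q$) measures the spread of $f$ (resp.\ of $\hat{f}$), and the bound forces these two spreads not to be simultaneously small, with the obstruction controlled by the coherence of the chosen basis.
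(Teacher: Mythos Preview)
Your proposal is correct and matches the paper's proof essentially line for line: invoke the two inequalities from Theorem \ref{Continuity of Fourier transform 2}, multiply, and divide. Your added remarks on why the division is legitimate and on reading off the special cases $q=\infty$ and $q=2$ are sound and slightly more explicit than the paper, but the argument is the same.
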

	\begin{proof}
        By Theorem \ref{Continuity of Fourier transform 2}, we know that
        \begin{equation*}
            \|\hat{f}\|_q\leq\kappa_q(U^*)\|f\|_1, \quad \|f\|_q\leq\kappa_q(U)\|\hat{f}\|_1.
        \end{equation*}
        Thus, $\|f\|_q\|\hat{f}\|_q\leq\kappa_q(U)\kappa_q(U^*)\|f\|_1\|\hat{f}\|_1$, and so
        \begin{equation*}
            \frac{\|f\|_1\|\hat{f}\|_1}{\|f\|_q\|\hat{f}\|_q}\geq\frac{1}{\kappa_q(U)\kappa_q(U^*)}. \qedhere
        \end{equation*}
	\end{proof}
    
	\begin{Theorem} \label{Uncertainty principle 3}
        Fix an orthonormal basis $\mathcal{B}=\lbrace u_1,\dots,u_N\rbrace$ of $\mathbb{K}^N$, a Banach space $X$, and $f\in X^N-\lbrace0\rbrace$. Then, for $p,q\in[1,\infty]$,
        \begin{equation*}
            \frac{\|f\|_p\|\hat{f}\|_p}{\|f\|_q\|\hat{f}\|_q}\geq\frac{1}{\|U\|_{p',q}\|U^*\|_{p',q}},
        \end{equation*}
        where $p'$ is the conjugate exponent of $p$.
    \end{Theorem}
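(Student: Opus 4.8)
The plan is to follow the same template used for Theorems~\ref{Uncertainty principle 1} and~\ref{Uncertainty principle 2}, obtaining the lower bound as an immediate corollary of the operator-norm estimates in Theorem~\ref{Continuity of Fourier transform 3}. First I would observe that, since $\mathcal{F}$ is a bijection on $X^N$ (by the inversion formula), the hypothesis $f\neq0$ forces $\hat{f}\neq0$ as well; hence all four quantities $\|f\|_p$, $\|\hat{f}\|_p$, $\|f\|_q$, $\|\hat{f}\|_q$ are strictly positive and the ratio appearing in the statement is well defined.

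Then I would simply combine the two inequalities furnished by Theorem~\ref{Continuity of Fourier transform 3}, namely $\|\hat{f}\|_q\leq\|U\|_{p',q}\|f\|_p$ and $\|f\|_q\leq\|U^*\|_{p',q}\|\hat{f}\|_p$. Multiplying them gives
\begin{equation*}
    \|f\|_q\|\hat{f}\|_q\leq\|U\|_{p',q}\,\|U^*\|_{p',q}\,\|f\|_p\|\hat{f}\|_p,
\end{equation*}
and dividing both sides by the positive quantity $\|f\|_q\|\hat{f}\|_q\,\|U\|_{p',q}\,\|U^*\|_{p',q}$ yields exactly the asserted estimate.

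There is no genuine obstacle: the argument is a one-line consequence of Theorem~\ref{Continuity of Fourier transform 3}, in the same way that Theorems~\ref{Uncertainty principle 1} and~\ref{Uncertainty principle 2} follow from Theorems~\ref{Continuity of Fourier transform 1} and~\ref{Continuity of Fourier transform 2}. The only caveat I would flag, preferably in an accompanying remark rather than inside the proof, is that, unlike the two earlier uncertainty principles, this bound need not be sharp in general, since the underlying estimate $\|\mathcal{F}\|_{p\to q}\leq\|U\|_{p',q}$ is itself generally not sharp, as illustrated by the identity-matrix and $N=2$ Hadamard examples discussed in the remarks preceding Theorem~\ref{Characterization of Plancherel}.
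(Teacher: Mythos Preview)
Your proposal is correct and follows essentially the same approach as the paper: multiply the two bounds from Theorem~\ref{Continuity of Fourier transform 3} and rearrange. Your added justification that $f\neq0$ implies $\hat{f}\neq0$ (so the ratio is well defined) and your remark on non-sharpness are sensible refinements, but the core argument is identical.
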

	\begin{proof}
        By Theorem \ref{Continuity of Fourier transform 3}, we know that
        \begin{equation*}
            \|\hat{f}\|_q\leq\|U\|_{p',q}\|f\|_p, \quad \|f\|_q\leq\|U^*\|_{p',q}\|\hat{f}\|_p,
        \end{equation*}
        Thus, $\|f\|_q\|\hat{f}\|_q\leq\|U\|_{p',q}\|U^*\|_{p',q}\|f\|_p\|\hat{f}\|_p$, and so
        \begin{equation*}
            \frac{\|f\|_p\|\hat{f}\|_p}{\|f\|_q\|\hat{f}\|_q}\geq\frac{1}{\|U\|_{p',q}\|U^*\|_{p',q}}. \qedhere
        \end{equation*}
	\end{proof}

    \section{Convolution of vector-valued graph signals}
    In this section, we define the convolution operator for vector-valued graph signals via the graph Fourier transform, and investigate its fundamental properties. In the classical setting, Young's inequality establishes the continuity of the convolution as a bilinear operator between $L^p$ spaces. We demonstrate that a similar result holds in the graph setting, with the operator bound depending on the geometry of the graph through the entries of the orthonormal basis.

    \subsection{Basic algebraic properties of convolution}
    In classical time-frequency analysis, the convolution operator is characterized by its relationship with the Fourier transform. In particular, the Fourier transform of the convolution of two signals is the point-wise product of their respective Fourier transforms.

    In order to maintain this fundamental property valid, Shuman et al. \cite{Shuman_2} defined the convolution operator for graph signals as the inverse Fourier transform of the point-wise product of their Fourier transform, giving rise to the following definition, which we extend to the setting of vector-valued graph signals:
	\begin{equation*}
		*:\mathbb{K}^N\times X^N\to X^N,(\alpha,f)\mapsto\alpha*f, \qquad (\alpha*f)(n):=\sum_{k=1}^{N}u_k(n)\hat{\alpha}(k)\hat{f}(k).
	\end{equation*}
    Note that the convolution is an operator between a scalar signal and a vector-valued signal, and so $\alpha*f$ is still a vector-valued signal.
    
    Via the integral formulation introduced, we may write
	\begin{equation*}
		(\alpha*f)(n)=\int_{V}u_k(n)\hat{\alpha}(k)\hat{f}(k)dk.
	\end{equation*}
	
	\begin{Theorem} \label{Properties of convolution}
        Fix an orthonormal basis $\mathcal{B}=\lbrace u_1,\dots,u_N\rbrace$ of $\mathbb{K}^N$, a Banach space $X$, $\alpha,\beta\in\mathbb{K}^N$, and $f\in X^N$.
        \begin{enumerate}[label=(\alph*)]
            \item The map $*:\mathbb{K}^N\times X^N\to X^N,(\alpha,x)\mapsto\alpha*x$ is a bilinear operator.
            \item $\widehat{\alpha*f}=\hat{\alpha}\hat{f}$.
            \item $\alpha*\beta=\beta*\alpha$.
            \item $(\alpha*\beta)*f=\alpha*(\beta*f)$.
            \item If $\varepsilon:=u_1+\cdots+u_N$, then $\varepsilon*f=f$.
        \end{enumerate}
    \end{Theorem}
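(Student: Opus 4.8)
The plan is to establish the five items in order, relying throughout only on the column orthonormality relation $\sum_{n=1}^{N}u_h(n)\overline{u_k(n)}=\delta_{hk}$, the inversion formula $f(n)=\sum_{k=1}^{N}u_k(n)\hat f(k)$, and the already-established fact that $\mathcal F$ is a bijection on $X^N$, so that two signals with equal Fourier transforms coincide.

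For (a), I would note that $\alpha\mapsto\hat\alpha$ and $f\mapsto\hat f$ are linear directly from their defining sums, and that for each fixed vertex $n$ the map $\big((c_k)_k,(x_k)_k\big)\mapsto\sum_{k}u_k(n)c_kx_k$ is bilinear from $\mathbb K^N\times X^N$ to $X$; composing these yields bilinearity of $*$. This is entirely routine.

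Item (b) is the linchpin. I compute $\widehat{\alpha*f}(k)=\sum_{n}\overline{u_k(n)}(\alpha*f)(n)=\sum_{n}\overline{u_k(n)}\sum_{h}u_h(n)\hat\alpha(h)\hat f(h)$, interchange the two finite sums, pull out the scalar–vector product $\hat\alpha(h)\hat f(h)$, and collapse $\sum_{n}\overline{u_k(n)}u_h(n)=\delta_{hk}$ to obtain $\hat\alpha(k)\hat f(k)$. The one point that needs care is invoking the orthonormality relation summed over the vertex index $n$, rather than the one summed over the frequency index $k$.

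The remaining items follow quickly. For (c), $(\alpha*\beta)(n)$ and $(\beta*\alpha)(n)$ are literally the same finite sum once one observes that the scalars $\hat\alpha(k)$ and $\hat\beta(k)$ commute. For (d), apply (b) twice: $\widehat{(\alpha*\beta)*f}=\widehat{\alpha*\beta}\,\hat f=\hat\alpha\hat\beta\hat f=\hat\alpha\,\widehat{\beta*f}=\widehat{\alpha*(\beta*f)}$, and conclude by injectivity of $\mathcal F$. For (e), I first check that $\hat\varepsilon(k)=\sum_{n}\overline{u_k(n)}\sum_{h}u_h(n)=\sum_{h}\delta_{hk}=1$ for every $k$ (equivalently, $\varepsilon$ is the inverse Fourier transform of the all-ones signal), so that $(\varepsilon*f)(n)=\sum_{k}u_k(n)\hat f(k)=f(n)$ by the inversion formula. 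I do not anticipate a genuine obstacle: everything reduces to the two orthonormality identities and the bijectivity of $\mathcal F$, the latter being exactly what legitimizes cancelling Fourier transforms in (d).
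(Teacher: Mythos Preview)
Your proposal is correct and essentially mirrors the paper's proof: parts (a), (b), (c), and (e) are argued in the same way, via direct computation and the column orthonormality relation. The only cosmetic difference is in (d): the paper expands $((\alpha*\beta)*f)(n)$ directly in the vertex domain using (b) to rewrite $\widehat{\alpha*\beta}$ and $\widehat{\beta*f}$, whereas you take Fourier transforms of both sides and invoke injectivity of $\mathcal{F}$; both routes rest on (b) and are equally valid.
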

	\begin{proof}
        We prove the five properties separately.
        \begin{enumerate}[label=(\alph*), leftmargin=*, widest=a, font=\bfseries]
            \item For all $n\in\lbrace1,\dots,N\rbrace$,
            \begin{align*}
				(\alpha*(\lambda f+\mu g))(n)&=\sum_{k=1}^{N}u_k(n)\hat{\alpha}(k)\widehat{\lambda f+\mu g}(k)=\sum_{k=1}^{N}u_k(n)\hat{\alpha}(k)(\lambda\hat{f}(k)+\mu\hat{g}(k)) \\
				&=\lambda\sum_{k=1}^{N}u_k(n)\hat{\alpha}(k)\hat{f}(k)+\mu\sum_{k=1}^{N}u_k(n)\hat{\alpha}(k)\hat{g}(k) \\
				&=\lambda(\alpha*f)(n)+\mu(\alpha*g)(n)=(\lambda(\alpha*f)+\mu(\alpha*g))(n),
			\end{align*}
			and
			\begin{align*}
				((\lambda\alpha+\mu\beta)*f)(n)&=\sum_{k=1}^{N}u_k(n)\widehat{\lambda\alpha+\mu\beta}(k)\hat{f}(k)=\sum_{k=1}^{N}u_k(n)(\lambda\hat{\alpha}(k)+\mu\hat{\beta}(k))\hat{f}(k) \\
				&=\lambda\sum_{k=1}^{N}u_k(n)\hat{\alpha}(k)\hat{f}(k)+\mu\sum_{k=1}^{N}u_k(n)\hat{\beta}(k)\hat{f}(k) \\
				&=\lambda(\alpha*f)(n)+\mu(\beta*f)(n)=(\lambda(\alpha*f)+\mu(\beta*f))(n).
			\end{align*}
			\item For all $k\in\lbrace1,\dots,N\rbrace$, by the orthonormality relations,
			\begin{align*}
				\widehat{\alpha*f}(k)&=\sum_{n=1}^{N}\overline{u_k(n)}(\alpha*f)(n)=\sum_{n=1}^{N}\overline{u_k(n)}\sum_{h=1}^{N}u_h(n)\hat{\alpha}(h)\hat{f}(h) \\
				&=\sum_{h=1}^{N}\sum_{n=1}^{N}u_h(n)\overline{u_k(n)}\hat{\alpha}(h)\hat{f}(h)=\sum_{h=1}^{N}\delta_{hk}\hat{\alpha}(h)\hat{f}(h)=\hat{\alpha}(k)\hat{f}(k).
			\end{align*}
			\item For all $n\in\lbrace1,\dots,N\rbrace$,
			\begin{equation*}
				(\alpha*\beta)(n)=\sum_{k=1}^{N}u_k(n)\hat{\alpha}(k)\hat{\beta}(k)=\sum_{k=1}^{N}u_k(n)\hat{\beta}(k)\hat{\alpha}(k)=(\beta*\alpha)(n).
			\end{equation*}
			\item By part (b), for all $n\in\lbrace1,\dots,N\rbrace$,
			\begin{align*}
				((\alpha*\beta)*f)(n)&=\sum_{k=1}^{N}u_k(n)\widehat{\alpha*\beta}(k)\hat{f}(k)=\sum_{k=1}^{N}u_k(n)(\hat{\alpha}(k)\hat{\beta}(k))\hat{f}(k) \\
				&=\sum_{k=1}^{N}u_k(n)\hat{\alpha}(k)(\hat{\beta}(k)\hat{f}(k))=\sum_{k=1}^{N}u_k(n)\hat{\alpha}(k)\widehat{\beta*f}(k) \\
                &=(\alpha*(\beta*f))(n).
			\end{align*}
			\item For all $k\in\lbrace1,\dots,N\rbrace$, by the orthonormality relations,
			\begin{equation*}
				\hat{\varepsilon}(k)=\sum_{n=1}^{N}\overline{u_k(n)}\varepsilon(n)=\sum_{n=1}^{N}\sum_{h=1}^{N}\overline{u_k(n)}u_h(n)=\sum_{h=1}^{N}\sum_{n=1}^{N}u_h(n)\overline{u_k(n)}=\sum_{h=1}^{N}\delta_{hk}=1,
			\end{equation*}
			and so, for all $n\in\lbrace1,\dots,N\rbrace$,
			\begin{align*}
				(\varepsilon*f)(n)&=\sum_{k=1}^{N}u_k(n)\hat{\varepsilon}(k)\hat{f}(k)=\sum_{k=1}^{N}u_k(n)\hat{f}(k)=f(n). \qedhere
			\end{align*}
        \end{enumerate}
	\end{proof}
    
	\textbf{Remark:} Part (e) of Theorem \ref{Properties of convolution} states that the scalar signal $\varepsilon:=u_1+\cdots+u_N$ is the "identity" of the convolution operator. In particular, note that $\hat{\varepsilon}$ is the constant function $1$, as $\hat{\varepsilon}(k)=1$ for all $k\in\lbrace1,\dots,N\rbrace$.
    
    \subsection{Young's inequality for graph signals}
    One of the core results involving the convolution operator in classical harmonic analysis is Young's inequality. It states that, for signals $f\in L^p(\mathbb{R}^d)$ and $g\in L^q(\mathbb{R}^d)$, the convolution $f*g$ belongs to $L^r(\mathbb{R}^d)$ and satisfies
    \begin{equation*}
        \|f*g\|_r\leq\|f\|_p\|g\|_q,
    \end{equation*}
    provided that $p$, $q$, $r$ satisfy the relation
    \begin{equation*}
        \frac{1}{p}+\frac{1}{q}=1+\frac{1}{r}.
    \end{equation*}
    The following theorem establishes a similar inequality in the graph setting.
	
	\begin{Theorem} \label{Young's inequality}
        Fix an orthonormal basis $\mathcal{B}=\lbrace u_1,\dots,u_N\rbrace$ of $\mathbb{K}^N$, a Banach space $X$, $\alpha\in\mathbb{K}^N$, and $f\in X^N$. Then, for $s,t,w\in[1,\infty]$ such that
        \begin{equation*}
            \frac{1}{s}+\frac{1}{t}+\frac{1}{w}=1,
        \end{equation*}
        and, for $p,q,r\in[1,\infty]$,
		\begin{equation*}
			\|\alpha*f\|_r\leq\|U^*\|_{s,r}\|U\|_{p',t}\|U\|_{q',w}\|\alpha\|_p\|f\|_q.
		\end{equation*}
		Consequently, $*:L^p(V,\mathbb{K})\times L^q(V,X)\to L^r(V,X)$ is a continuous bilinear operator, and
		\begin{equation*}
			\|*\|\leq\min\left\lbrace\|U^*\|_{s,r}\|U\|_{p',t}\|U\|_{q',w} \ \middle| \ \frac{1}{s}+\frac{1}{t}+\frac{1}{w}=1\right\rbrace.
		\end{equation*}
    \end{Theorem}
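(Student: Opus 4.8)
The plan is to reduce the whole estimate to two applications of Theorem~\ref{Continuity of Fourier transform 3}, joined by a single Hölder inequality in the spectral variable. The starting observation is that, directly from the definition of the convolution,
\[
(\alpha*f)(n)=\sum_{k=1}^{N}u_k(n)\hat{\alpha}(k)\hat{f}(k)
\]
is precisely the inverse graph Fourier transform of the $X$-valued spectral product $k\mapsto\hat{\alpha}(k)\hat{f}(k)$; that is, $\alpha*f=\mathcal{F}^{-1}(\hat{\alpha}\hat{f})$. (Equivalently, this follows from Theorem~\ref{Properties of convolution}(b) together with the invertibility of $\mathcal{F}$.) Thus the size of $\alpha*f$ in $L^r$ is controlled by the size of $\hat{\alpha}\hat{f}$ in a suitable $L^{s'}$, and the latter splits multiplicatively over $\hat{\alpha}$ and $\hat{f}$.

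In detail, fix admissible exponents $s,t,w\in[1,\infty]$ with $1/s+1/t+1/w=1$, so that $1/t+1/w=1-1/s=1/s'$. First I apply the second inequality of Theorem~\ref{Continuity of Fourier transform 3} to the signal $\hat{\alpha}\hat{f}$, with its exponent $q$ replaced by $r$ and its exponent $p$ replaced by $s'$, obtaining
\[
\|\alpha*f\|_r=\|\mathcal{F}^{-1}(\hat{\alpha}\hat{f})\|_r\leq\|U^*\|_{s,r}\,\|\hat{\alpha}\hat{f}\|_{s'}.
\]
Next I estimate the spectral product by Hölder's inequality for the counting measure on $V$, using $1/s'=1/t+1/w$:
\[
\|\hat{\alpha}\hat{f}\|_{s'}=\left(\sum_{k=1}^{N}|\hat{\alpha}(k)|^{s'}\|\hat{f}(k)\|^{s'}\right)^{1/s'}\leq\|\hat{\alpha}\|_t\,\|\hat{f}\|_w,
\]
with the usual modifications in the endpoint cases (for instance $s'=\infty$ forces $t=w=\infty$). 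Finally I invoke Theorem~\ref{Continuity of Fourier transform 3} once more, separately for the scalar signal $\alpha$ (taking $X=\mathbb{K}$, which is legitimate since that theorem is stated for an arbitrary Banach space) and for the $X$-valued signal $f$, to get $\|\hat{\alpha}\|_t\leq\|U\|_{p',t}\|\alpha\|_p$ and $\|\hat{f}\|_w\leq\|U\|_{q',w}\|f\|_q$. Chaining these inequalities yields
\[
\|\alpha*f\|_r\leq\|U^*\|_{s,r}\|U\|_{p',t}\|U\|_{q',w}\|\alpha\|_p\|f\|_q.
\]
Since this holds for every admissible triple $(s,t,w)$, and since $*$ is bilinear by Theorem~\ref{Properties of convolution}(a), the defining property of the norm of a bounded bilinear map gives the continuity of $*:L^p(V,\mathbb{K})\times L^q(V,X)\to L^r(V,X)$ together with the estimate $\|*\|\leq\min\{\|U^*\|_{s,r}\|U\|_{p',t}\|U\|_{q',w}\mid 1/s+1/t+1/w=1\}$.

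No step here is genuinely deep; the difficulty is purely bookkeeping. The one point demanding attention is matching the indices of the mixed norms: the outer application of Theorem~\ref{Continuity of Fourier transform 3} must be set up so that it produces exactly $\|U^*\|_{s,r}$ (inner exponent $s$ over the row index, outer exponent $r$ over the column index), while the two inner applications must produce $\|U\|_{p',t}$ and $\|U\|_{q',w}$ (inner exponents $p',q'$ over the row index, outer exponents $t,w$ over the column index); keeping the primes and the order of summation straight is the only subtle spot. The secondary chore is to spell out the degenerate Hölder cases $s\in\{1,\infty\}$ and $t$ or $w$ equal to $\infty$, where the mixed-norm definitions and Hölder's inequality take their limiting forms; these are routine but should be stated explicitly.
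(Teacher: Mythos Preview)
Your proof is correct and follows essentially the same route as the paper: apply the second estimate of Theorem~\ref{Continuity of Fourier transform 3} to $\alpha*f$ (using $\widehat{\alpha*f}=\hat{\alpha}\hat{f}$), split $\|\hat{\alpha}\hat{f}\|_{s'}$ by the generalized H\"older inequality with $1/s'=1/t+1/w$, and then apply the first estimate of Theorem~\ref{Continuity of Fourier transform 3} separately to $\alpha$ and $f$. The paper's proof is the same chain of inequalities, just written in one compressed line.
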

	\begin{proof}
		By Theorems \ref{Continuity of Fourier transform 3} and \ref{Properties of convolution}, and, by generalized Hölder's inequality,
        \begin{align*}
            \|\alpha*f\|_r&\leq\|U^*\|_{s,r}\|\widehat{\alpha*f}\|_{s'}=\|U^*\|_{s,r}\|\hat{\alpha}\hat{f}\|_{s'}\leq\|U^*\|_{s,r}\|\hat{\alpha}\|_t\|\hat{f}\|_w \\
            &\leq\|U^*\|_{s,r}\|U\|_{p',t}\|U\|_{q',w}\|\alpha\|_p\|f\|_q. \qedhere
        \end{align*}
	\end{proof}

    \section{Translation of vector-valued graph signals}
    In this section, we define the translation operator for graph signals, and investigate its analytical properties, such as continuity, and its algebraic properties, such as invertibility. We conclude by analyzing the action of the translation operator in the specific case where $X$ is a Hilbert space.
    
    The translation operator is one of the fundamental operators that can be defined on a vector space $X$, and its definition relies on the underlying algebraic structure. In classical time-frequency analysis, it is common to extend the translation to $L^p$ spaces and view its action as a linear operator from $L^p$ to itself, yielding the classical definition for $u\in\mathbb{R}^d$:
    \begin{equation*}
        T_u:L^p(\mathbb{R}^d)\to L^p(\mathbb{R}^d),f\mapsto T_uf, \qquad T_uf(x):=f(x-u).
    \end{equation*}
    In the distributional sense, the translation operator satisfies the following equality:
    \begin{equation*}
        T_uf(x)=(\delta_u*f)(x)=\int_{\mathbb{R}^d}e^{2\pi i\langle x,\xi\rangle}e^{-2\pi i\langle u,\xi\rangle}\hat{f}(\xi)d\xi,
    \end{equation*}
    where $\delta_u$ is the delta distribution centered at $u$.

    This spectral characterization is particularly useful in the graph setting, where the lack of a group structure prevents a direct definition via spatial shifts. Hence, we introduce the delta signal $\delta_m\in\mathbb{K}^N$ centered at vertex $m$:
    \begin{equation*}
		\delta_m(n):=
		\begin{cases}
			1 & \text{if } n=m,\\
			0 & \text{if } n\neq m.
		\end{cases}
	\end{equation*}
	Notice that, for all $k\in\lbrace1,\dots,N\rbrace$,
	\begin{equation*}
		\widehat{\delta_m}(k)=\sum_{n=1}^{N}\overline{u_k(n)}\delta_m(n)=\sum_{n=1}^{N}\overline{u_k(n)}\delta_{nm}=\overline{u_k(m)}.
	\end{equation*}

    \subsection{Translation of graph signals and main properties}
    Following \cite{Shuman_1}, the translation operator centered at vertex $m$ for graph signals is defined as the convolution between the delta signal $\delta_m$ and the signal $f$. We extend this to the setting of vector-valued graph signals as follows:
    \begin{equation*}
		T_m:X^N\to X^N,f\mapsto T_mf:=\delta_m*f, \qquad T_mf(n):=\sum_{k=1}^{N}u_k(n)\overline{u_k(m)}\hat{f}(k).
	\end{equation*}
    Note that $T_mf$ remains a vector-valued signal.
	
	\begin{Theorem} \label{Properties of translation}
        Fix an orthonormal basis $\mathcal{B}=\lbrace u_1,\dots,u_N\rbrace$ of $\mathbb{K}^N$, a Banach space $X$, $\alpha\in\mathbb{K}^N$, and $f\in X^N$.
        \begin{enumerate}[label=(\alph*)]
            \item $\widehat{T_mf}(k)=\overline{u_k(m)}\hat{f}(k)$ for all $k\in\lbrace1,\dots,N\rbrace$.
            \item $T_m(\alpha*f)=(T_m\alpha)*f=\alpha*(T_mf)$.
            \item $T_m\circ T_n=T_n\circ T_m$.
        \end{enumerate}
    \end{Theorem}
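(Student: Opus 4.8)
The plan is to reduce all three statements to the convolution identities of Theorem \ref{Properties of convolution} together with the bijectivity of the graph Fourier transform $\mathcal{F}$ (guaranteed by the inversion formula). For part (a), since $T_m f = \delta_m * f$ by definition, Theorem \ref{Properties of convolution}(b) gives $\widehat{T_m f} = \widehat{\delta_m}\,\hat f$, and combining this with the computation $\widehat{\delta_m}(k) = \overline{u_k(m)}$ recorded just before the statement yields $\widehat{T_m f}(k) = \overline{u_k(m)}\hat f(k)$. Alternatively one can expand the defining sum $T_m f(n) = \sum_{k} u_k(n)\overline{u_k(m)}\hat f(k)$ and apply $\mathcal{F}$ directly, collapsing $\sum_{n} u_h(n)\overline{u_k(n)} = \delta_{hk}$ via the orthonormality relations.

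For part (b) the cleanest route is to apply $\mathcal{F}$ to each of the three signals and compare. Using part (a) and Theorem \ref{Properties of convolution}(b) one gets, for every $k$, that $\widehat{T_m(\alpha*f)}(k) = \overline{u_k(m)}\,\hat\alpha(k)\hat f(k)$, that $\widehat{(T_m\alpha)*f}(k) = \widehat{T_m\alpha}(k)\,\hat f(k) = \overline{u_k(m)}\,\hat\alpha(k)\hat f(k)$, and that $\widehat{\alpha*(T_m f)}(k) = \hat\alpha(k)\,\widehat{T_m f}(k) = \hat\alpha(k)\,\overline{u_k(m)}\hat f(k)$; the three are identical, so by injectivity of $\mathcal{F}$ the three signals agree. (Equivalently, this is nothing but commutativity and associativity of convolution, Theorem \ref{Properties of convolution}(c)--(d), applied to $\delta_m$, $\alpha$, $f$.) For part (c), I would iterate part (a): $\widehat{(T_m\circ T_n)f}(k) = \overline{u_k(m)}\,\widehat{T_n f}(k) = \overline{u_k(m)}\,\overline{u_k(n)}\,\hat f(k)$, an expression symmetric in $m$ and $n$; hence $\widehat{(T_m\circ T_n)f} = \widehat{(T_n\circ T_m)f}$, and applying $\mathcal{F}^{-1}$ gives $T_m\circ T_n = T_n\circ T_m$.

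I do not expect a genuine obstacle: every step is an algebraic manipulation of finite sums, and the only structural inputs are the orthonormality relations, the convolution theorem, and the bijectivity of $\mathcal{F}$. The only mild point requiring care is to invoke the associativity in Theorem \ref{Properties of convolution}(d)---stated for two scalar signals convolved with one vector-valued signal---with the correct slots, namely $\delta_m$ and $\alpha$ as the scalar factors and $f$ as the vector-valued factor; this is bookkeeping rather than a real difficulty, and it is in any case avoided entirely if one argues through $\mathcal{F}$ and its injectivity as above.
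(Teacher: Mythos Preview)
Your proposal is correct and matches the paper's argument in substance: part (a) is identical, and for (b)--(c) the paper works directly with the convolution identities of Theorem~\ref{Properties of convolution}(c)--(d) applied to $\delta_m$, $\alpha$, $f$ (respectively $\delta_m$, $\delta_n$, $f$), which is exactly the alternative you mention in parentheses. Your primary route via comparing Fourier coefficients and invoking injectivity of $\mathcal{F}$ is just the spectral-side restatement of those same identities, so there is no genuine methodological difference.
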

	\begin{proof}
		We prove the three properties separately.
		\begin{enumerate}[label=(\alph*), leftmargin=*, widest=a, font=\bfseries]
            \item By Theorem \ref{Properties of convolution},
            \begin{equation*}
                \widehat{T_mf}(k)=\widehat{\delta_m*f}(k)=\widehat{\delta_m}(k)\hat{f}(k)=\overline{u_k(m)}\hat{f}(k).
            \end{equation*}
			\item By the associativity and commutativity of convolution (Theorem \ref{Properties of convolution}), we have
			\begin{equation*}
				T_m(\alpha*f)=\delta_m*(\alpha*f)=(\delta_m*\alpha)*f=(T_m\alpha)*f,
			\end{equation*}
			and
			\begin{equation*}
                (T_m\alpha)*f=(\delta_m*\alpha)*f=(\alpha*\delta_m)*f=\alpha*(\delta_m*f)=\alpha*(T_mf).
			\end{equation*}
			\item By Theorem \ref{Properties of convolution}, for all $f\in X^N$,
			\begin{align*}
				(T_m\circ T_n)f&=\delta_m*(T_nf)=\delta_m*(\delta_n*f)=(\delta_m*\delta_n)*f=(\delta_n*\delta_m)*f \\
                &=\delta_n*(\delta_m*f)=\delta_n*(T_mf)=(T_n\circ T_m)f. \qedhere
			\end{align*}
		\end{enumerate}
	\end{proof}
    
	\begin{Theorem} \label{Convolution as translation}
       Fix an orthonormal basis $\mathcal{B}=\lbrace u_1,\dots,u_N\rbrace$ of $\mathbb{K}^N$, a Banach space $X$, $\alpha\in\mathbb{K}^N$, and $f\in X^N$. Then, for all $n\in\lbrace1,\dots,N\rbrace$,
		\begin{equation*}
			(\alpha*f)(n)=\sum_{m=1}^{N}\alpha(m)T_mf(n).
		\end{equation*}
    \end{Theorem}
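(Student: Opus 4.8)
The plan is to reduce the identity to the bilinearity of the convolution operator recorded in Theorem \ref{Properties of convolution}(a), together with the elementary decomposition of a scalar signal into delta signals. First I would note that $\alpha=\sum_{m=1}^{N}\alpha(m)\delta_m$ in $\mathbb{K}^N$, since both sides take the value $\alpha(n)$ at every vertex $n$. Fixing $f\in X^N$ and applying the map $\beta\mapsto\beta*f$, which is linear by Theorem \ref{Properties of convolution}(a), yields $\alpha*f=\sum_{m=1}^{N}\alpha(m)(\delta_m*f)=\sum_{m=1}^{N}\alpha(m)T_mf$ by the very definition $T_mf=\delta_m*f$. Evaluating at a vertex $n$ gives the claim.

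Alternatively, I would give a purely computational proof that appeals only to the definitions. Starting from $T_mf(n)=\sum_{k=1}^{N}u_k(n)\overline{u_k(m)}\hat f(k)$, multiply by $\alpha(m)$ and sum over $m$; since all sums are finite we may freely interchange the sums over $m$ and $k$, which isolates the factor $\sum_{m=1}^{N}\overline{u_k(m)}\alpha(m)=\hat\alpha(k)$. What remains is
\begin{equation*}
    \sum_{m=1}^{N}\alpha(m)T_mf(n)=\sum_{k=1}^{N}u_k(n)\hat\alpha(k)\hat f(k)=(\alpha*f)(n),
\end{equation*}
where the last equality is the definition of the convolution. I would present this short computation as the main argument, since it is self-contained, and perhaps record the one-line conceptual version via $\alpha=\sum_m\alpha(m)\delta_m$ as a remark.

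There is essentially no obstacle here: the vertex set is finite, so no convergence, integrability, or measurability issues arise, and the only input beyond the definitions is the already-established identity $\widehat{\delta_m}(k)=\overline{u_k(m)}$ (equivalently, the bilinearity of $*$ and the associativity $\delta_m*(\alpha*f)=(\delta_m*\alpha)*f$). The mild care needed is just bookkeeping of the indices $m$, $n$, $k$ when swapping the order of summation.
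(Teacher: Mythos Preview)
Your proposal is correct, and your computational argument is exactly the paper's proof run in the opposite direction: the paper starts from $(\alpha*f)(n)$, expands $\hat\alpha(k)=\sum_m\overline{u_k(m)}\alpha(m)$, swaps the two finite sums, and recognizes $\sum_k u_k(n)\overline{u_k(m)}\hat f(k)=T_mf(n)$. Your additional one-line conceptual version via $\alpha=\sum_m\alpha(m)\delta_m$ and the bilinearity of $*$ is a pleasant alternative not present in the paper, but the core approach is the same.
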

	\begin{proof}
		By the definition of convolution and the graph Fourier transform, we have
        \begin{align*}
            (\alpha*f)(n)&=\sum_{k=1}^{N}u_k(n)\hat{\alpha}(k)\hat{f}(k)=\sum_{k=1}^{N}u_k(n)\sum_{m=1}^{N}\overline{u_k(m)}\alpha(m)\hat{f}(k) \\
            &=\sum_{m=1}^{N}\alpha(m)\sum_{k=1}^{N}u_k(n)\overline{u_k(m)}\hat{f}(k)=\sum_{m=1}^{N}\alpha(m)(\delta_m*f)(n) \\
            &=\sum_{m=1}^{N}\alpha(m)T_mf(n). \qedhere
        \end{align*}
	\end{proof}
	
	\textbf{Remark:} Via the integral formulation introduced, we may write
	\begin{equation*}
		(\alpha*f)(n)=\int_{V}\alpha(m)T_mf(n)dm.
	\end{equation*}
	This formulation is analogous to the classical definition of the convolution operator on $\mathbb{R}^d$ and, more generally, on locally compact abelian (LCA) groups.
	
	\begin{Theorem} \label{Continuity of translation}
        Fix an orthonormal basis $\mathcal{B}=\lbrace u_1,\dots,u_N\rbrace$ of $\mathbb{K}^N$, a Banach space $X$, and $f\in X^N$. Then, for $s,t,w\in[1,\infty]$ such that
        \begin{equation*}
            \frac{1}{s}+\frac{1}{t}+\frac{1}{w}=1,
        \end{equation*}
        and, for $p,q\in[1,\infty]$,
		\begin{equation*}
			\|T_mf\|_q\leq\|U^*\|_{s,q}\|U\|_{\infty,t}\|U\|_{p',w}\|f\|_p.
		\end{equation*}
		Consequently, $T_m:L^p(V,X)\to L^q(V,X)$ is a continuous linear operator, and
        \begin{equation*}
			\|T_m\|_{p\to q}\leq\min\left\lbrace\|U^*\|_{s,q}\|U\|_{\infty,t}\|U\|_{p',w} \ \middle| \ \frac{1}{s}+\frac{1}{t}+\frac{1}{w}=1\right\rbrace.
		\end{equation*}
    \end{Theorem}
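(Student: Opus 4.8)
The plan is to obtain this estimate as an immediate corollary of Young's inequality (Theorem \ref{Young's inequality}), by specializing the scalar factor to the delta signal $\delta_m$. Recall that $T_m f = \delta_m * f$ by definition, and that $\delta_m$ has a single nonzero entry, equal to $1$, so that $\|\delta_m\|_{p_0} = 1$ for every $p_0 \in [1,\infty]$ and every $m$. The freedom to read $\delta_m$ as an element of $L^1(V,\mathbb{K})$ is precisely what will produce the middle factor $\|U\|_{\infty,t}$.

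Concretely, I would apply Theorem \ref{Young's inequality} with the scalar signal $\alpha := \delta_m$, reading the exponent of $\alpha$ as $1$ (hence its conjugate as $\infty$), keeping the exponent of $f$ as the $p$ of the present statement, and reading the output exponent as $q$. For any admissible triple $(s,t,w)$ with $\tfrac1s+\tfrac1t+\tfrac1w = 1$ this yields
\[
\|T_m f\|_q = \|\delta_m * f\|_q \leq \|U^*\|_{s,q}\,\|U\|_{\infty,t}\,\|U\|_{p',w}\,\|\delta_m\|_1\,\|f\|_p = \|U^*\|_{s,q}\,\|U\|_{\infty,t}\,\|U\|_{p',w}\,\|f\|_p,
\]
which is exactly the claimed inequality.

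For the ``Consequently'' part, linearity of $T_m$ is immediate from the spectral formula $T_m f(n) = \sum_{k} u_k(n)\overline{u_k(m)}\hat f(k)$, since $\mathcal{F}$ is linear and multiplication by the fixed scalars $\overline{u_k(m)}$ is linear; boundedness of $T_m \colon L^p(V,X) \to L^q(V,X)$ then follows from the displayed estimate for any single choice of $(s,t,w)$. Since the bound holds for every such triple, taking the infimum of the constants $\|U^*\|_{s,q}\|U\|_{\infty,t}\|U\|_{p',w}$ over all admissible $(s,t,w)$ gives the stated bound on $\|T_m\|_{p\to q}$.

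There is essentially no obstacle here: the only point requiring a moment's care is checking that the exponents of Theorem \ref{Young's inequality} line up as described — in particular that choosing $\alpha = \delta_m \in L^1$ is what forces the middle factor to become $\|U\|_{\infty,t}$ — together with the trivial observation that $\|\delta_m\|_1 = 1$ independently of $m$.
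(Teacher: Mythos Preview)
Your proposal is correct and follows exactly the paper's approach: the paper's proof consists of the single observation that $\|\delta_m\|_1=1$ together with an application of Theorem~\ref{Young's inequality} with $\alpha=\delta_m$ read in $L^1$, which is precisely what you do.
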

	\begin{proof}
        Notice that $\|\delta_m\|_1=1$. Hence, by Theorem \ref{Young's inequality},
        \begin{equation*}
			\|T_mf\|_q=\|\delta_m*f\|_q\leq\|U^*\|_{s,q}\|U\|_{\infty,t}\|U\|_{p',w}\|f\|_p. \qedhere
		\end{equation*}
	\end{proof}

    \subsection{Kernel, range and invertibility of the translation}
    We now examine the algebraic structure of the translation operator. Specifically, we characterize its kernel and range, and establish the conditions under which the operator is invertible.
    
	\begin{Theorem} \label{Invertibility of translation}
        Fix an orthonormal basis $\mathcal{B}=\lbrace u_1,\dots,u_N\rbrace$ of $\mathbb{K}^N$, a Banach space $X$, and $m\in\lbrace1,\dots,N\rbrace$.
        \begin{enumerate}[label=(\alph*)]
            \item $\ker T_m=\lbrace f\in X^N \mid \hat{f}(k)=0 \text{ for all } k\in\lbrace1,\dots,N\rbrace \text{ such that } u_k(m)\neq0\rbrace$.
            \item $\im T_m=\lbrace g\in X^N \mid \hat{g}(k)=0 \text{ for all } k\in\lbrace1,\dots,N\rbrace \text{ such that } u_k(m)=0\rbrace$.
            \item $T_m$ is invertible $\Leftrightarrow u_k(m)\neq0$ for all $k\in\lbrace1,\dots,N\rbrace$ and, if $T_m$ is invertible, its inverse is the linear operator $T_m^{-1}$ defined by:
            \begin{equation*}
                T_m^{-1}:X^N\to X^N,g\mapsto T_m^{-1}g, \qquad T_m^{-1}g(n):=\sum_{k=1}^{N}\frac{u_k(n)}{\overline{u_k(m)}}\hat{g}(k).
            \end{equation*}
            \item $T_m$ is injective $\Leftrightarrow T_m$ is invertible $\Leftrightarrow T_m$ is surjective.
        \end{enumerate}
    \end{Theorem}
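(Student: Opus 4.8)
The plan is to diagonalize $T_m$ through the graph Fourier transform. By Theorem~\ref{Properties of translation}(a) we have $\widehat{T_mf}(k)=\overline{u_k(m)}\hat f(k)$ for every $k$, so $T_m=\mathcal{F}^{-1}\circ M\circ\mathcal{F}$, where $M$ is the operator multiplying the $k$-th Fourier coefficient by the scalar $\overline{u_k(m)}$. Since $\mathcal{F}$ is a topological isomorphism (Theorems~\ref{Continuity of Fourier transform 1}--\ref{Continuity of Fourier transform 3}), every structural property of $T_m$ is obtained by transporting the corresponding property of $M$ along $\mathcal{F}$.

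For part~(a), I would observe that $f\in\ker T_m$ iff $\widehat{T_mf}(k)=\overline{u_k(m)}\hat f(k)=0$ in $X$ for all $k$; as $\overline{u_k(m)}$ is a scalar, this holds iff for each $k$ either $u_k(m)=0$ or $\hat f(k)=0$, i.e. iff $\hat f(k)=0$ whenever $u_k(m)\neq0$. For part~(b), the inclusion $\subseteq$ is immediate from the same formula. For $\supseteq$, given $g$ with $\hat g(k)=0$ for all $k$ with $u_k(m)=0$, set $h(k):=\hat g(k)/\overline{u_k(m)}$ when $u_k(m)\neq0$ and $h(k):=0$ otherwise, and let $f:=\mathcal{F}^{-1}h$; then $\widehat{T_mf}(k)=\overline{u_k(m)}h(k)=\hat g(k)$ for every $k$ (using $\hat g(k)=0$ on the zero-set), so $T_mf=g$ by injectivity of $\mathcal{F}$.

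For part~(c), combining~(a) and~(b): $\ker T_m=\{0\}$ iff no index $k$ with $u_k(m)=0$ is available (otherwise $\mathcal{F}^{-1}h_0$ with $h_0$ carrying a nonzero $x_0\in X$ in such a position $k$ and $0$ elsewhere lies in $\ker T_m$), and $\im T_m=X^N$ iff the range constraint is vacuous; both conditions are exactly ``$u_k(m)\neq0$ for all $k$''. When this holds I would verify, using the orthonormality relation $\sum_n u_k(n)\overline{u_{k'}(n)}=\delta_{kk'}$, that the stated $T_m^{-1}$ satisfies $\widehat{T_m^{-1}g}(k)=\hat g(k)/\overline{u_k(m)}$, so $T_m^{-1}T_m$ and $T_mT_m^{-1}$ act as the identity on all Fourier coefficients and hence (again by injectivity of $\mathcal{F}$) on $X^N$; boundedness of $T_m^{-1}$ is automatic, being a composition of $\mathcal{F}^{\pm1}$ with finitely many nonzero scalar multiplications on $X^N$. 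Part~(d) then follows at once, since injectivity, surjectivity, and invertibility of $T_m$ have each been shown equivalent to the single condition $u_k(m)\neq0$ for all $k$.

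I do not expect a real obstacle: the argument is entirely ``conjugate by $\mathcal{F}$''. The only step needing genuine care is the inclusion $\supseteq$ in~(b) --- and correspondingly the surjectivity half of~(c) --- where one must exhibit a concrete preimage rather than argue by a dimension count, since $X$, and therefore $X^N$, need not be finite-dimensional; the explicit choice of $h$ on the zero-set of $k\mapsto u_k(m)$, together with the hypothesis that $\hat g$ already vanishes there, is precisely what makes this work.
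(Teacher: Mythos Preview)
Your proposal is correct and follows essentially the same route as the paper's proof: both diagonalize $T_m$ via $\widehat{T_mf}(k)=\overline{u_k(m)}\hat f(k)$, read off kernel and image from this scalar-multiplication action, construct the explicit Fourier-side preimage for~(b) and the surjectivity in~(c), and exhibit a test vector $\hat f=(0,\dots,x_0,\dots,0)$ supported at an index with $u_k(m)=0$ for the converse. Your framing through the multiplier operator $M$ and your remark that a dimension count is unavailable in infinite-dimensional $X$ are helpful glosses, but the substance of the argument is identical to the paper's.
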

	\begin{proof}
        We prove the four properties separately.
		\begin{enumerate}[label=(\alph*), leftmargin=*, widest=a, font=\bfseries]
            \item Since the graph Fourier transform is invertible, by Theorem \ref{Properties of translation}, we have
            \begin{equation*}
                f\in\ker T_m \Leftrightarrow T_mf=0 \Leftrightarrow \widehat{T_mf}=0 \Leftrightarrow \overline{u_k(m)}\hat{f}(k)=0 \ \text{for all } k\in\lbrace1,\dots,N\rbrace.
            \end{equation*}
            Hence, if $f\in\ker T_m$, then $\hat{f}(k)=0$ for all $k\in\lbrace1,\dots,N\rbrace$ such that $u_k(m)\neq 0$. If $\hat{f}(k)=0$ for all $k\in\lbrace1,\dots,N\rbrace$ such that $u_k(m)\neq 0$, then $f\in\ker T_m$.
            \item We know that $g\in\im T_m$ if and only if there exists $f\in X^N$ such that $T_mf=g$. Since the graph Fourier transform is invertible, by Theorem \ref{Properties of translation}, we have
            \begin{equation*}
                T_mf=g \Leftrightarrow \widehat{T_mf}=\hat{g} \Leftrightarrow \overline{u_k(m)}\hat{f}(k)=\hat{g}(k) \ \text{for all } k\in\lbrace1,\dots,N\rbrace.
            \end{equation*}
            Hence, if $g\in\im T_m$, then $\hat{g}(k)=0$ for all $k\in\lbrace1,\dots,N\rbrace$ such that $u_k(m)=0$. If $\hat{g}(k)=0$ for all $k\in\lbrace1,\dots,N\rbrace$ such that $u_k(m)=0$, we can define the signal $f\in X^N$ such that, for all $k\in\lbrace1,\dots,N\rbrace$,
            \begin{equation*}
                \hat{f}(k):=
                \begin{cases}
                    \overline{u_k(m)^{-1}}\hat{g}(k) & \text{if } u_k(m)\neq0, \\
                    0 & \text{if } u_k(m)=0.
                \end{cases}
            \end{equation*}
            Hence, by Theorem \ref{Properties of translation},
            \begin{equation*}
                \widehat{T_mf}(k)=\overline{u_k(m)}\hat{f}(k)=\hat{g}(k),
            \end{equation*}
            for all $k\in\lbrace1,\dots,N\rbrace$. Thus, by the inversion formula, we obtain $T_mf=g$ and so $g\in\im T_m$.
            \item Suppose that $u_k(m)\neq0$ for all $k\in\lbrace1,\dots,N\rbrace$. Part (a) yields $\ker T_m=\lbrace0\rbrace$, proving the injectivity of $T_m$. Now we show its surjectivity. Fix $g\in X^N$, and define the signal $f\in X^N$ such that, for all $k\in\lbrace1,\dots,N\rbrace$,
            \begin{equation*}
                \hat{f}(k):=\frac{1}{\overline{u_k(m)}}\hat{g}(k).
            \end{equation*}
            Hence, by the inversion formula,
            \begin{equation*}
                f(n)=\sum_{k=1}^{N}u_k(n)\hat{f}(k)=\sum_{k=1}^{N}\frac{u_k(n)}{\overline{u_k(m)}}\hat{g}(k)=T_m^{-1}g(n),
            \end{equation*}
            for all $n\in\lbrace1,\dots,N\rbrace$, and, by Theorem \ref{Properties of translation},
            \begin{equation*}
                \widehat{T_mf}(k)=\overline{u_k(m)}\hat{f}(k)=\hat{g}(k),
            \end{equation*}
            for all $k\in\lbrace1,\dots,N\rbrace$. Thus, by the inversion formula, we obtain $T_mf=g$.

            Now we suppose that there exists $k^*\in\lbrace1,\dots,N\rbrace$ such that $u_{k^*}(m)=0$ and we show that $T_m$ is not invertible. Fix $x_0\in X-\lbrace0\rbrace$, and consider $f\in X^N-\lbrace0\rbrace$ such that $\hat{f}:=(0,\dots,x_0,\dots,0)$, having $x_0$ in the $k^*$-th position. Thus, by part (a), $f\in\ker T_m$ and so $T_m$ is not invertible.
            \item We are only left to show that, if $T_m$ is not invertible, then $T_m$ is neither injective nor surjective. Indeed, by part (c), suppose that there exists $k^*\in\lbrace1,\dots,N\rbrace$, such that $u_{k^*}(m)=0$. Fix $x_0\in X-\lbrace0\rbrace$, and consider the signal $f\in X^N-\lbrace0\rbrace$, such that $\hat{f}:=(0,\dots,x_0,\dots,0)$, having $x_0$ in the $k^*$-th position. Thus, by part (a) and part (b), $f\in\ker T_m$ and $f\notin\im T_m$. \qedhere
        \end{enumerate}
	\end{proof}
    
    \begin{Theorem} \label{Closedeness of kernel and range of the translation}
        Fix an orthonormal basis $\mathcal{B}=\lbrace u_1,\dots,u_N\rbrace$ of $\mathbb{K}^N$, a Banach space $X$, and $m\in\lbrace1,\dots,N\rbrace$.
        \begin{enumerate}[label=(\alph*)]
            \item $\ker T_m$ and $\im T_m$ are closed subspaces of $X^N$, and hence Banach spaces.
            \item If $K_0:=\lbrace k\in\lbrace1,\dots,N\rbrace \mid u_k(m)=0\rbrace$ and $d:=|K_0|$ is its cardinality, then $\ker T_m\cong X^d$ and $\im T_m\cong X^{N-d}$, where $\cong$ denotes topological isomorphism.
            \item $X^N=\ker T_m\oplus\im T_m$, as a topological direct sum.
            \item If $X$ is a Hilbert space, then $\ker T_m\cong X^d$ and $\im T_m\cong X^{N-d}$, where in this case $\cong$ denotes isometric isomorphism with respect to $L^2$ norms.
        \end{enumerate}
    \end{Theorem}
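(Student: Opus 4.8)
The plan is to push everything through the graph Fourier transform $\mathcal{F}$, which by Theorem~\ref{Continuity of Fourier transform 3} is a topological isomorphism of $X^N$ onto itself, and by Theorem~\ref{Parseval and Plancherel} an $L^2$-isometry when $X$ is a Hilbert space. Write $K_0:=\{k : u_k(m)=0\}$ and $K_1:=\{k : u_k(m)\neq0\}$, so $|K_0|=d$ and $|K_1|=N-d$. Applying $\mathcal{F}$ to the descriptions in Theorem~\ref{Invertibility of translation}(a)--(b), one gets that $\mathcal{F}(\ker T_m)$ is the subspace of signals vanishing at every $k\in K_1$, while $\mathcal{F}(\im T_m)$ is the subspace of signals vanishing at every $k\in K_0$. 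Thus on the spectral side $\mathcal{F}$ turns $T_m$ into the diagonal multiplier $F(k)\mapsto\overline{u_k(m)}F(k)$, and its kernel and range become complementary coordinate subspaces of $X^N$, which makes all four assertions transparent.

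For part (a), $\ker T_m=T_m^{-1}(\{0\})$ is closed because $T_m$ is continuous (Theorem~\ref{Continuity of translation}); and since each coordinate evaluation $\pi_k\colon X^N\to X$ is continuous, the set $\{F : F(k)=0\ \text{for all } k\in K_0\}=\bigcap_{k\in K_0}\ker\pi_k$ is closed, so $\im T_m$, being the $\mathcal{F}^{-1}$-preimage of this closed set, is closed as well; a closed subspace of a Banach space is a Banach space. For part (b), fix an enumeration of $K_0$ and let $\Phi_0\colon\mathcal{F}(\ker T_m)\to X^d$ read off the $K_0$-coordinates; this is a linear bijection, continuous coordinatewise, with continuous inverse (the inverse inserts a $d$-tuple into the $K_0$-slots and puts $0$ elsewhere), hence a topological isomorphism. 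Composing with the restriction of $\mathcal{F}$, which is a topological isomorphism of $\ker T_m$ onto $\mathcal{F}(\ker T_m)$, yields $\ker T_m\cong X^d$; the same argument with $K_1$ gives $\im T_m\cong X^{N-d}$.

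For part (c), define on the spectral side $P_0\colon X^N\to X^N$ by $(P_0F)(k)=F(k)$ for $k\in K_0$ and $(P_0F)(k)=0$ otherwise; this is a continuous idempotent with image $\mathcal{F}(\ker T_m)$ and kernel $\mathcal{F}(\im T_m)$, so $X^N=\mathcal{F}(\ker T_m)\oplus\mathcal{F}(\im T_m)$ as a topological direct sum. Conjugating by the homeomorphism $\mathcal{F}$, the operator $\mathcal{F}^{-1}P_0\mathcal{F}$ is a continuous idempotent with image $\ker T_m$ and kernel $\im T_m$, so $X^N=\ker T_m\oplus\im T_m$ topologically. For part (d), when $X$ is a Hilbert space $\mathcal{F}$ is an $L^2$-isometry by Theorem~\ref{Parseval and Plancherel}, and $\Phi_0$ is an $L^2$-isometry since $\|F\|_2^2=\sum_{k\in K_0}\|F(k)\|^2=\|\Phi_0F\|_2^2$ for $F\in\mathcal{F}(\ker T_m)$; composing, $\ker T_m\cong X^d$ isometrically, and likewise $\im T_m\cong X^{N-d}$.

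I do not expect a serious obstacle: once one observes that $\mathcal{F}$ diagonalizes $T_m$, the proof is essentially bookkeeping. The only points that need care are the continuity of the coordinate projections and truncations — this is exactly what upgrades the algebraic splitting in (c) to a topological one and what makes $\Phi_0$ a topological isomorphism in (b) — and the fact, recorded in the Preliminaries, that all $L^p$ norms on $X^N$ are equivalent, so that the meaning of "topological isomorphism" in (b)--(c) does not depend on the chosen norm.
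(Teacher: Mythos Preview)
Your proposal is correct and follows essentially the same route as the paper: the paper's isomorphisms $\varphi(f)=(\hat{f}(k))_{k\in K_0}$ and $\psi(f)=(\hat{f}(k))_{k\in K_0^c}$ are precisely your compositions $\Phi_0\circ\mathcal{F}$ and its $K_1$-analogue, and the paper's explicit decomposition $f=\sum_{k\in K_0}u_k\hat{f}(k)+\sum_{k\in K_0^c}u_k\hat{f}(k)$ is exactly your conjugated projection $\mathcal{F}^{-1}P_0\mathcal{F}$. Your framing in terms of $\mathcal{F}$ diagonalizing $T_m$ is a bit more conceptual, but the underlying maps and estimates are identical.
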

    \begin{proof}
        We prove the four properties separately.
		\begin{enumerate}[label=(\alph*), leftmargin=*, widest=a, font=\bfseries]
            \item By Theorem \ref{Continuity of translation}, $T_m$ is a continuous linear operator, hence $\ker T_m=T_m^{-1}(0)$ is a closed subspace of $X^N$. Let $(g_n)\subseteq\im T_m$ be a sequence such that $g_n\to g$ as $n\to\infty$, and fix an index $k\in\lbrace1,\dots,N\rbrace$ such that $u_k(m)=0$. So, by Theorem \ref{Invertibility of translation}, $\hat{g}_n(k)=0$ for all $n\in\mathbb{N}$, and, by continuity of the graph Fourier transform, $\hat{g}_n\to\hat{g}$ as $n\to\infty$. Thus, $\hat{g}(k)=0$, and hence $g\in\im T_m$, implying that $\im T_m$ is closed.
            \item Consider the following linear operator:
            \begin{equation*}
                \varphi:\ker T_m\to X^d,f\mapsto\varphi(f):=(\hat{f}(k))_{k\in K_0}.
            \end{equation*}
            Notice that, by Theorem \ref{Invertibility of translation}, $\varphi$ is an isomorphism, and its inverse is the linear operator $\varphi^{-1}$ defined by:
            \begin{equation*}
                \varphi^{-1}:X^d\to\ker T_m,g\mapsto\varphi^{-1}(g):=\sum_{k\in K_0}u_kg(k),
            \end{equation*}
            such that, for all $n\in\lbrace1,\dots,N\rbrace$,
            \begin{equation*}
                \varphi^{-1}(g)(n)=\sum_{k\in K_0}u_k(n)g(k)=\check{g}(n).
            \end{equation*}
            Since all $L^p$ norms are equivalent, we can prove the continuity of $\varphi$ and of $\varphi^{-1}$ with respect to any of them. By Theorem \ref{Continuity of Fourier transform 1}, for all $f\in\ker T_m$ and $g\in X^d$,
            \begin{equation*}
                \|\varphi(f)\|_\infty=\|\hat{f}\|_\infty\leq\kappa(U)\|f\|_1,
            \end{equation*}
            and
            \begin{equation*}
                \|\varphi^{-1}(g)\|_\infty=\|\check{g}\|_\infty\leq\kappa(U^*)\|g\|_1,
            \end{equation*}
            showing that $\varphi$ is a topological isomorphism. Analogously, consider the following linear operator:
            \begin{equation*}
                \psi:\im T_m\to X^{N-d},f\mapsto\psi(f):=(\hat{f}(k))_{k\in K_0^c}.
            \end{equation*}
            Notice that, by Theorem \ref{Invertibility of translation}, $\psi$ is an isomorphism, and its inverse is the linear operator $\psi^{-1}$ defined by:
            \begin{equation*}
                \psi^{-1}:X^{N-d}\to\im T_m,g\mapsto\psi^{-1}(g):=\sum_{k\in K_0^c}u_kg(k),
            \end{equation*}
            such that, for all $n\in\lbrace1,\dots,N\rbrace$,
            \begin{equation*}
                \psi^{-1}(g)(n)=\sum_{k\in K_0^c}u_k(n)g(k)=\check{g}(n).
            \end{equation*}
            Since all $L^p$ norms are equivalent, we can prove the continuity of $\psi$ and of $\psi^{-1}$ with respect to any of them. By Theorem \ref{Continuity of Fourier transform 1}, for all $f\in\im T_m$ and $g\in X^{N-d}$,
            \begin{equation*}
                \|\psi(f)\|_\infty=\|\hat{f}\|_\infty\leq\kappa(U)\|f\|_1,
            \end{equation*}
            and
            \begin{equation*}
                \|\psi^{-1}(g)\|_\infty=\|\check{g}\|_\infty\leq\kappa(U^*)\|g\|_1,
            \end{equation*}
            showing that $\psi$ is a topological isomorphism.
            \item Fix $f\in X^N$. By the inversion formula, for all $n\in\lbrace1,\dots,N\rbrace$, we have
            \begin{equation*}
                f(n)=\sum_{k=1}^{N}u_k(n)\hat{f}(k)=\sum_{k\in K_0}u_k(n)\hat{f}(k)+\sum_{k\in K_0^c}u_k(n)\hat{f}(k),
            \end{equation*}
            so that
            \begin{equation*}
                f=\sum_{k\in K_0}u_k\hat{f}(k)+\sum_{k\in K_0^c}u_k\hat{f}(k).
            \end{equation*}
            Hence, by Theorem \ref{Invertibility of translation},
            \begin{equation*}
                f_{\text{ker}}:=\sum_{k\in K_0}u_k\hat{f}(k)\in\ker T_m, \quad f_{\text{im}}:=\sum_{k\in K_0^c}u_k\hat{f}(k)\in\im T_m,
            \end{equation*}
            and $f=f_{\text{ker}}+f_{\text{im}}$. Note that, for $g\in\ker T_m\cap\im T_m$, by Theorem \ref{Invertibility of translation}, $\hat{g}(k)=0$ for all $k\in\lbrace1,\dots,N\rbrace$, and hence $g=0$. This demonstrates the algebraic direct sum $X^N=\ker T_m\oplus\im T_m$. Define the projection operators:
            \begin{equation*}
                P:X^N\to\ker T_m,f\mapsto Pf:=f_{\text{ker}}, \quad Q:X^N\to\im T_m,f\mapsto Qf:=f_{\text{im}}.
            \end{equation*}
            Note that $P$ and $Q$ are continuous linear operators. Indeed, by Theorem \ref{Continuity of Fourier transform 1},
            \begin{equation*}
                \|Pf\|_\infty=\|f_{\text{ker}}\|_\infty\leq\|\check{f}\|_\infty\leq\kappa(U^*)\|f\|_1,
            \end{equation*}
            and
            \begin{equation*}
                \|Qf\|_\infty=\|f_{\text{im}}\|_\infty\leq\|\check{f}\|_\infty\leq\kappa(U^*)\|f\|_1.
            \end{equation*}
            This demonstrates the topological direct sum $X^N=\ker T_m\oplus\im T_m$.
            \item Consider the isomorphisms defined before:
            \begin{equation*}
                \varphi:\ker T_m\to X^d,f\mapsto\varphi(f):=(\hat{f}(k))_{k\in K_0},
            \end{equation*}
            and
            \begin{equation*}
                \psi:\im T_m\to X^{N-d},f\mapsto\psi(f):=(\hat{f}(k))_{k\in K_0^c}.
            \end{equation*}
            Hence, by Plancherel's equality, for all $f\in\ker T_m$,
            \begin{equation*}
                \|\varphi(f)\|_2^2=\|(\hat{f}(k))_{k\in K_0}\|_2^2=\sum_{k\in K_0}\|\hat{f}(k)\|^2=\sum_{k=1}^{N}\|\hat{f}(k)\|^2=\|\hat{f}\|_2^2=\|f\|_2^2,
            \end{equation*}
            and, for all $f\in\im T_m$,
            \begin{equation*}
                \|\psi(f)\|_2^2=\|(\hat{f}(k))_{k\in K_0^c}\|_2^2=\sum_{k\in K_0^c}\|\hat{f}(k)\|^2=\sum_{k=1}^{N}\|\hat{f}(k)\|^2=\|\hat{f}\|_2^2=\|f\|_2^2.
            \end{equation*}
            This demonstrates that $\varphi$ and $\psi$ are isometric isomorphisms. \qedhere
        \end{enumerate}
	\end{proof}
    
    \begin{Corollary} \label{Induced translation operator}
        Fix an orthonormal basis $\mathcal{B}=\lbrace u_1,\dots,u_N\rbrace$ of $\mathbb{K}^N$, a Banach space $X$, and $m\in\lbrace1,\dots,N\rbrace$. Then, there exists a unique topological isomorphism
        \begin{equation*}
            \widetilde{T}_m:X^N/\ker T_m\to\im T_m,
        \end{equation*}
        such that $T_m=i\circ\widetilde{T}_m\circ\pi$, where $i:\im T_m\hookrightarrow X^N$ is the canonical injection, and
        \begin{equation*}
            \pi:X^N\to X^N/\ker T_m,f\mapsto\pi(f):=f+\ker T_m
        \end{equation*}
        is the canonical projection onto the quotient space, equipped with the norm
        \begin{equation*}
            \|f+\ker T_m\|:=\inf\lbrace\|f+z\| \mid z\in\ker T_m\rbrace.
        \end{equation*}
        In particular, $\|\widetilde{T}_m\|=\|T_m\|$.
    \end{Corollary}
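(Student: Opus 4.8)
The plan is to apply the first isomorphism theorem for bounded linear operators between Banach spaces, using the closedness results already established. First I would define the candidate map by
\begin{equation*}
\widetilde{T}_m(f+\ker T_m):=T_mf,
\end{equation*}
and check it is well defined (if $f-g\in\ker T_m$, then $T_mf=T_mg$ by linearity) and linear. By construction the factorization $T_m=i\circ\widetilde{T}_m\circ\pi$ holds. The map $\widetilde{T}_m$ is injective, since $\widetilde{T}_m(f+\ker T_m)=0$ forces $f\in\ker T_m$, i.e. $f+\ker T_m$ is the zero class of the quotient; and it is surjective onto $\im T_m$ directly from the definition of the image. Hence $\widetilde{T}_m$ is a linear bijection onto $\im T_m$.

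Next I would prove continuity of $\widetilde{T}_m$. Fixing any of the (equivalent) $L^p$ norms on $X^N$, for every $z\in\ker T_m$ we have $\|\widetilde{T}_m(f+\ker T_m)\|=\|T_mf\|=\|T_m(f+z)\|\leq\|T_m\|\,\|f+z\|$, using that $T_m$ is bounded by Theorem \ref{Continuity of translation}; taking the infimum over $z\in\ker T_m$ yields $\|\widetilde{T}_m(f+\ker T_m)\|\leq\|T_m\|\,\|f+\ker T_m\|$, so $\widetilde{T}_m$ is bounded with $\|\widetilde{T}_m\|\leq\|T_m\|$. For continuity of the inverse I would invoke the bounded inverse theorem: by Theorem \ref{Closedeness of kernel and range of the translation}(a), $\ker T_m$ and $\im T_m$ are closed, so $X^N/\ker T_m$ and $\im T_m$ are Banach spaces, and a continuous linear bijection between Banach spaces has continuous inverse. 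Therefore $\widetilde{T}_m$ is a topological isomorphism. (Alternatively, one can exhibit $\widetilde{T}_m^{-1}$ explicitly via the partial Fourier inversion used in Theorem \ref{Invertibility of translation}, bypassing the open mapping theorem.)

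For the norm equality, the inequality $\|\widetilde{T}_m\|\leq\|T_m\|$ was just shown. For the reverse, observe that $\pi$ is a contraction, since $\|\pi(f)\|=\inf_{z\in\ker T_m}\|f+z\|\leq\|f\|$, and $i$ is an isometric inclusion, so $\|i\|\leq1$ and $\|\pi\|\leq1$; hence from $T_m=i\circ\widetilde{T}_m\circ\pi$ we get $\|T_m\|\leq\|i\|\,\|\widetilde{T}_m\|\,\|\pi\|\leq\|\widetilde{T}_m\|$. Combining the two bounds gives $\|\widetilde{T}_m\|=\|T_m\|$. Finally, for uniqueness, if $\widetilde{T}_m'$ also satisfies $T_m=i\circ\widetilde{T}_m'\circ\pi$, then $i\circ\widetilde{T}_m'\circ\pi=i\circ\widetilde{T}_m\circ\pi$, and since $i$ is injective and $\pi$ is surjective this forces $\widetilde{T}_m'=\widetilde{T}_m$ on all of $X^N/\ker T_m$. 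The only non-elementary ingredient is the bounded inverse theorem, and its hypotheses are supplied precisely by the closedness of $\ker T_m$ and $\im T_m$ from the previous theorem; the rest is a routine diagram chase, so I do not expect a real obstacle here.
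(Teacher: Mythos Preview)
Your proof is correct and follows essentially the same route as the paper: the paper invokes the first isomorphism theorem for Banach spaces (using the topological direct sum $X^N=\ker T_m\oplus\im T_m$ from Theorem \ref{Closedeness of kernel and range of the translation}(c)) where you instead appeal to the bounded inverse theorem via part (a), and the two norm inequalities are proved identically. Your explicit treatment of uniqueness is a small bonus, as the paper leaves that implicit.
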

    \begin{proof}
        By Theorem \ref{Closedeness of kernel and range of the translation}, $X^N=\ker T_m\oplus\im T_m$, as a topological direct sum, and thus $\widetilde{T}_m$ is well-defined by the first isomorphism theorem for Banach spaces. Moreover, for all $f\in X^N$ and $z\in\ker T_m$,
        \begin{equation*}
            \|\widetilde{T}_m(f+\ker T_m)\|=\|T_mf\|=\|T_m(f+z)\|\leq\|T_m\|\|f+z\|,
        \end{equation*}
        and so
        \begin{equation*}
            \|\widetilde{T}_m(f+\ker T_m)\|\leq\|T_m\|\|f+\ker T_m\|.
        \end{equation*}
        This shows that $\|\widetilde{T}_m\|\leq\|T_m\|$. Analogously, for all $f\in X^N$,
        \begin{equation*}
            \|T_mf\|=\|\widetilde{T}_m(f+\ker T_m)\|\leq\|\widetilde{T}_m\|\|f+\ker T_m\|\leq\|\widetilde{T}_m\|\|f\|,
        \end{equation*}
        demonstrating that $\|T_m\|\leq\|\widetilde{T}_m\|$. \qedhere
    \end{proof}
	
	\textbf{Remark:} The inverse induced translation operator $\widetilde{T}_m^{-1}$ acts as follows:
    \begin{equation*}
        \widetilde{T}_m^{-1}:\im T_m\to X^N/\ker T_m, \qquad \widetilde{T}_m^{-1}g=\sum_{k\in K_0^c}\frac{u_k}{\overline{u_k(m)}}\hat{g}(k)+\ker T_m,
    \end{equation*}
    where $K_0:=\lbrace k\in\lbrace1,\dots,N\rbrace \mid u_k(m)=0\rbrace$.

    \subsection{Translation operator of graph signals on Hilbert spaces}
    We now study further properties of the translation operator when $X$ is a Hilbert space.
    
	\begin{Theorem} \label{Translation on Hilbert spaces}
        Fix an orthonormal basis $\mathcal{B}=\lbrace u_1,\dots,u_N\rbrace$ of $\mathbb{K}^N$, a Hilbert space $X$, $m\in\lbrace1,\dots,N\rbrace$, and $f\in X^N$. Then,
        \begin{equation*}
            \|T_mf\|_2\leq\|u(m)\|_\infty\|f\|_2.
        \end{equation*}
        Consequently, $T_m:L^2(V,X)\to L^2(V,X)$ is a continuous linear operator, and
        \begin{equation*}
            \|\widetilde{T}_m\|_{2\to 2}=\max\lbrace|u_k(m)| \mid k\in K_0^c\rbrace, \quad \|\widetilde{T}_m^{-1}\|_{2\to 2}=\frac{1}{\min\lbrace|u_k(m)| \mid k\in K_0^c\rbrace},
		\end{equation*}
        where $K_0:=\lbrace k\in\lbrace1,\dots,N\rbrace \mid u_k(m)=0\rbrace$. Furthermore,
        \begin{equation*}
			\frac{\widetilde{T}_m}{\|u(m)\|_\infty} \text{ is an isometric isomorphism} \Leftrightarrow \|u(m)\|_\infty=(N-d)^{-1/2},
		\end{equation*}
        where $d:=|K_0|$.
    \end{Theorem}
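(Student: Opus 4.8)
The plan is to reduce everything to Plancherel's equality (Theorem \ref{Parseval and Plancherel}) together with the Fourier-multiplier description $\widehat{T_mf}(k)=\overline{u_k(m)}\,\hat f(k)$ from Theorem \ref{Properties of translation}(a). For the norm estimate I would apply Plancherel twice:
\[
\|T_mf\|_2^2=\|\widehat{T_mf}\|_2^2=\sum_{k=1}^{N}|u_k(m)|^2\,\|\hat f(k)\|^2\le\|u(m)\|_\infty^2\sum_{k=1}^{N}\|\hat f(k)\|^2=\|u(m)\|_\infty^2\,\|f\|_2^2,
\]
which yields continuity and $\|T_m\|_{2\to2}\le\|u(m)\|_\infty$. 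Note that the entries of $u(m)$ indexed by $K_0$ vanish and $K_0^c\ne\emptyset$ (because $\|u(m)\|_2=1$), so $\|u(m)\|_\infty=\max\{|u_k(m)|\mid k\in K_0^c\}$.

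For the operator norm of $\widetilde T_m$ I would invoke $\|\widetilde T_m\|=\|T_m\|$ from Corollary \ref{Induced translation operator}, so it is enough to prove $\|T_m\|_{2\to2}=\|u(m)\|_\infty$. The matching lower bound follows by testing on the signal $f$ with $\hat f=(0,\dots,x_0,\dots,0)$ (where $\|x_0\|=1$ sits in position $k^\ast$, and $k^\ast\in K_0^c$ realizes $|u_{k^\ast}(m)|=\|u(m)\|_\infty$): then $\|f\|_2=1$ and, by Plancherel, $\|T_mf\|_2=|u_{k^\ast}(m)|=\|u(m)\|_\infty$. This is the same device used for the reverse inequalities in Theorems \ref{Continuity of Fourier transform 1} and \ref{Continuity of Fourier transform 2}.

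For $\|\widetilde T_m^{-1}\|_{2\to2}$ the key point is that $\ker T_m$ and $\im T_m$ are orthogonal in $L^2(V,X)$: by Theorem \ref{Invertibility of translation} their elements have Fourier transforms supported on $K_0$ and $K_0^c$ respectively, so Parseval's identity forces $\langle f,z\rangle=0$ for $f\in\im T_m$, $z\in\ker T_m$. Hence, for the representative $f=\sum_{k\in K_0^c}\frac{u_k}{\overline{u_k(m)}}\hat g(k)$ of $\widetilde T_m^{-1}g$ (see the Remark after Corollary \ref{Induced translation operator}), one has $\|f+z\|_2^2=\|f\|_2^2+\|z\|_2^2$ for every $z\in\ker T_m$, so the quotient norm satisfies $\|\widetilde T_m^{-1}g\|=\|f\|_2$. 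Plancherel then gives $\|f\|_2^2=\sum_{k\in K_0^c}\frac{\|\hat g(k)\|^2}{|u_k(m)|^2}\le\frac{\|g\|_2^2}{\min\{|u_k(m)|^2\mid k\in K_0^c\}}$, and equality is attained by the $g$ whose Fourier transform is a one-hot vector supported at an index of $K_0^c$ minimizing $|u_k(m)|$; this is the asserted value.

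Finally, for the isometry criterion I would set $S:=\widetilde T_m/\|u(m)\|_\infty$ and use that an invertible bounded operator is an isometric isomorphism if and only if $\|S\|\le1$ and $\|S^{-1}\|\le1$. The previous parts give $\|S\|=1$ and $\|S^{-1}\|=\|u(m)\|_\infty/\min\{|u_k(m)|\mid k\in K_0^c\}\ge1$, with $\|S^{-1}\|\le1$ precisely when $\min\{|u_k(m)|\mid k\in K_0^c\}=\max\{|u_k(m)|\mid k\in K_0^c\}=\|u(m)\|_\infty$, i.e.\ when all $N-d$ nonzero entries of $u(m)$ have modulus $\|u(m)\|_\infty$; since $\|u(m)\|_2^2=1$, this is equivalent to $(N-d)\|u(m)\|_\infty^2=1$, that is, $\|u(m)\|_\infty=(N-d)^{-1/2}$. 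The only genuinely delicate step is the orthogonal-decomposition computation of the quotient norm in the $\widetilde T_m^{-1}$ part; all the rest is a direct application of Plancherel's equality and of the results already established for $T_m$ and $\widetilde T_m$. (Throughout one assumes $X\ne\{0\}$, as is implicit in the paper, so that a unit vector $x_0$ exists.)
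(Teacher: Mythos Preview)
Your proposal is correct and follows essentially the same route as the paper: Plancherel plus the multiplier identity $\widehat{T_mf}(k)=\overline{u_k(m)}\hat f(k)$ for the upper bounds, one-hot test signals in the Fourier domain for sharpness, and the orthogonality of $\ker T_m$ and $\im T_m$ (equivalently, the isometric identification $X^N/\ker T_m\cong\im T_m$) to compute $\|\widetilde T_m^{-1}\|_{2\to2}$. The only cosmetic difference is in the isometry criterion: the paper verifies the forward implication by a direct Plancherel computation of $\|\widetilde T_mf\|_2$, whereas you deduce both directions from the already-established identities $\|S\|=1$ and $\|S^{-1}\|=\|u(m)\|_\infty/\min_{k\in K_0^c}|u_k(m)|$ together with the equality case $\sum_{k\in K_0^c}|u_k(m)|^2=1=(N-d)\|u(m)\|_\infty^2$.
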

	\begin{proof}
        By Plancherel's equality and by Theorem \ref{Properties of translation},
        \begin{align*}
			\|T_mf\|_2^2&=\|\widehat{T_mf}\|_2^2=\sum_{k=1}^{N}\|\widehat{T_mf}(k)\|^2=\sum_{k=1}^{N}\|\overline{u_k(m)}\hat{f}(k)\|^2 \\
            &=\sum_{k=1}^{N}|u_k(m)|^2\|\hat{f}(k)\|^2\leq\|u(m)\|_\infty^2\|\hat{f}\|_2^2=\|u(m)\|_\infty^2\|f\|_2^2.
		\end{align*}
        This estimate shows that $\|T_m\|_2\leq\|u(m)\|_\infty$. We need the reverse inequality. Fix $x_0\in X$, with $\|x_0\|=1$, and let $k^*\in\lbrace1,\dots,N\rbrace$ such that
        \begin{equation*}
            \|u(m)\|_\infty=|u_{k^*}(m)|.
        \end{equation*}
        Now consider the signal $f\in X^N$ such that $\hat{f}:=(0,\dots,x_0,\dots,0)$, having $x_0$ in the $k^*$-th position and $\|\hat{f}\|_2=\|x_0\|=1$. By Plancherel's equality and Theorem \ref{Properties of translation},
        \begin{align*}
            \frac{\|T_mf\|_2}{\|f\|_2}&=\frac{\|\widehat{T_mf}\|_2}{\|\hat{f}\|_2}=\left(\sum_{k=1}^{N}\|\widehat{T_mf}(k)\|^2\right)^{1/2}=\left(\sum_{k=1}^{N}|u_k(m)|^2\|\hat{f}(k)\|^2\right)^{1/2} \\
            &=|u_{k^*}(m)|=\|u(m)\|_\infty.
        \end{align*}
        Hence, we get the desired inequality. As $X$ is a Hilbert space $X^N/\ker T_m$ is isometrically isomorphic to $(\ker T_m)^\perp$, and, since $X^N=\ker T_m\oplus\im T_m$, as a topological direct sum, $X^N/\ker T_m$ is isometrically isomorphic to $\im T_m$. So, we can write the action of the induced translation operator $\widetilde{T}_m^{-1}$ as follows:
        \begin{equation*}
            \widetilde{T}_m^{-1}:\im T_m\to\im T_m, \qquad \widetilde{T}_m^{-1}g=\sum_{k\in K_0^c}\frac{u_k}{\overline{u_k(m)}}\hat{g}(k).
        \end{equation*}
        Notice that
        \begin{align*}
            \mathcal{F}(\widetilde{T}_m^{-1}g)(h)&=\sum_{n=1}^{N}\overline{u_h(n)}\widetilde{T}_m^{-1}g(n)=\sum_{n=1}^{N}\sum_{k\in K_0^c}\overline{u_h(n)}\frac{u_k(n)}{\overline{u_k(m)}}\hat{g}(k) \\
            &=\sum_{k\in K_0^c}\sum_{n=1}^{N}u_k(n)\overline{u_h(n)}\frac{1}{\overline{u_k(m)}}\hat{g}(k)=\sum_{k\in K_0^c}\delta_{hk}\frac{1}{\overline{u_k(m)}}\hat{g}(k) \\
            &=
            \begin{cases}
                \overline{u_h(m)^{-1}}\hat{g}(h) & \text{if } h\in K_0^c, \\
                0 & \text{if } h\in K_0,
            \end{cases}
        \end{align*}
        and so, by Plancherel's equality,
        \begin{align*}
            \|\widetilde{T}_m^{-1}g\|_2^2&=\|\mathcal{F}(\widetilde{T}_m^{-1}g)\|_2^2=\sum_{k=1}^{N}\|\mathcal{F}(\widetilde{T}_m^{-1}g)(k)\|^2=\sum_{k\in K_0^c}\frac{1}{|u_k(m)|^2}\|\hat{g}(k)\|^2 \\
            &\leq\frac{1}{\min\lbrace|u_k(m)| \mid k\in K_0^c\rbrace^2}\|\hat{g}\|_2^2=\frac{1}{\min\lbrace|u_k(m)| \mid k\in K_0^c\rbrace^2}\|g\|_2^2.
        \end{align*}
        This estimate shows that
        \begin{equation*}
            \|\widetilde{T}_m^{-1}\|_{2\to 2}\leq\frac{1}{\min\lbrace|u_k(m)| \mid k\in K_0^c\rbrace}.
		\end{equation*}
        We need the reverse inequality. Fix $x_0\in X$, with $\|x_0\|=1$, and $k^*\in K_0^c$ such that
        \begin{equation*}
            \min\lbrace|u_k(m)| \mid k\in K_0^c\rbrace=|u_{k^*}(m)|.
        \end{equation*}
        Now consider the signal $g\in\im T_m$, such that $\hat{g}:=(0,\dots,x_0,\dots,0)$, with $x_0$ in the $k^*$-th position and $\|\hat{g}\|_2=\|x_0\|=1$. By Plancherel's equality,
        \begin{align*}
            \frac{\|\widetilde{T}_m^{-1}g\|_2}{\|g\|_2}&=\frac{\|\mathcal{F}(\widetilde{T}_m^{-1}g)\|_2}{\|\hat{g}\|_2}=\left(\sum_{k=1}^{N}\|\mathcal{F}(\widetilde{T}_m^{-1}g)(k)\|^2\right)^{1/2} \\
            &=\left(\sum_{k\in K_0^c}\frac{1}{|u_k(m)|^2}\|\hat{g}(k)\|^2\right)^{1/2}=\frac{1}{|u_{k^*}(m)|}=\frac{1}{\min\lbrace|u_k(m)| \mid k\in K_0^c\rbrace}.
        \end{align*}
        Hence, we get the desired inequality.
        
        Now suppose $\|u(m)\|_\infty=(N-d)^{-1/2}$, then $|u_k(m)|=(N-d)^{-1/2}$ for all $k\in K_0^c$, and hence, by Plancherel's equality, for all $f\in\im T_m$,
        \begin{align*}
            \left\|\frac{\widetilde{T}_mf}{\|u(m)\|_\infty}\right\|_2^2&=\left\|\frac{\mathcal{F}(\widetilde{T}_mf)}{\|u(m)\|_\infty}\right\|_2^2=\sum_{k\in K_0^c}\frac{|u_k(m)|^2}{\|u(m)\|_\infty^2}\|\hat{f}(k)\|^2=\sum_{k\in K_0^c}\|\hat{f}(k)\|^2=\|f\|_2^2.
        \end{align*}
        Thus, $\widetilde{T}_m/\|u(m)\|_\infty$ is an isometric isomorphism. Conversely, suppose $\widetilde{T}_m/\|u(m)\|_\infty$ is an isometric isomorphism. Hence, its inverse
        \begin{equation*}
            \left(\frac{\widetilde{T}_m}{\|u(m)\|_\infty}\right)^{-1}=\|u(m)\|_\infty\widetilde{T}_m^{-1}
        \end{equation*}
        is also an isometric isomorphism, and thus
        \begin{equation*}
            1=\|\|u(m)\|_\infty\widetilde{T}_m^{-1}\|_{2\to 2}=\|u(m)\|_\infty\|\widetilde{T}_m^{-1}\|_{2\to 2}=\frac{\max\lbrace|u_k(m)| \mid k\in K_0^c\rbrace}{\min\lbrace|u_k(m)| \mid k\in K_0^c\rbrace}.
        \end{equation*}
        This demonstrates that $|u_k(m)|=\|u(m)\|_\infty$ for all $k\in K_0^c$, and thus, since $\mathcal{B}$ is an orthonormal basis, $\|u(m)\|_\infty=(N-d)^{-1/2}$. \qedhere
	\end{proof}
	
	\textbf{Remark:} In case where $\widetilde{T}_m/\|u(m)\|_\infty$ is an isometric isomorphism, Theorem \ref{Translation on Hilbert spaces} shows that $\|u(m)\|_\infty=(N-d)^{-1/2}$. Hence, the isometry is given by $(N-d)^{1/2}\widetilde{T}_m$. By setting $K_0^m:=\lbrace k\in\lbrace1,\dots,N\rbrace \mid u_k(m)=0\rbrace$ and $d_m:=|K_0^m|$, we have that
    \begin{equation*}
		\frac{\widetilde{T}_m}{\|u(m)\|_\infty} \text{ is an isometry for all } m \Leftrightarrow \|u(m)\|_\infty=(N-d_m)^{-1/2} \text{ for all } m.
	\end{equation*}
    In particular, if $T_m$ is invertible for all $m\in\lbrace1,\dots,N\rbrace$, by Theorem \ref{Invertibility of translation}, $u_k(m)\neq0$ for all $k,m\in\lbrace1,\dots,N\rbrace$, so $K_0^m=\emptyset$ for all $m\in\lbrace1,\dots,N\rbrace$, and thus
    \begin{equation*}
		\frac{T_m}{\|u(m)\|_\infty} \text{ is an isometric isomorphism for all } m \Leftrightarrow \kappa(U)=N^{-1/2}.
	\end{equation*}
    
	\begin{Theorem} \label{Adjoint operator of translation}
        Fix an orthonormal basis $\mathcal{B}=\lbrace u_1,\dots,u_N\rbrace$ of $\mathbb{K}^N$, a Hilbert space $X$, and $m\in\lbrace1,\dots,N\rbrace$. Then, the adjoint of $T_m$ is the following linear operator:
        \begin{equation*}
            T_m^*:X^N\to X^N,g\mapsto T_m^*g, \qquad T_m^*g(n):=\sum_{k=1}^{N}u_k(n)u_k(m)\hat{g}(k).
        \end{equation*}
    In particular, $T_m$ is self-adjoint if and only if $u_k(m)\in\mathbb{R}$ for all $k\in\lbrace1,\dots,N\rbrace$.
    \end{Theorem}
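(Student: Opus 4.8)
The plan is to use the spectral representation of $T_m$ together with Plancherel's equality (Theorem \ref{Parseval and Plancherel}), which makes $\mathcal{F}$ a unitary operator on $L^2(V,X)$ when $X$ is a Hilbert space. First I would recall from Theorem \ref{Properties of translation}(a) that $\widehat{T_mf}(k)=\overline{u_k(m)}\,\hat{f}(k)$, so that $T_m=\mathcal{F}^{-1}M\mathcal{F}$, where $M$ is the frequency-wise multiplication operator $(M\phi)(k):=\overline{u_k(m)}\,\phi(k)$. Since $\mathcal{F}$ is unitary, $\mathcal{F}^*=\mathcal{F}^{-1}$, and hence $T_m^*=(\mathcal{F}^{-1}M\mathcal{F})^*=\mathcal{F}^{-1}M^*\mathcal{F}$, where $M^*$ is multiplication by $\overline{\overline{u_k(m)}}=u_k(m)$; this already yields the stated formula for $T_m^*$.

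To make this explicit (and to pin down the conjugation bookkeeping, which is the only delicate point), I would instead compute $\langle T_mf,g\rangle$ directly, applying Parseval's identity twice:
\begin{align*}
    \langle T_mf,g\rangle &= \langle\widehat{T_mf},\hat{g}\rangle = \sum_{k=1}^{N}\langle\overline{u_k(m)}\,\hat{f}(k),\hat{g}(k)\rangle \\
    &= \sum_{k=1}^{N}\langle\hat{f}(k),u_k(m)\,\hat{g}(k)\rangle = \langle\hat{f},\hat{h}\rangle = \langle f,h\rangle,
\end{align*}
where $h\in X^N$ is the signal determined by $\hat{h}(k):=u_k(m)\,\hat{g}(k)$; by the inversion formula, $h(n)=\sum_{k=1}^{N}u_k(n)\hat{h}(k)=\sum_{k=1}^{N}u_k(n)u_k(m)\hat{g}(k)$, which is exactly $T_m^*g(n)$ as claimed. (The coefficient $\overline{u_k(m)}$ leaves the first slot of the inner product unchanged, being linear there, and enters the second slot conjugated, producing $u_k(m)$ — this is precisely why $T_m^*$ differs from $T_m$ only by conjugating the coefficients.) Since $g$ is arbitrary and the right-hand side has the form $\langle f,\,\cdot\,\rangle$ for all $f$, this identifies the adjoint.

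For the self-adjointness criterion, the "if" direction is immediate: when $u_k(m)\in\mathbb{R}$ for every $k$, the two formulas for $T_mg(n)$ and $T_m^*g(n)$ agree term by term. For "only if", I would test the operator identity $T_m=T_m^*$ on signals with Fourier transform concentrated at a single frequency: fix $x_0\in X$ with $\|x_0\|=1$ and $k_0\in\lbrace1,\dots,N\rbrace$, and take $g\in X^N$ with $\hat{g}:=(0,\dots,x_0,\dots,0)$, $x_0$ in the $k_0$-th position. Then $T_mg(n)-T_m^*g(n)=u_{k_0}(n)\bigl(\overline{u_{k_0}(m)}-u_{k_0}(m)\bigr)x_0$; since $u_{k_0}$ is a basis vector it is nonzero at some $n$, forcing $\overline{u_{k_0}(m)}=u_{k_0}(m)$, i.e. $u_{k_0}(m)\in\mathbb{R}$, and $k_0$ was arbitrary. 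I do not expect a real obstacle here; the only care needed is the conjugate-linearity convention of the inner product and ensuring the test signal is nonzero so the reduction to reality of $u_{k_0}(m)$ is legitimate.
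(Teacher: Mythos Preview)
Your proof is correct and follows essentially the same route as the paper: both compute $\langle T_mf,g\rangle$ via Parseval's identity and Theorem~\ref{Properties of translation}(a), move the scalar $\overline{u_k(m)}$ across the inner product to obtain $u_k(m)$ in the second slot, and identify the resulting expression with $T_m^*g$ through the inversion formula. For the ``only if'' direction you test on the same single-frequency signal; the only cosmetic difference is that the paper evaluates at $n=m$ and splits into the cases $u_k(m)=0$ and $u_k(m)\neq0$, whereas your argument uses that $u_{k_0}$ is nonzero at \emph{some} $n$, which avoids the case split.
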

    \begin{proof}
        Notice that, for all $g\in X^N$ and $k\in\lbrace1,\dots,N\rbrace$,
        \begin{align*}
            \widehat{T_m^*g}(k)&=\sum_{n=1}^{N}\overline{u_k(n)}T_m^*g(n)=\sum_{n=1}^{N}\overline{u_k(n)}\sum_{h=1}^{N}u_h(n)u_h(m)\hat{g}(h) \\
            &=\sum_{h=1}^{N}\sum_{n=1}^{N}u_h(n)\overline{u_k(n)}u_h(m)\hat{g}(h)=\sum_{h=1}^{N}\delta_{hk}u_h(m)\hat{g}(h)=u_k(m)\hat{g}(k).
        \end{align*}
        Thus, by Parseval's identity and by Theorem \ref{Properties of translation}, for all $f,g\in X^N$,
        \begin{align*}
            \langle T_mf,g\rangle&=\langle\widehat{T_mf},\hat{g}\rangle=\sum_{k=1}^{N}\langle\widehat{T_mf}(k),\hat{g}(k)\rangle=\sum_{k=1}^{N}\langle\overline{u_k(m)}\hat{f}(k),\hat{g}(k)\rangle \\
            &=\sum_{k=1}^{N}\langle\hat{f}(k),u_k(m)\hat{g}(k)\rangle=\sum_{k=1}^{N}\langle\hat{f}(k),\widehat{T_m^*g}(k)\rangle=\langle\hat{f},\widehat{T_m^*g}\rangle=\langle f,T_m^*g\rangle.
        \end{align*}
        So, it is trivial to notice that, if $u_k(m)\in\mathbb{R}$ for all $k\in\lbrace1,\dots,N\rbrace$, then $T_m$ is self-adjoint. Therefore, we now suppose $T_m$ is self-adjoint and we show that $u_k(m)\in\mathbb{R}$, for all $k\in\lbrace1,\dots,N\rbrace$. In particular, we know that
        \begin{equation*}
            \sum_{k=1}^{N}u_k(n)\overline{u_k(m)}\hat{f}(k)=\sum_{k=1}^{N}u_k(n)u_k(m)\hat{f}(k),
        \end{equation*}
        for all $f\in X^N$ and $n\in\lbrace1,\dots,N\rbrace$. Fix an index $k\in\lbrace1,\dots,N\rbrace$, and $x_0\in X-\lbrace0\rbrace$. If $u_k(m)=0$, then $u_k(m)$ is real. If $u_k(m)\neq0$, then we consider the signal $f\in X^N$ such that $\hat{f}:=(0,\dots,x_0,\dots,0)$, having $x_0$ in the $k$-th position. Hence, for $n=m$, we have $|u_k(m)|^2x_0=u_k(m)^2x_0$, yielding $u_k(m)=\pm|u_k(m)|\in\mathbb{R}$. \qedhere
    \end{proof}
	
	\textbf{Remark:} As a consequence of Theorem \ref{Adjoint operator of translation}, we find that $T_m$ is self-adjoint for all $m\in\lbrace1,\dots,N\rbrace$ if and only if $\mathcal{B}$ consists of real-valued vectors, as generally happens when considering the orthonormal basis of eigenvectors of either the (normalized) Laplacian matrix or the adjacency matrix of an undirected graph.

    \section{Applications and numerical simulations}
    This section provides a series of numerical simulations demonstrating the application of the graph Fourier transform to vector-valued signals in the most common Banach spaces.
    
    \vspace{0.3cm}
    \textbf{Example:} Let $X=\mathbb{R}^3$ be the Banach space of $3$-dimensional vectors, equipped with the Euclidean norm, and consider the (undirected) circular graph with $N=10$ vertices. In Figure \ref{Signals with 3 components}, we represent the graph Fourier transform of the vector-valued signal $f:V\to\mathbb{R}^3$, whose spatial components $f_1$, $f_2$ and $f_3$ are defined as follows:
    \begin{align*}
        f_1&=(-1,-2,0,0,3,6,0,-3,2,1), \\
        f_2&=(-3,-1,-2,0,3,0,-4,0,1,0), \\
        f_3&=(0,1,3,4,-4,0,0,-1,2,-3).
    \end{align*}
    In the figure, the transformed signal's components are arranged from top to bottom and left to right.
    \begin{figure}[ht]
    	\centering
    	\includegraphics[width=0.3\linewidth]{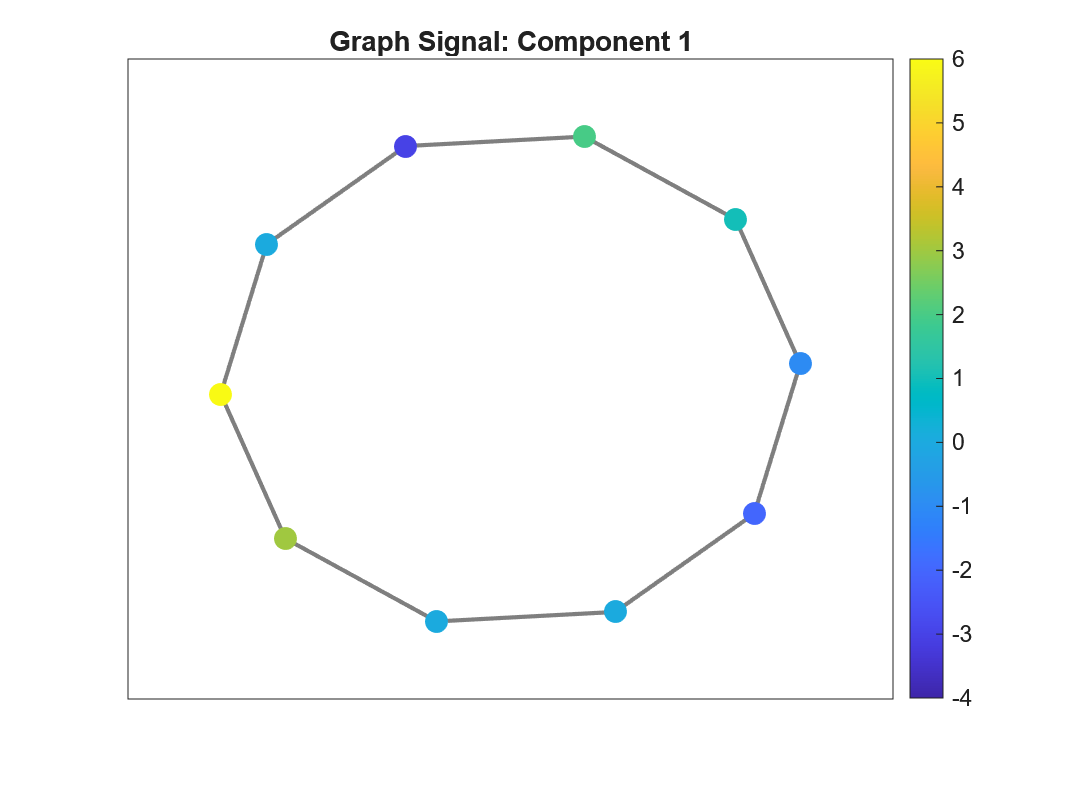}
    	\includegraphics[width=0.3\linewidth]{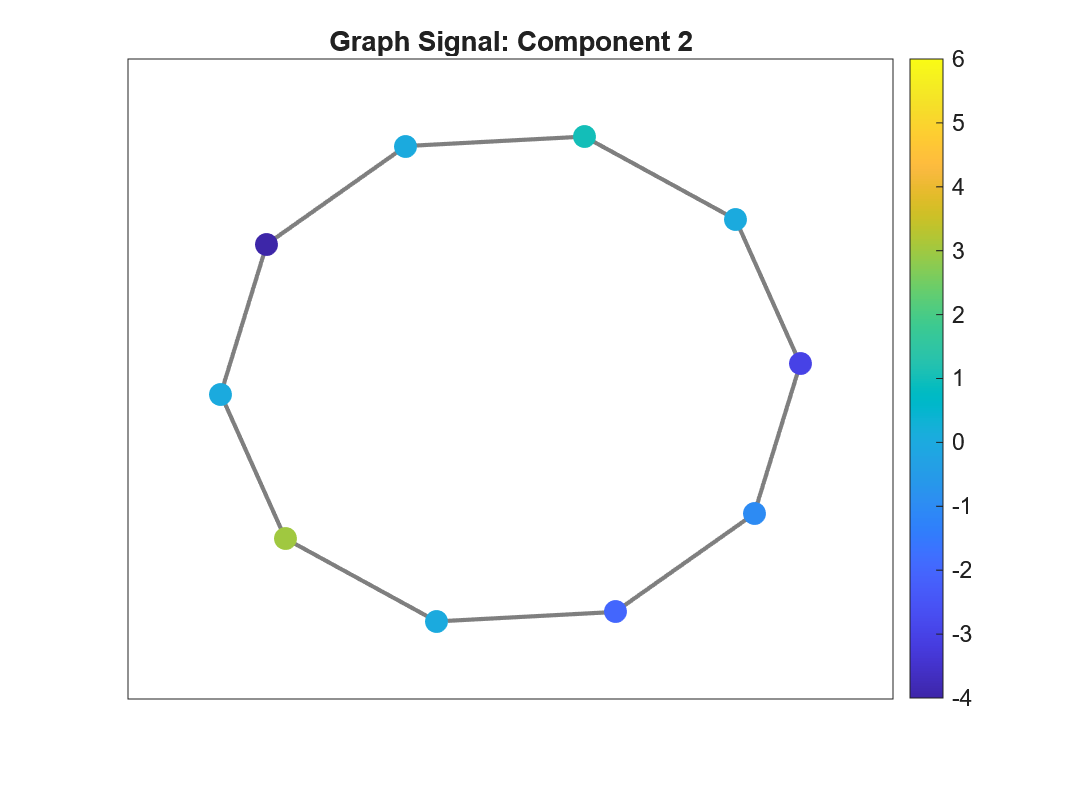}
    	\includegraphics[width=0.3\linewidth]{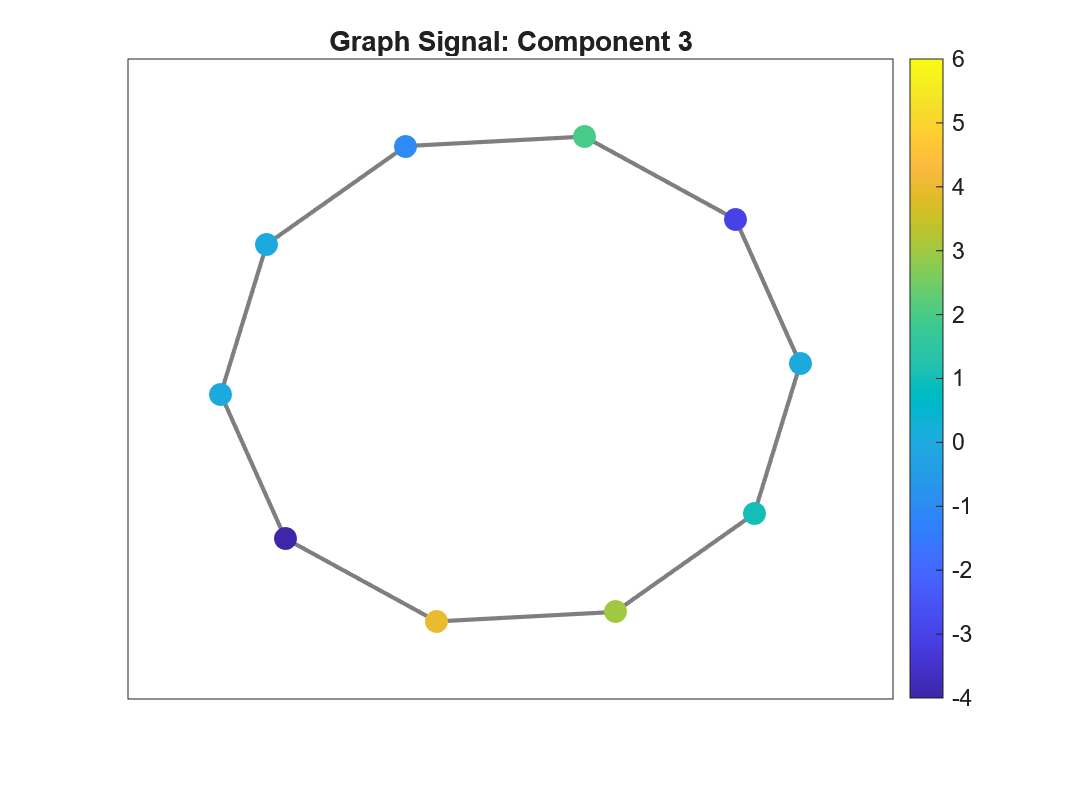} \\
    	\vspace{0.3cm}
    	\includegraphics[width=0.3\linewidth]{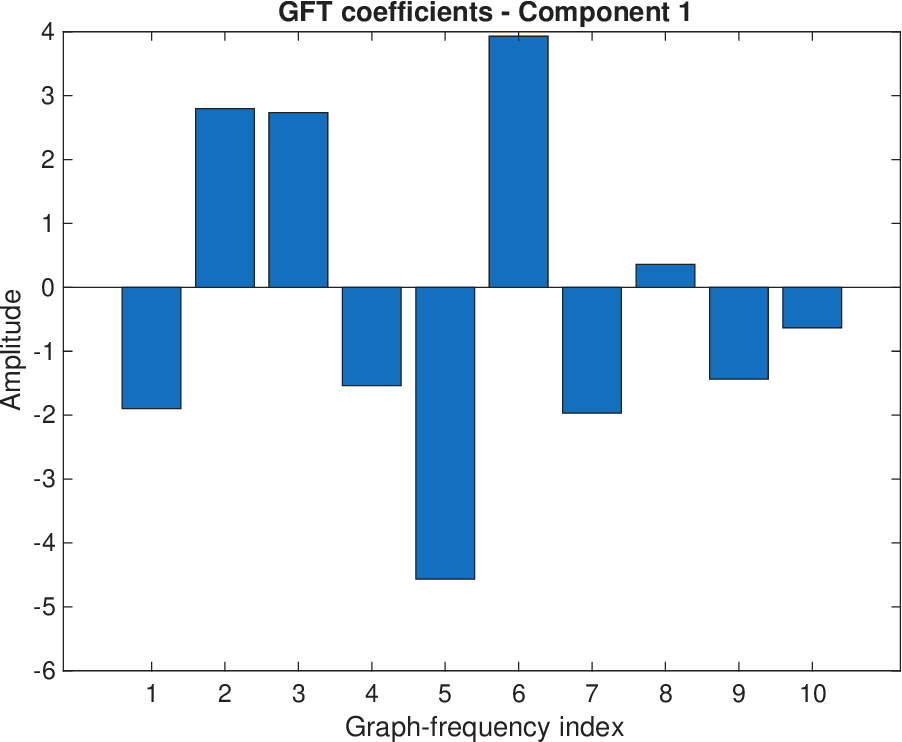}
    	\includegraphics[width=0.3\linewidth]{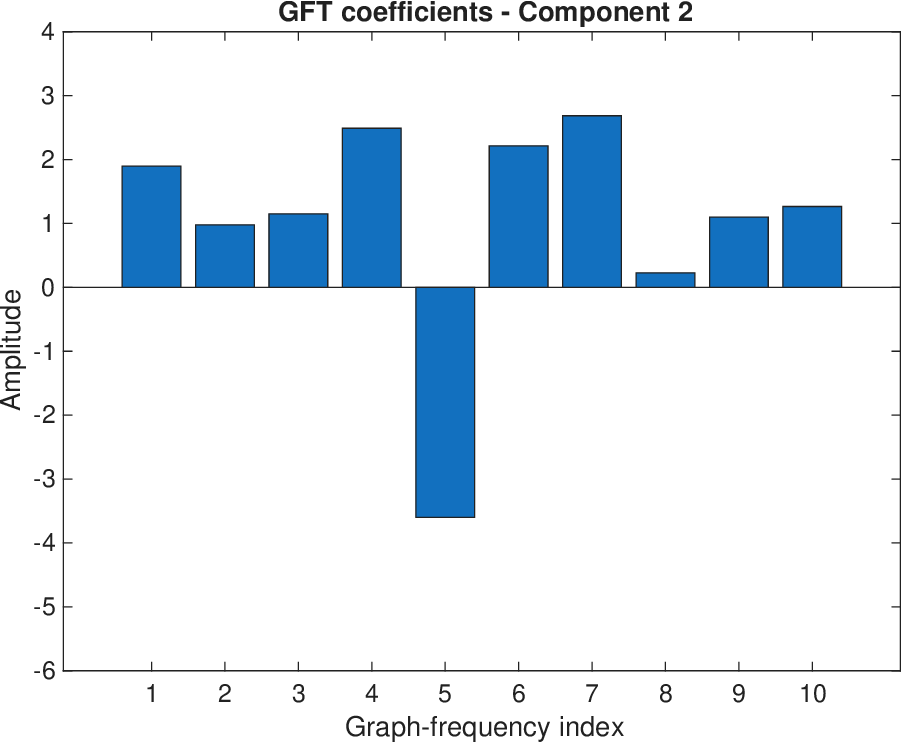}
    	\includegraphics[width=0.3\linewidth]{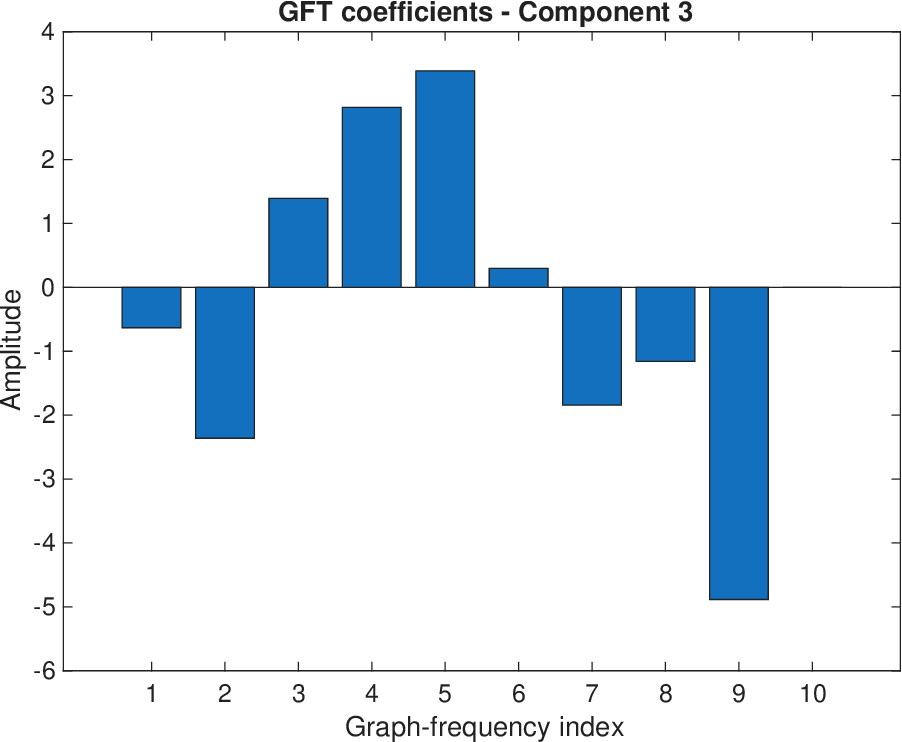} \\
    	\vspace{0.3cm}
    	\includegraphics[width=0.45\linewidth]{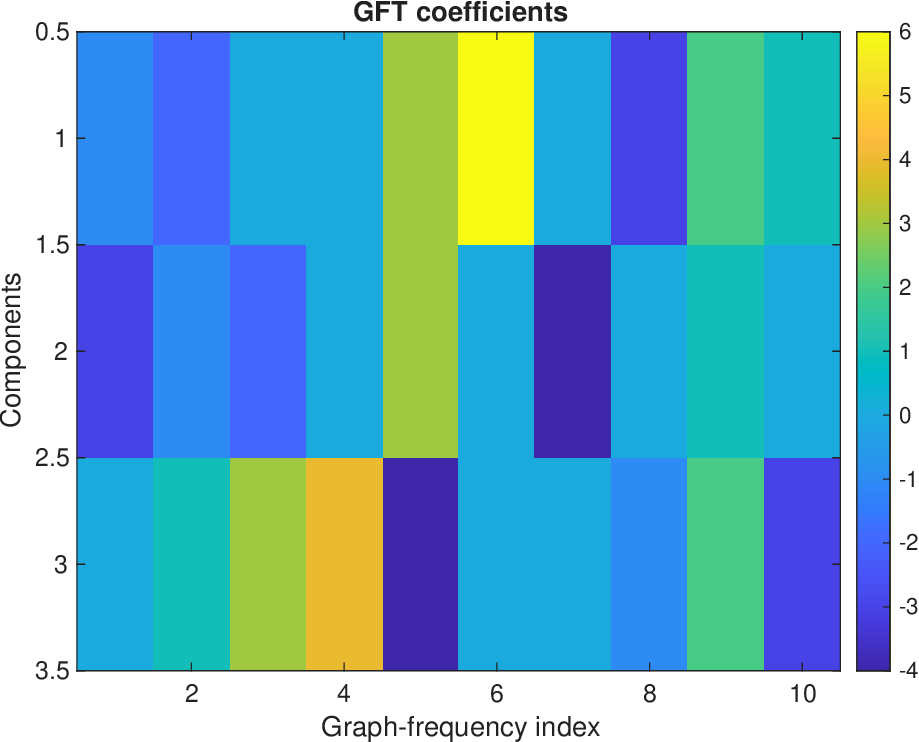}
    	\caption{Vector-valued signal on an undirected circular graph with $10$ vertices taking values into $\mathbb{R}^3$}
    	\label{Signals with 3 components}
    \end{figure}
    
    \textbf{Example:} Let $X=\mathcal{C}([0,100])\times\mathcal{C}([0,100])$ be the Banach space of continuous functions defined on the interval $[0,100]$, equipped with the $\infty$-norm, and consider the (undirected) circular graph with $10$ vertices. Figure \ref{Signals with functional components} displays the graph Fourier transform of the signal $f:V\to X$, whose components at each vertex $n$ are defined as:
    \begin{equation*}
        f_1(t,n)=\sin(t+0.3n), \quad f_2(t,n)=\cos(t+0.2n),
    \end{equation*}
    for $t\in[0,100]$. The functions are sampled at consecutive time instants with a step of $4\pi$ for visualization.
    \begin{figure}[ht]
    	\centering
    	\includegraphics[width=0.45\linewidth]{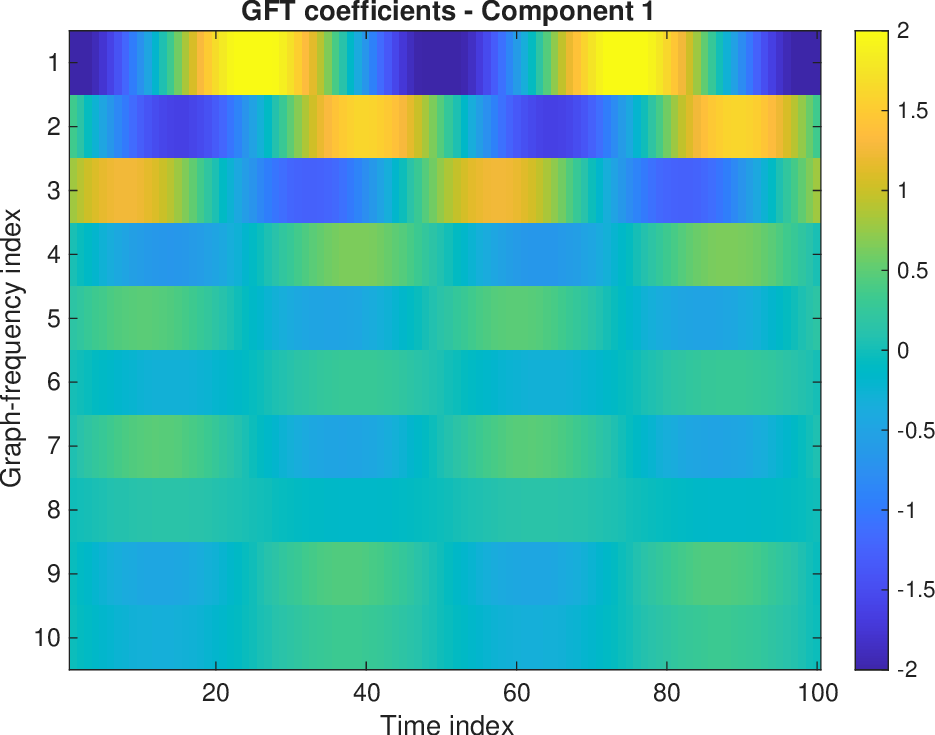}
    	\includegraphics[width=0.45\linewidth]{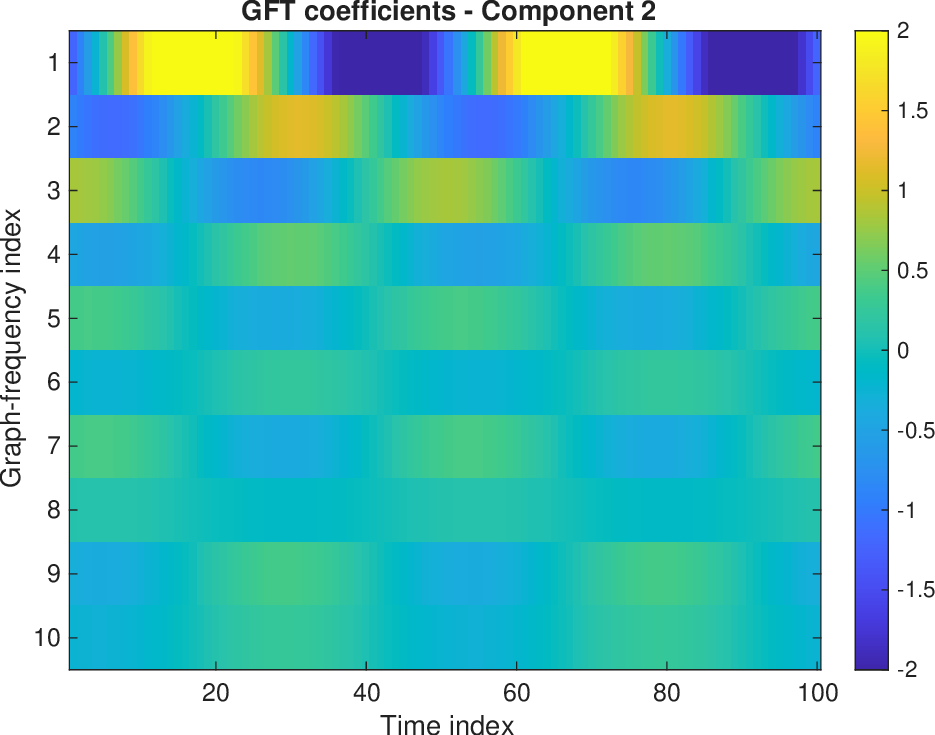}
    	\caption{Vector-valued signal on an undirected circular graph with $10$ vertices taking values into $\mathcal{C}({[0,100]})\times\mathcal{C}({[0,100]})$}
    	\label{Signals with functional components}
    \end{figure}
    
    \textbf{Example:} We aim to examine how the operator norms presented in Table \ref{Fourier_norms} vary depending on the choice of the orthonormal basis $\mathcal{B}$. To illustrate this, consider the (undirected) path graph with $N=4$ vertices, displayed in Figure \ref{Examples of graphs}. Its adjacency matrix, degree matrix, Laplacian matrix and normalized Laplacian matrix are given by:
    \begin{alignat*}{4}
        A&=
        \begin{pmatrix}
			0 & 1 & 0 & 0 \\
			1 & 0 & 1 & 0 \\
            0 & 1 & 0 & 1 \\
            0 & 0 & 1 & 0 \\
		\end{pmatrix}, \qquad
        &&D=
        \begin{pmatrix}
			1 & 0 & 0 & 0 \\
			0 & 2 & 0 & 0 \\
            0 & 0 & 2 & 0 \\
            0 & 0 & 0 & 1 \\
		\end{pmatrix}, \\
        L&=
        \begin{pmatrix*}[r]
			1 & -1 & 0 & 0 \\
			-1 & 2 & -1 & 0 \\
            0 & -1 & 2 & -1 \\
            0 & 0 & -1 & 1 \\
		\end{pmatrix*}, \qquad
        &&\mathcal{L}=
        \begin{pmatrix*}[r]
			1 & -\frac{1}{\sqrt{2}} & 0 & 0 \\
			-\frac{1}{\sqrt{2}} & 1 & -\frac{1}{2} & 0 \\
            0 & -\frac{1}{2} & 1 & -\frac{1}{\sqrt{2}} \\
            0 & 0 & -\frac{1}{\sqrt{2}} & 1 \\
		\end{pmatrix*}.
    \end{alignat*}
    Calculating the spectra of $A$, $L$ and $\mathcal{L}$, we obtain the following values (rounded to four decimal places):
    \begin{align*}
        \sigma(A)&=\lbrace-1.6180,-0.6180,0.6180,1.6180\rbrace, \\
        \sigma(L)&=\lbrace0,0.5858,2,3.4142\rbrace, \\
        \sigma(\mathcal{L})&=\lbrace0,0.5,1.5,2\rbrace.
    \end{align*}
    The associated unitary matrices, whose entries are rounded to four decimal places and whose columns are the normalized eigenvectors ordered according to the spectra above, are given by:
    \begin{align*}
        U_A&=
        \begin{pmatrix*}[r]
			0.3717 & -0.6015 & -0.6015 & 0.3717 \\
			-0.6015 & 0.3717 & -0.3717 & 0.6015 \\
            0.6015 & 0.3717 & 0.3717 & 0.6015 \\
            -0.3717 & -0.6015 & 0.6015 & 0.3717 \\
		\end{pmatrix*}, \\[2ex]
        U_L&=
        \begin{pmatrix*}[r]
			-0.5000 & 0.6533 & 0.5000 & -0.2706 \\
			-0.5000 & 0.2706 & -0.5000 & 0.6533 \\
            -0.5000 & -0.2706 & -0.5000 & -0.6533 \\
            -0.5000 & -0.6533 & 0.5000 & 0.2706 \\
		\end{pmatrix*}, \\[2ex]
        U_\mathcal{L}&=
        \begin{pmatrix*}[r]
			0.4082 & -0.5774 & 0.5774 & 0.4082 \\
			0.5774 & -0.4082 & -0.4082 & -0.5774 \\
            0.5774 & 0.4082 & -0.4082 & 0.5774 \\
            0.4082 & 0.5774 & 0.5774 & -0.4082 \\
		\end{pmatrix*}.
    \end{align*}
    For the sake of comparison, we also consider the unitary matrix given by normalized discrete Fourier transform:
    \begin{equation*}
        U_F=
        \begin{pmatrix*}[r]
			0.5000 & 0.5000 & 0.5000 & 0.5000 \\
			0.5000 & -0.5000i & -0.5000 & 0.5000i \\
            0.5000 & -0.5000 & 0.5000 & -0.5000 \\
            0.5000 & 0.5000i & -0.5000 & -0.5000i \\
		\end{pmatrix*}.
    \end{equation*}
    A comparison of the norms for the four different orthonormal bases is summarized in Table \ref{Fourier_norms_comparison}.
    \begin{table}[ht]
        \centering
        \caption{Operator norms $\kappa_p(U)$ for different values of $p$}
        \label{Fourier_norms_comparison}
        \renewcommand{\arraystretch}{1.5} 
        \begin{tabular}{cccccccc} 
            \toprule
            Operator & $\kappa_1(U)$  & $\kappa_{1.5}(U)$ & $\kappa_2(U)$ & $\kappa_3(U)$ & $\kappa_4(U)$ & $\kappa_{20}(U)$ & $\kappa_\infty(U)$ \\ \midrule
            $U_A$           & $1.9464$ & $1.3854$ & $1.0000$ & $0.8133$ & $0.7401$ & $0.6441$ & $0.6015$ \\
            $U_L$           & $2.0000$ & $1.5874$ & $1.0000$ & $0.8422$ & $0.7826$ & $0.6946$ & $0.6533$ \\
            $U_\mathcal{L}$ & $1.9712$ & $1.4081$ & $1.0000$ & $0.8047$ & $0.7260$ & $0.6139$ & $0.5774$ \\
            $U_F$           & $2.0000$ & $1.5874$ & $1.0000$ & $0.7937$ & $0.7071$ & $0.5359$ & $0.5000$ \\
            \bottomrule
            \end{tabular}
    \end{table}
    
    The results in Table \ref{Fourier_norms_comparison} illustrate that, while the operator norm is unitary for $p=2$, the choice of the orthonormal basis significantly influences the transform's behavior in other Banach spaces. Notably, the normalized Laplacian basis exhibits a closer alignment with the classical Fourier basis, in terms of its asymptotic decay towards $\kappa(U)$, whereas the adjacency-based transform maintains higher values across the mid-range $p$-norms.

    Furthermore, from a practical standpoint, lower values of $\kappa_p(U)$ are preferable for the operator norm estimates in Table \ref{Fourier_norms}, since they imply a more stable and bounded transformation. Conversely, higher values of $\kappa_p(U)$ enhance the lower bounds in the uncertainty principle (Theorems \ref{Uncertainty principle 1} and \ref{Uncertainty principle 2}), suggesting that graphs with higher spectral coherence offer stronger guarantees against simultaneous localization in both the vertex and spectral domains.

    This suggests that the selection of the operator must be carefully tuned to the specific geometric constraints of the Banach space in which the signal takes values and the desired balance between transform stability and uncertainty bounds.

    \section{Conclusion}
    This work established a generalized framework for signals defined on finite graphs and for standard graph-based data by introducing vector-valued graph signals. By defining the corresponding graph Fourier transform through an arbitrary orthonormal basis, we demonstrated a versatile approach to vertex-frequency analysis. Our derivation of operator norm estimates, and the fundamental properties of modulation and translation provide a rigorous mathematical foundation for this extension. Numerical simulations validated the theoretical results, particularly highlighting the impact of the $p$-coherence of the basis on signal sparsity. This general framework provides a flexible and mathematically grounded environment for graph signal processing, with significant potential for applications in time-series analysis and the study of time-varying signals on both networks and irregular domains.

    \section*{Acknowledgments}
    The author has been supported by the Gruppo Nazionale per l’Analisi Matematica, la Probabilità e le loro Applicazioni (GNAMPA) of the Istituto Nazionale di Alta Matematica (INdAM). He also acknowledges the support of Professor Iulia Martina Bulai for supervising his work and for her essential guidance.
    \printbibliography
	
\end{document}